\definecolor{vert}{rgb}{0,0.6,0}
\theoremstyle{plain}
\newtheorem{thm}{Theorem}[section]
\newtheorem{defn}{Definition}
\newtheorem{quest}{Question}
\newtheorem{lem}[thm]{Lemma}
\newtheorem{cor}[thm]{Corollary}
\newtheorem{conj}{Conjecture}
\theoremstyle{remark}
\newtheorem{rem}{\bf{Remark}}
\numberwithin{equation}{section}
\newcommand{\N}{\mathbb{N}}
\newcommand{\bP}{\mathbb{P}}
\newcommand{\R}{\mathbb{R}}
\newcommand{\T}{\mathbb{T}}
\newcommand{\Z}{\mathbb{Z}}
\newcommand{\cF}{\mathcal{F}}
\newcommand{\BUC}{{\rm BUC\,}}
\newcommand{\del}{\delta}
\newcommand{\ep}{\varepsilon}
\newcommand{\lam}{\lambda}
\newcommand{\sig}{\sigma}
\newcommand{\om}{\omega}
\newcommand{\Om}{\Omega}
\newcommand{\ol}{\overline}
\begin{document}

\title[Min-max formulas  and other properties of $\ol{H}$]{Min-max formulas  and other properties of certain classes of nonconvex effective Hamiltonians}

\author{Jianliang Qian}
\address[Jianliang Qian]
{
Department of Mathematics and Department of Computational Mathematics, Science and Engineering, 
Michigan State University, East Lansing, MI 48824 , USA}
\email{qian@math.msu.edu}

\author{Hung V. Tran}
\address[Hung V. Tran]
{
Department of Mathematics, 
University of Wisconsin Madison, Van Vleck hall, 480 Lincoln drive, Madison, WI 53706, USA}
\email{hung@math.wisc.edu}

\author{Yifeng Yu}
\address[Yifeng Yu]
{
Department of Mathematics, 
University of California, Irvine, 410G Rowland Hall, Irvine, CA 92697, USA}
\email{yyu1@math.uci.edu}

\thanks{
The  work of JQ is partially supported by NSF grants 1522249 and 1614566,
the work of HT is partially supported by NSF grant DMS-1615944,
the work of YY is partially supported by NSF CAREER award \#1151919.
}

\date{}

\keywords{Cell problems; nonconvex Hamilton-Jacobi equations; effective Hamiltonians; evenness; min-max formulas; quasi-convexification; periodic homogenization; stochastic homogenization; viscosity solutions}
\subjclass[2010]{
35B10 
35B20 
35B27 
35D40 
35F21 
}

\maketitle 

\begin{abstract}
This paper is the first attempt to systematically study properties of the effective Hamiltonian $\ol{H}$  arising in the  periodic homogenization of some  coercive but nonconvex Hamilton-Jacobi equations.  Firstly,  we introduce a new and robust decomposition method to obtain min-max formulas for a class of nonconvex $\ol{H}$.  
Secondly,  we   analytically and numerically investigate other related interesting  phenomena, such as ``quasi-convexification" and breakdown of symmetry, of $\ol{H}$ from other typical nonconvex Hamiltonians.  
Finally,  in the appendix, we show that our new method and those a priori formulas from the periodic setting can be used to obtain stochastic homogenization  for same class of nonconvex Hamilton-Jacobi equations.  Some conjectures and problems are also proposed. 
\end{abstract}

\section{Introduction}

\subsection{Overview}
Let us describe the periodic homogenization theory of Hamilton-Jacobi equations.
For each $\ep>0$, let $u^\ep \in C(\R^n \times [0,\infty))$ be the viscosity solution to
\begin{equation}\label{HJ-ep}
\begin{cases}
u^\ep_t + H(Du^\ep) - V\left(\frac{x}{\ep}\right)=0 \quad &\text{ in } \R^n \times (0,\infty),\\
u^\ep(x,0)=g(x) \quad &\text{ on } \R^n.
\end{cases}
\end{equation}
Here, the Hamiltonian $H(p) -V(x)$ is of separable form with $H \in C(\R^n)$, which is coercive (i.e., $\lim_{|p| \to \infty} H(p)=+\infty$),
and $V \in C(\R^n)$, which is $\Z^n$-periodic.
The initial data $g\in \BUC(\R^n)$, the set of bounded, uniformly continuous functions on $\R^n$.

It was proven by Lions, Papanicolaou and Varadhan \cite{LPV} that $u^\ep$ converges to $u$ locally uniformly on $\R^n \times [0,\infty)$ as $\ep \to 0$,
and $u$ solves the effective equation
\begin{equation}\label{HJ-hom}
\begin{cases}
u_t +\ol{H}(Du)=0 \quad &\text{ in } \R^n \times (0,\infty),\\
u(x,0)=g(x) \quad &\text{ on } \R^n.
\end{cases}
\end{equation}
The effective Hamiltonian $\ol{H} \in C(\R^n)$ is determined in a nonlinear way by $H$ and $V$ through the cell problems as following.  
For each $p \in \R^n$, it was shown in \cite{LPV} that there exists a unique constant $\ol H(p)\in \R$ such that the following cell problem has a continuous viscosity solution
\begin{equation} \label{E-p}
H(p+Dv) - V(x) = \ol H(p) \quad \text{ in } \T^n,
\end{equation}
where $\T^n$ is the $n$-dimensional flat torus $\R^n/\Z^n$.

Although there is a vast literature on homogenization of Hamilton-Jacobi equations in different settings after \cite{LPV},  characterizing the shape of $\ol H$ remains largely open even in  basic situations. 
Let us summarize quickly what is known in the literature about $\ol{H}$.
It is not hard to see that $\ol{H}$ is coercive thanks to the coercivity of $H$.
If one assumes furthermore that $H$ is convex, then $\ol{H}$ is also convex and the graph of $\ol{H}$ can contain some flat parts (i.e., $\left\{\ol H=\min \ol H\right\}$ has interior points). See \cite{LPV} and the works of Concordel \cite{Con1,Con2}. Furthermore, in this convex setting, we have the following representation formula, thanks to the results of 
Contreras, Iturriaga, Paternain and Paternain \cite{CIPP}, and Gomes \cite{Gom},
\begin{equation}\label{convex-min-max}
\ol{H}(p) = \inf_{\phi \in C^1(\T^n)} \max_{x\in \T^n} \left ( H(p+D\phi(x))- V(x) \right).
\end{equation}
Note that  the above representative formula still holds if $H$ is quasiconvex (level-set convex),
in which case $\ol{H}$ is also quasiconvex.  
More interestingly,  in case $n=2$, $H(p)=|p|^2$ and $V\in C^{\infty}(\Bbb T^2)$,  
a deep result of Bangert \cite{B2} says that the level curve $\left\{\ol H=c\right\}$ for every $c>-\min V$ must contain line segments (i.e., not strictly convex) unless $V$ is a constant function.   Bangert's result relies on detailed information about the structure of Aubry-Mather sets in two dimension (\cite{B1}).  See also Jing, Tran, Yu \cite{JTY} for discussion regarding locations of  line segments of the level curves for Ma\~n\'e type Hamiltonians.  

The first numerical computation of effective Hamiltonians  is due to  Qian \cite{Qi} based on the so called {\it big-T method}, that is,  $\ol H(p)=-\lim_{t\to \infty} {w(x,t)\over t}$,
where $w(x,t)$ is the unique viscosity solution to 
\[
\begin{cases}
w_t+H(Dw)-V(x)=0 \quad &\text{ in $\R^n\times (0, \infty)$,}\\
w(x,0)=p\cdot x  \quad &\text{ on $\R^n$}.
\end{cases}
\]
For other numerical schemes,  we refer to Gomes, Oberman \cite{GO}, 
Falcone, Rorro \cite{FR}, 
Achdou, Camilli, Capuzzo-Dolcetta \cite{ACC},
Oberman, Takei, Vladimirsky \cite{OTV}, Luo, Yu, Zhao \cite{LYZ}
and the references therein.

It is worth mentioning that cell problem \eqref{E-p} and representation formula \eqref{convex-min-max} 
appear also in weak KAM theory (see E \cite{E}, Evans, Gomes \cite{EvG},
Fathi \cite{Fa} for the convex case, and Cagnetti, Gomes, Tran \cite{CGT} for the nonconvex case).  
In fact,  a central goal of the weak KAM theory is to find information of underlying dynamical system encoded in the effective Hamiltonian.

In the case where $H$ is nonconvex, to identify   the shape of $\ol{H}$ is highly nontrivial even in  the one dimensional space. 
This was settled only very recently by  Armstrong, Tran, Yu \cite{ATY2}, and Gao \cite{Gao}.  
One fundamental feature obtained is the  ``{\it quasi-convexification}" phenomenon, that is, 
the effective Hamiltonian $\ol H$ becomes quasiconvex (level-set convex)   when the oscillation of $V$ is large enough. 
See Section \ref{sec:quasi} for more precise statements. 
In multi-dimensional spaces,
 Armstrong, Tran, Yu \cite{ATY1} obtained a qualitative shape of $\ol{H}$ for a representative case where $H(p)=(|p|^2-1)^2$.
Other than \cite{ATY1}, very little is known about finer properties of nonconvex $\ol{H}$ in multi-dimensional spaces, partly due to the extreme complexity of dynamics associated with nonconvex Hamiltonians.  Furthermore, as far as the authors know, there is no numerical study of $\ol{H}$ in this case.

Let us also call attention to an extrinsic way to study $\ol{H}$ via inverse problems. 
See Luo, Tran, Yu \cite{LTY}.

\subsection{Main results}
Reducing a complex quantity to relatively simpler objects is a very natural and common idea in  mathematics.  
For a class of nonconvex Hamiltonians $H$,  we introduce a new decomposition method to obtain min-max type representation formulas for $\ol{H}$.  
These formulas  consist of   effective Hamiltonians of quasiconvex Hamiltonians which are presumably less challenging  to analyze.  
The most general statement is given by inductive formulas (Theorem \ref{Maintheorem}).  
Two specific (but important) cases of Theorem \ref{Maintheorem}  are provided in Theorem  \ref{thm:rep1} and Lemma \ref{thm:rep2}.   
One immediate corollary is the evenness of $\ol H$ associated with a certain class of radially symmetric Hamiltonians, which is otherwise not obvious at all.  Given the vast  variety of  nonconvex functions,    our surgical approach is only a preliminary step toward understanding the shape of nonconvex $\ol H$.  In Section 2.4, we present some natural obstacles to decomposing a nonconvex $\ol H$. In particular, there is a connection between ``non-decomposability" and loss of evenness.

As another interesting application,  the method and the representation formulas are robust enough that 
we are also able to prove stochastic homogenization for the same class of nonconvex $H$ in the appendix. 
For instance,   Theorem \ref{thm:random}   includes the result in \cite{ATY1} as a special case with a much shorter proof.  
The detailed discussion on this (including a brief overview of stochastic homogenization) is left to the appendix.
We would like to point out  that a priori identification of shape of $\ol H$ is currently  the only  available way to  tackle  homogenization of  nonconvex Hamilton-Jacobi equations  in general stationary ergodic setting. 

In Section 3,  we  provide various numerical computations of $\ol{H}$ in multi-dimensional spaces for general radially symmetric Hamiltonians and a double-well type Hamiltonian.  
These provide insights on how the changes of potential energy $V$ affect the changes in shape of effective Hamiltonian $\ol{H}$.  The important  ``quasi-convexification" phenomenon is observed  in multi-dimensional cases as well.  Nevertheless,  verifying  it rigorously seems to be quite challenging.  Interesting connections between decomposition,  loss of evenness and quasi-convexification are demonstrated  in  Section 2.4 and  Remark \ref{rem:conj}.  Several open problems are provided based on the numerical  evidences we have in this section.

\section{Min-max formulas} \label{sec:min-max}
\subsection{Basic case}
The setting is this. Let $H=H(p): \R^n \to \R$ be a continuous, coercive Hamiltonian such that
\begin{itemize}
\item[(H1)] $\min_{\R^n} H=0$ and there exists a bounded domain $U \subset \R^n$ such that 
\[
\{H=0\}=\partial U.
\]
\item[(H2)] $H(p)=H(-p)$ for all $p\in \R^n$.
\item[(H3)] There exist $H_1, H_2: \R^n \to \R$ such that $H_1, H_2$ are continuous and 
\[
H=\max\{H_1,H_2\}.
\]
Here, $H_1$ is coercive, quasiconvex, even ($H_1(p)=H_1(-p)$ for all $p\in \R^n$), $H_1=H$ in $\R^n \setminus U$
and $H_1 <0$ in $U$.
The function $H_2$ is quasiconcave, $H_2=H$ in $U$, $H_2<0$ in $\R^n \setminus U$ and $\lim_{|p| \to \infty} H_2(p)=-\infty$.
\end{itemize}
It is easy to see that  any  $H$ satisfying (H1)--(H3)  can be written as $H(p)=|F(p)|$ for some  even, coercive quasiconvex function $F$ such that $\min_{\R^n} F<0$. Below is the first decomposition result. 

\begin{thm}\label{thm:rep1}
Let $H \in C(\R^n)$ be a Hamiltonian satisfying {\rm (H1)--(H3)}.
Let $V \in C(\T^n)$ be a potential energy with $\min_{\T^n} V=0$.

Assume that $\ol{H}$ is the effective Hamiltonian corresponding to $H(p)-V(x)$.
Assume also that $\ol{H}_i$ is the effective Hamiltonian corresponding to $H_i(p)- V(x)$ for $i=1,2$.
Then
\[
\ol{H} = \max\{\ol{H}_1, \ol{H}_2,0\}.
\]
In particular,  $\ol H$ is even. 
\end{thm}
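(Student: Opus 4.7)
The plan is to prove $\ol H = \max\{\ol H_1, \ol H_2, 0\}$ by matching upper and lower bounds, and to read off evenness from the formula.

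For the lower bound $\ol H \geq \max\{\ol H_1, \ol H_2, 0\}$: the inequalities $\ol H \geq \ol H_i$ ($i = 1, 2$) follow from the monotonicity of the effective Hamiltonian with respect to the underlying Hamiltonian, since $H \geq H_i$ pointwise by (H3); this is a standard consequence of the comparison principle applied to the $\lam$-discount approximation of \eqref{E-p}. The bound $\ol H \geq 0$ uses $H \geq 0$ on $\R^n$ (which follows from (H1) together with (H3)) and $\min_{\T^n} V = 0$: any viscosity subsolution of $H(p+Dv) - V \leq c$, tested by a smooth $\phi$ touching from above at a point $x_0$ where $V(x_0) = 0$, gives $0 \leq H(p + D\phi(x_0)) \leq c$, so $c \geq 0$.

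The reverse bound $\ol H \leq \mu := \max\{\ol H_1, \ol H_2, 0\}$ is the heart of the argument. I would use the characterization $\ol H(p) = \inf\{c : \text{there is a Lipschitz viscosity subsolution of } H(p + Dv) \leq V + c\}$, which follows from the $\lam$-discount approximation and comparison and is valid for general continuous coercive $H$. Since $H = \max\{H_1, H_2\}$, a viscosity $H$-subsolution is exactly a simultaneous viscosity subsolution of $H_1$ and of $H_2$. The construction of such a subsolution at level $\mu + \eta$ (for any $\eta > 0$) passes through the auxiliary quasiconvex, coercive, even Hamiltonian $\hat H_1 := \max\{H_1, 0\}$. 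Monotonicity together with the observation that $V \geq 0$ turns an $H_1$-subsolution at level $\ol H_1$ into a $\hat H_1$-subsolution at level $\max\{\ol H_1, 0\}$, yielding $\ol{\hat H_1} = \max\{\ol H_1, 0\} \leq \mu$. Let $\hat v_1$ be a continuous viscosity solution of $\hat H_1(p + D\hat v_1) - V = \max\{\ol H_1, 0\}$; at any $x$ where the right side is strictly positive, the supersolution condition forces subgradients of $\hat v_1$ outside $\cU$, because $q \in U$ gives $\hat H_1(q) = 0$. At such points $H_2(q) \leq 0 \leq V + \mu + \eta$, so the $H_2$-subsolution inequality comes for free. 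The symmetric case $\mu = \ol H_2$ is handled by the dual max-min formula
\[
\ol H_2(p) = \sup_{\phi \in C^1(\T^n)} \min_{x \in \T^n} \left( H_2(p + D\phi(x)) - V(x) \right),
\]
obtained by applying \eqref{convex-min-max} to the quasiconvex coercive Hamiltonian $-H_2$ with potential $-V$ and using $\ol H_2 = -\ol{(-H_2)}$. Case-splitting on which of the three candidates for $\mu$ is attained assembles the desired subsolution.

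Evenness of $\ol H$ is then immediate from the formula once $\ol H_i$ are both even. The function $\ol H_1$ is even because $H_1$ is even by (H3) and the cell problem is invariant under $p \mapsto -p$, $v \mapsto -v$ for any Hamiltonian even in its gradient variable. One can take $H_2$ even as well, for instance via the decomposition $H = |F|$ with an even quasiconvex $F$ (noted just before the theorem statement), in which case the natural choices $H_1 = F$ and $H_2 = -F$ are both even.

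The main obstacle I anticipate is the viscosity-sense upgrade in the upper bound: at a test function $\phi$ touching $\hat v_1$ from above at $x_0$ with $p + D\phi(x_0) \in U$, the $\hat H_1$-subsolution inequality is trivially satisfied by $\hat H_1(p + D\phi(x_0)) = 0$, but $H_2(p + D\phi(x_0))$ may exceed $V(x_0) + \mu + \eta$, so the $H_2$-subsolution condition is not automatic. Resolving this will likely require a localized modification of $\hat v_1$ or a Perron-type combination with a subsolution of $H_2$ on the ``bad'' region, exploiting $\ol H_2 \leq \mu$ and the precise geometry of the superlevel sets of $H_2$ inside $U$.
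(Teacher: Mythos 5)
Your lower bound is fine and matches the paper, but the upper bound is where the theorem actually lives, and the obstacle you flag in your last paragraph is the crux, not a technicality to be patched later. The paper does not fix it by a localized modification or Perron combination of $\hat v_1$; the missing ingredient is a Barron--Jensen type \emph{bilateral} property used together with the evenness of $H_1$. Concretely, in the case $\ol{H}_1(p)\geq\max\{\ol{H}_2(p),0\}$ one takes the cell solution $v(\cdot,-p)$ of the quasiconvex $H_1$ at the \emph{reflected} momentum $-p$ and sets $w=-v(\cdot,-p)$; then each $q\in D^+w(x)$ corresponds to $-q\in D^-v(x,-p)$, and for quasiconvex Hamiltonians the cell equation holds with \emph{equality} on the subdifferential, so $H_1(p+q)=\ol{H}_1(p)+V(x)\geq 0$, which forces $p+q\notin U$ and hence $H(p+q)=H_1(p+q)$. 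This is exactly what guarantees that no superdifferential gradient falls into $U$ --- the point where your construction with $\hat H_1=\max\{H_1,0\}$ stalls, since a supersolution statement about $D^-\hat v_1$ says nothing about $D^+\hat v_1$. Note that evenness of $H_1$ (and of $\ol{H}_1$, via the inf-max formula) enters here in an essential way, not only at the end to read off evenness of $\ol{H}$. Your treatment of the case $\mu=\ol{H}_2(p)$ also does not close: the sup-min formula produces smooth $\phi$ with a \emph{lower} bound on $H_2(p+D\phi)-V$, whereas a subsolution of $H$ needs an upper bound; what works is again the equality-on-superdifferentials property of the quasiconcave cell solution $v(\cdot,p)$, which forces $H_2(p+q)\geq V\geq 0$, hence $p+q\in\cU$ where $H=H_2$.

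Separately, your case-splitting silently skips the genuinely delicate case $\max\{\ol{H}_1(p),\ol{H}_2(p)\}<0$, where one must prove $\ol{H}(p)=0$. There no cell solution of $H_1$ or $H_2$ directly yields a level-zero subsolution of $H$, and the $\hat H_1$ device suffers from the same superdifferential problem as above. The paper needs a separate ``patching'' argument: replace $V$ by $\sigma V$, use continuity of $\sigma\mapsto\ol{H}^\sigma_1(p)$ (for $p\notin\cU$; symmetrically $\ol{H}^\sigma_2$ for $p\in U$) and the intermediate value theorem to find $s\in(0,1)$ with $\max\{\ol{H}^s_1(p),\ol{H}^s_2(p)\}=0$, conclude $\ol{H}^s(p)=0$ from the previous steps, and finish with the monotonicity $0\leq\ol{H}\leq\ol{H}^s$. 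Without this step, or a substitute for it, the formula remains unproved precisely on the region where both $\ol{H}_1$ and $\ol{H}_2$ are negative, so as it stands the proposal has genuine gaps in all three parts of the upper bound.
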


We would like to point out that the evenness of $ \ol{H}$ will be used later and is not  obvious at all  although $H$ is  even.  See the discussion in Subsection \ref{subsec:even} for this subtle issue.

\begin{proof}
We proceed in few steps.
\smallskip

\noindent {\bf Step 1.} It is straightforward that $0 \leq \ol{H}(p) \leq H(p)$ for all $p \in \R^n$. In particular, 
\begin{equation}\label{basic-1}
\ol{H}(p)=0 \quad \text{ for all } p \in \partial U.
\end{equation}
Besides, as $H_i \leq H$, we get $\ol{H}_i \leq \ol{H}$. Therefore, 
\begin{equation}\label{basic-2}
\ol{H} \geq \max\left\{\ol{H}_1, \ol{H}_2,0\right\}.
\end{equation}
It remains to prove the reverse inequality of \eqref{basic-2} in order to get the conclusion.
\smallskip

\noindent {\bf Step 2.} Fix $p\in \R^n$. 
Assume now that $\ol{H}_1(p) \geq \max\{\ol{H}_2(p),   0\}$.
We will show that $\ol{H}_1(p) \geq \ol{H}(p)$.

 Since $H_1$ is quasiconvex and even, we use the inf-max representation formula for $\ol{H}_1$
 (see \cite{AS3,DaSi, Na}) to get that
\begin{align*}
\ol{H}_1(p) &= \inf_{\phi \in C^1(\T^n)} \max_{x \in \T^n} \left ( H_1(p+D\phi(x)) - V(x) \right)\\
&=\inf_{\phi \in C^1(\T^n)} \max_{x \in \T^n} \left ( H_1(-p-D\phi(x)) - V(x) \right)\\
&=\inf_{\psi \in C^1(\T^n)} \max_{x \in \T^n} \left ( H_1(-p+D\psi(x)) - V(x) \right)
=\ol{H}_1(-p).
\end{align*}
Thus, $\ol{H}_1$ is even.
Let $v(x,-p)$ be a solution to the cell problem
\begin{equation}\label{basic-3}
H_1(-p+Dv(x,-p)) - V(x) = \ol{H}_1(-p)=\ol{H}_1(p) \quad \text{ in } \T^n.
\end{equation}
Let $w(x)=-v(x,-p)$. For any $x\in \T^n$ and $q \in D^+ w(x)$, we have $-q \in D^- v(x,-p)$ and hence,
in light of \eqref{basic-3} and the quasiconvexity of $H_1$ (see \cite{BJ}),
\[
\ol{H}_1(p)=H_1(-p-q) - V(x) = H_1(p+q) - V(x).
\]
We thus get $H_1(p+q) = \ol{H}_1(p) + V(x) \geq 0$, and therefore, $H(p+q) = H_1(p+q)$.
This yields that $w$ is a viscosity subsolution to
\[
H(p+Dw) - V(x) = \ol{H}_1(p) \quad \text{ in } \T^n.
\]
Hence, $\ol{H}(p) \leq \ol{H}_1(p)$.
\smallskip

\noindent {\bf Step 3.}
Assume now that $\ol{H}_2(p) \geq \max\{\ol{H}_1(p), \ 0\}$. 
By using similar arguments as those in the previous step (except that we use $v(x,p)$ instead of $v(x,-p)$ due to the quasiconcavity of $H_2$), 
we can show that $\ol{H}_2(p) \geq \ol{H}(p)$.

\smallskip

\noindent {\bf Step 4.}
Assume that $\max\left\{\ol{H}_1(p), \ol{H}_2(p)\right\}<0$. We now show that $\ol{H}(p)=0$ in this case.
Thanks to \eqref{basic-1} in Step 1, we may assume that $p \notin \partial U$.

For $\sig \in [0,1]$ and $i=1,2$, let $\ol{H}^\sig$, $\ol{H}^\sig_i$ be the effective Hamiltonians corresponding to $H(p) - \sig V(x)$, $H_i(p)-\sig V(x)$, respectively. 
It is clear that
\begin{equation}\label{s4-1}
0 \leq \ol{H}^1=\ol{H} \leq \ol{H}^\sig \quad \text{ for all } \sig \in [0,1].
\end{equation}
By repeating Steps 2 and 3 above, we get 
\begin{equation} \label{s4-2}
\text{For $p\in\R^n$ and $\sig \in [0,1]$, if $\max\left\{\ol{H}^\sig_1(p),\ol{H}^\sig_2(p)\right\}=0$, then $\ol{H}^\sig(p)=0$.}
\end{equation}
We only consider the case $p \notin \ol{U}$ here.
The case $p \in U$ is analogous. Notice that
\[
H(p)= H_1(p) = \ol{H}^0_1(p)>0 \quad \text{and} \quad \ol{H}_1(p)= \ol{H}^1_1(p) <0.
\]
By the continuity of $\sig \mapsto \ol{H}^\sig_1(p)$, there exists $s \in (0,1)$ such that $\ol{H}^s_1(p)=0$.
Note furthermore that, as $p \notin \ol{U}$, $\ol{H}_2^s(p) \leq H_2(p) <0$.
These, together with \eqref{s4-1} and \eqref{s4-2}, yield the desired result.
\end{proof}

\begin{rem}\label{rem:step4}
We emphasize that Step 4 in the above proof is important.
It plays the role of a ``patching" step, which helps glue $\ol{H}_1$
and $\ol{H}_2$ together.

It is worth noting that the representation formula in Theorem \ref{thm:rep1} still holds in case $H$ is not even in $U$.
In fact, we do not use this point at all in the proof. We only need it to deduce that $\ol{H}$ is even.
\end{rem}

Assumptions (H1)--(H3) are general and a bit complicated.
A simple situation where (H1)--(H3) hold is a radially symmetric case where $H(p)=\psi(|p|)$,
and  $\psi \in C([0,\infty), \R)$ satisfying
\begin{equation}\label{psi-con}
\begin{cases}
\psi(0)>0, \ \psi(1)=0, \  \lim_{r \to \infty} \psi(r)=+\infty,\\
\psi \text{ is strictly decreasing in $(0,1)$ and is strictly increasing in $(1,\infty)$}.
\end{cases}
\end{equation}
Let $\psi_1, \psi_2 \in C([0,\infty), \R)$ be such that
\begin{equation}\label{psi-1-2}
\begin{cases}
\psi_1=\psi  \text{ on } [1,\infty), \text{ and } \psi_1 \text{ is strictly increasing on } [0,1],\\
\psi_2=\psi \text{ on } [0,1], \ \psi_2  \text{ is strictly decreasing on } [1,\infty), \text{ and } \lim_{r \to \infty} \psi_2(r)=-\infty.
\end{cases}
\end{equation}
See Figure \ref{fig1} below.   Set $H_i(p)= \psi_i(|p|)$ for $p \in \R^n$, and for $i=1,2$.
It is clear that (H1)--(H3) hold provided that \eqref{psi-con}--\eqref{psi-1-2} hold.
\begin{figure}[h]
\begin{center}
\begin{tikzpicture}\label{fig1}

\draw[->] (-0.5,0)--(4,0);
\draw[->] (0,-2)--(0,4);
\draw (4,-0.2) node {$r$};
\draw (1,-0.2) node {$1$};

\draw[dashed, blue, ultra thick] (0,-0.5)--(1,0);
\draw[blue] (0.2,-0.7) node{$\psi_1$};

\draw[dashed, red, ultra thick] plot [smooth] coordinates {(1,0) (2,-0.7) (3,-1.8)};
\draw[red] (3.2, -1.9) node{$\psi_2$};

\draw[thick] plot [smooth] coordinates {(0,1) (1,0) (2,1) (3,4)};
\draw (3.2,4) node {$\psi$};
\end{tikzpicture}
\caption{Graphs of $\psi, \psi_1, \psi_2$}
\label{fig1}
\end{center}
\end{figure}
An immediate consequence of Theorem \ref{thm:rep1} is
\begin{cor}\label{cor:rep1}
Let $H(p)=\psi(|p|)$, $H_i(p)=\psi_i(|p|)$ for $i=1,2$ and $p \in \R^n$,
where $\psi,\psi_1,\psi_2$ satisfy \eqref{psi-con}--\eqref{psi-1-2}.
Let $V \in C(\T^n)$ be a potential energy with $\min_{\T^n} V=0$.

Assume that $\ol{H}$ is the effective Hamiltonian corresponding to $H(p)-V(x)$.
Assume also that $\ol{H}_i$ is the effective Hamiltonian corresponding to $H_i(p)- V(x)$ for $i=1,2$.
Then
\[
\ol{H} = \max\left\{\ol{H}_1, \ol{H}_2,0\right\}.
\]
\end{cor}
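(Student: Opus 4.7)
The plan is to derive the corollary directly from Theorem \ref{thm:rep1} by verifying that the radial Hamiltonians $H$, $H_1$, $H_2$ built from $\psi$, $\psi_1$, $\psi_2$ satisfy the structural assumptions (H1)--(H3) with the choice $U := B_1(0)$, the open unit ball in $\R^n$. Once this verification is in hand, the identity $\ol{H} = \max\{\ol{H}_1, \ol{H}_2, 0\}$ is exactly the conclusion of Theorem \ref{thm:rep1}, so nothing further is required.

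I would check the three assumptions in order. For (H1), the properties listed in \eqref{psi-con} give $\psi \geq 0$ with zero set $\{1\}$, so $H \geq 0$ on $\R^n$ and $\{H=0\} = \partial B_1(0) = \partial U$; the coercivity of $H$ is immediate from $\lim_{r\to\infty}\psi(r) = +\infty$. Assumption (H2) is obvious by radial symmetry. The decomposition $H = \max\{H_1, H_2\}$ reduces to a case analysis: on $\{|p| \leq 1\}$, \eqref{psi-1-2} gives $\psi_2(|p|) = \psi(|p|)$, while the monotonicity of $\psi_1$ together with $\psi_1(1)=\psi(1)=0$ yields $\psi_1(|p|) \leq 0 \leq \psi(|p|)$; on $\{|p| \geq 1\}$, the roles of $\psi_1$ and $\psi_2$ are swapped by the same argument.

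For the remaining content of (H3), the key observation is that, by \eqref{psi-1-2}, $\psi_1$ is increasing and $\psi_2$ is decreasing on $[0,\infty)$. This makes the sublevel sets of $H_1$ and the superlevel sets of $H_2$ closed balls centered at the origin, which immediately supplies quasiconvexity of $H_1$ and quasiconcavity of $H_2$. Coercivity and evenness of $H_1$, the matching conditions $H_1 = H$ on $\R^n \setminus U$ and $H_1 < 0$ in $U$, as well as $H_2 = H$ in $U$, $H_2 < 0$ on $\R^n \setminus \ol{U}$, and $\lim_{|p|\to\infty}H_2(p) = -\infty$, can all be read off directly from \eqref{psi-1-2}. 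I do not anticipate any substantive obstacle: the entire proof is a bookkeeping check of (H1)--(H3), with the only mildly nontrivial step being the observation that monotone radial functions automatically have spherical (hence convex) sub- or superlevel sets, which is what delivers the required quasiconvexity/quasiconcavity.
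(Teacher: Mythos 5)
Your proposal is correct and follows the paper's own route exactly: the paper likewise observes that with $U=B_1(0)$ the radial conditions \eqref{psi-con}--\eqref{psi-1-2} make (H1)--(H3) hold (monotone radial profiles giving the quasiconvexity of $H_1$ and quasiconcavity of $H_2$), and then invokes Theorem \ref{thm:rep1} directly. Your bookkeeping check is in fact more detailed than the paper's, which simply states the verification is clear.
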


\begin{rem}
A special case of Corollary \ref{cor:rep1} is when
\[
H(p)=\psi(|p|)= \left(|p|^2-1\right)^2 \quad \text{ for } p \in \R^n,
\]
which was studied first by Armstrong, Tran and Yu \cite{ATY1}.
The method here is much simpler and more robust than that in \cite{ATY1}.
\end{rem}

By using Corollary \ref{cor:rep1} and approximation, we get another representation formula for $\ol{H}$ which will be used later.

\begin{cor}\label{cor:rep2}
Assume that \eqref{psi-con}--\eqref{psi-1-2} hold.
Set
\[
\tilde \psi_1(r)=\max\{\psi_1,0\}=
\begin{cases}
0 \qquad &\text{ for } 0\leq r \leq 1,\\
\psi(r) \qquad &\text{ for } r>1.
\end{cases}
\]
Let $H(p)=\psi(|p|)$, $\tilde H_1(p)=\tilde \psi_1(p|)$ and $H_2(p)=\psi_2(|p|)$ for  $p \in \R^n$.
Let $V \in C(\T^n)$ be a potential energy with $\min_{\T^n} V=0$.

Assume that $\ol{H},\ol{\tilde H}_1, \ol{H}_2$ are the effective Hamiltonian corresponding to $H(p)-V(x), \tilde H_1(p)- V(x), H_2(p)-V(x)$, respectively.
Then
\[
\ol{H} = \max\left\{\ol{\tilde H}_1, \ol{H}_2\right\}.
\]
\end{cor}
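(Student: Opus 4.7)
The obstruction to invoking Corollary \ref{cor:rep1} directly for $\tilde H_1$ is that $\tilde\psi_1\equiv 0$ on $[0,1]$, so it violates the strict monotonicity required of $\psi_1$ in \eqref{psi-1-2}. My plan is therefore to approximate $\tilde\psi_1$ by admissible decompositions to which Corollary \ref{cor:rep1} does apply, and then pass to the limit using the stability of $\ol H$ under uniform perturbations of $H$.

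Concretely, for each $k\ge 1$ I would set
\[
\psi_1^{(k)}(r)=
\begin{cases}
(r-1)/k, & 0\le r\le 1,\\
\psi(r), & r>1,
\end{cases}
\]
and $H_1^{(k)}(p):=\psi_1^{(k)}(|p|)$. Each $\psi_1^{(k)}$ is strictly increasing on $[0,1]$ with $\psi_1^{(k)}(1)=0$, agrees with $\psi$ on $[1,\infty)$, and satisfies $\|\psi_1^{(k)}-\tilde\psi_1\|_{L^\infty([0,\infty))}\le 1/k$. Since the triple $(\psi,\psi_1^{(k)},\psi_2)$ meets the hypotheses \eqref{psi-con}--\eqref{psi-1-2}, Corollary \ref{cor:rep1} applies and yields, for every $k$,
\[
\ol H = \max\bigl\{\,\ol{H}_1^{(k)},\,\ol H_2,\,0\,\bigr\}.
\]

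To conclude, I would let $k\to\infty$. Because $H_1^{(k)}\to\tilde H_1$ uniformly on $\R^n$ and all the Hamiltonians involved are equi-coercive, the standard stability for cell problems (the map $H\mapsto\ol H$ is continuous under uniform convergence of coercive Hamiltonians, cf.\ \cite{LPV}) gives $\ol{H}_1^{(k)}\to\ol{\tilde H}_1$ pointwise, so
\[
\ol H = \max\bigl\{\,\ol{\tilde H}_1,\,\ol H_2,\,0\,\bigr\}.
\]
Finally, since $\tilde H_1\ge 0$ and $\min_{\T^n}V=0$, the same elementary reasoning used in Step 1 of the proof of Theorem \ref{thm:rep1}---evaluating the cell-problem equation at points of differentiability of the corrector arbitrarily close to a minimizer of $V$---forces $\ol{\tilde H}_1\ge 0$. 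This absorbs the trailing $0$ in the maximum and gives the claimed identity $\ol H=\max\{\ol{\tilde H}_1,\ol H_2\}$. The only substantive ingredient is the stability of $\ol H$ under uniform perturbations of $H$; it is classical, but it is precisely the step that makes the approximation scheme legitimate, and I would cite it carefully.
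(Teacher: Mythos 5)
Your proof is correct and follows essentially the route the paper intends: the paper's own justification of this corollary is precisely ``Corollary \ref{cor:rep1} plus approximation,'' and you have filled in exactly that argument (strictly increasing approximations of $\tilde\psi_1$, the $1$-Lipschitz stability of $H\mapsto\ol H$ in the sup norm, and the observation that $\tilde H_1\geq 0$ with $\min_{\T^n}V=0$ forces $\ol{\tilde H}_1\geq 0$ so the trailing $0$ is absorbed). No gaps; the stability step you flag is indeed the only substantive ingredient and is standard.
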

\noindent See Figure \ref{fig2} for the graphs of $\psi, \tilde \psi_1, \psi_2$.
\begin{figure}[h]
\begin{center}
\begin{tikzpicture}

\draw[->] (-0.5,0)--(4,0);
\draw[->] (0,-2)--(0,4);
\draw (4,-0.2) node {$r$};
\draw (1,-0.2) node {$1$};

\draw[dashed, blue, ultra thick] (0,0)--(1,0);
\draw[blue] (0.2,-0.3) node{$\tilde \psi_1$};

\draw[dashed, red, ultra thick] plot [smooth] coordinates {(1,0) (2,-0.7) (3,-1.8)};
\draw[red] (3.2, -1.9) node{$\psi_2$};

\draw[thick] plot [smooth] coordinates {(0,1) (1,0) (2,1) (3,4)};
\draw (3.2,4) node {$\psi$};

\end{tikzpicture}
\caption{Graphs of $\psi, \tilde \psi_1, \psi_2$}
\label{fig2}
\end{center}
\end{figure}

When the oscillation of $V$ is large enough, we have furthermore the following result.

\begin{cor}\label{cor:rep3}   
Let $H\in C(\R^n)$ be a coercive Hamiltonian satisfying {\rm (H1)--(H3)}, 
except that we do not require $H_2$ to be quasiconcave.   
Assume that  
\[
{\rm osc_{\T^n}}V=\max_{\T^n}V-\min_{\T^n}V\geq  \max_{\ol{U}}H=\max_{\R ^n}H_2.
\] 
Then
\[
\ol H=\max\left\{ \ol H_1,  \  -\min_{\T^n}V \right\}.
\]
In particular,  $\ol H$ is quasiconvex in this situation. 
\end{cor}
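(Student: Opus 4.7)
The plan is to normalize $V$ so that $\min V=0$, which reduces the assertion to $\ol H=\max\{\ol H_1,0\}$ under the hypothesis $\max V\ge c_*:=\max_{\ol U}H$. The lower bound $\ol H\ge\max\{\ol H_1,0\}$ is standard: monotonicity of the effective in the Hamiltonian gives $\ol H\ge\ol H_1$ from $H\ge H_1$, and the argument of Step 1 in the proof of Theorem~\ref{thm:rep1} gives $\ol H\ge 0$.

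For the upper bound, I would fix $p\in\R^n$ and distinguish cases by the sign of $\ol H_1(p)$. If $\ol H_1(p)\ge 0$, the argument of Step 2 of Theorem~\ref{thm:rep1} applies verbatim---it uses only that $H_1$ is quasiconvex and even and that $H=H_1$ outside $U$, and not any property of $H_2$---producing a subsolution $w=-v(\cdot,-p)$ that yields $\ol H(p)\le\ol H_1(p)$.

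If $\ol H_1(p)<0$ we must show $\ol H(p)\le 0$. Since $\ol H_1$ is quasiconvex and satisfies $\ol H_1\le H_1=0$ on $\partial U$, the set $\{\ol H_1\le 0\}$ is convex and contains $\partial U$, hence all of $\ol U$; so this case automatically covers every $p\in\ol U$. For $p\in\partial U$ the conclusion is \eqref{basic-1}. For $p\in\R^n\setminus\ol U$ I would run a $\sig$-interpolation in the spirit of Step 4 of Theorem~\ref{thm:rep1}: letting $\ol H^\sig,\ol H_1^\sig$ denote the effective Hamiltonians of $H-\sig V,H_1-\sig V$, the function $\sig\mapsto\ol H_1^\sig(p)$ is continuous, equals $H_1(p)>0$ at $\sig=0$ and $\ol H_1(p)<0$ at $\sig=1$, so by intermediate value there is $s\in(0,1)$ with $\ol H_1^s(p)=0$; applying the previous case at $\sig=s$ gives $\ol H^s(p)\le 0$, and since $V\ge 0$ the map $\sig\mapsto\ol H^\sig(p)$ is non-increasing, so $\ol H(p)\le\ol H^s(p)\le 0$.

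The delicate remaining case is $p\in U$, and this is where the oscillation hypothesis enters essentially. Since $H(p)\le c_*\le\max V$, I can pick $x_*\in\T^n$ with $V(x_*)\ge H(p)$ and, by continuity, an open neighborhood $U_*$ of $x_*$ on which $V\ge H(p)$. My plan is to build a Lipschitz periodic $\phi$ that is constant on $U_*$ (so $p+D\phi=p$ and $H(p+D\phi)=H(p)\le V$ there) and on $\T^n\setminus U_*$ realizes a laminate with $p+D\phi$ taking values (approximately) on $\partial U$ (so $H(p+D\phi)\approx 0\le V$). The average constraint $\int(p+D\phi)\,dx=p$ is compatible with these pointwise constraints because $p\in\ol U\subseteq\overline{\mathrm{conv}}(\partial U)$; a convex-integration or mollification argument then produces, for each $\ep>0$, a genuine $W^{1,\infty}$ subsolution at level at most $\ep$, so $\ol H(p)\le 0$ in the limit. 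The main obstacle will be this gluing step: one has to weave the flat patch near $x_*$ (available precisely because of the osc hypothesis) together with the laminate on the complement so that the resulting $\phi$ is simultaneously Lipschitz, periodic, and a pointwise almost-everywhere subsolution.
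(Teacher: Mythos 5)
The parts of your argument through the case $p\in\R^n\setminus\ol U$ are correct and run parallel to Steps 1, 2 and 4 of the proof of Theorem \ref{thm:rep1} (and you are right that Step 2 there uses nothing about $H_2$, only quasiconvexity and evenness of $H_1$ and $H_1=H$ outside $U$). The genuine gap is the case $p\in U$ with $\ol H_1(p)<0$, which is exactly where the oscillation hypothesis must enter. Your laminate/convex-integration sketch does not deliver the needed upper bound: a Lipschitz periodic $\phi$ whose gradients lie a.e.\ on $\partial U$ is only an \emph{almost-everywhere} subsolution, and for a nonconvex $H$ that is not enough to conclude $\ol H(p)\le\ep$. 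You need either a Lipschitz \emph{viscosity} subsolution or a $C^1$ competitor for the bound $\ol H(p)\le\max_x\bigl(H(p+D\phi)-V\bigr)$, and both fail for the same reason: at the concave interfaces of a laminate the superdifferential (respectively, after mollification, the gradient itself) sweeps out segments joining points of $\partial U$, hence passes through $\ol U$, where $H$ can be as large as $\max_{\ol U}H$, while $V$ can be near $\min_{\T^n}V=0$ at those interfaces. Your flat patch $U_*$ shields only a small neighborhood of the single point $x_*$, but in dimension $n\ge 2$ the concave interfaces of any periodic laminate form codimension-one sets spread over the whole torus and cannot all be hidden inside $\{V\ge\max_{\ol U}H\}$; this is precisely what makes the one-dimensional construction (one concave kink per period, placed at the maximum point of $V$) special to $n=1$. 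So the ``gluing step'' you flag is not a technicality to be worked out later; as sketched, the step fails.

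The paper bypasses any corrector construction. After normalizing $\min_{\T^n}V=0$, choose a quasiconcave $H_2^+\in C(\R^n)$ with $\{H_2^+=0\}=\partial U$, $H\le H_2^+$ in $U$, $\max_{\R^n}H_2^+=\max_{\ol U}H$ and $H_2^+\to-\infty$ at infinity, and set $H^+=\max\{H_1,H_2^+\}\ge H$. Monotonicity gives the sandwich $\max\{\ol H_1,0\}\le\ol H\le\ol H^+$, and Theorem \ref{thm:rep1}, applied to $H^+$ (which does satisfy (H1)--(H3)), gives $\ol H^+=\max\{\ol H_1,\ol H_2^+,0\}$, while the crude bound $\ol H_2^+\le\max_{\R^n}H_2^+-\max_{\T^n}V=\max_{\ol U}H-\max_{\T^n}V\le 0$ kills the middle term thanks to the oscillation hypothesis. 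This handles all $p$ simultaneously (so no case analysis is needed at all) and uses the large oscillation only through the trivial inequality above. If you wish to rescue your route, you must produce genuine viscosity subsolutions rather than a.e.\ ones; comparing with the quasiconcave cap $H_2^+$ is in effect the paper's way of doing exactly that.
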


It is worth noting that the result of Corollary \ref{cor:rep3} is  interesting in the sense that
we do not require any structure of $H$ in $U$ except that $H>0$ there.

\begin{proof}
 Without loss of generality,  we assume that $\min_{\T^n}V=0$.  
 Choose a quasiconcave function $H_{2}^{+} \in C(\R^n)$ such that 
\[
\begin{cases}
\{H=0\}=\{H_{2}^{+}=0\}=\partial U,\\
 H\leq H_{2}^{+}   \text{ in } U, \text{ and }   \max_{\ol{U}}H=\max_{\R ^n}H_{2}^{+},\\
 \lim_{|p| \to \infty} H_2^+(p)=-\infty.
\end{cases}
\]
Denote $H^{+} \in C(\R^n)$ as
\[
H^{+}(p)=\max\{H, H_{2}^{+}\}=
\begin{cases}
H_1(p)  \quad &\text{for $p\in \R^n\backslash  U$,}\\
H_{2}^{+}(p)   \quad &\text{for $p\in \ol{U}$}.
\end{cases}
\]
Also denote by $\ol H^{+}$ and $\ol H_{2}^{+}$ the effective Hamiltonians associated with $H^+(p)-V(x)$ and $H_{2}^{+}(p)-V(x)$,  respectively.  
Apparently,
\begin{equation}\label{rep3-1}
\max\left\{ \ol H_1,  \  0\right\} \leq \ol H\leq \ol H^{+}.
\end{equation}
On the other hand,  by Theorem  \ref{thm:rep1},  the representation formula for $\ol{H}^+$ is
\begin{equation}\label{rep3-2}
\ol H^{+}=\max\left\{ \ol H_1,\  \ol H_{2}^{+},  \  0\right\}=\max\{ \ol H_1,  \  0 \},
\end{equation}
where the second equality is due to
\[
\ol H_{2}^{+}\leq \max_{\R ^n}H_{2}^{+}-\max_{\T ^n}V=\max_{\bar U}H-\max_{\R ^n}V\leq 0.
\]
We combine \eqref{rep3-1} and \eqref{rep3-2} to get the conclusion.
\end{proof}

\subsection{A more general case}
We first  extend Theorem \ref{thm:rep1} as following.
To avoid unnecessary technicalities, we only consider radially symmetric cases from now on.
The results still hold true for general Hamiltonians (without the radially symmetric assumption) 
under corresponding appropriate conditions.
\smallskip

Let $H:\R^n \to \R$ be such that
\begin{itemize}
\item[(H4)] $H(p)=\varphi(|p|)$ for $p \in \R^n$, where $\varphi \in C([0,\infty),\R)$ such that
\[
\begin{cases}
\varphi(0)>0, \ \varphi(2)=0, \ \lim_{r \to \infty} \varphi(r)=+\infty,\\
\varphi \text{ is strictly increasing on $[0,1]$ and $[2,\infty)$, and is strictly decreasing on $[1,2]$}.
\end{cases}
\]
\item[(H5)] $H_i(p)=\varphi_i(|p|)$ for $p\in \R^n$ and $1 \leq i \leq 3$, where $\varphi_i \in C([0,\infty),\R)$ such that
\[
\begin{cases}
\varphi_1= \varphi \text{ on } [2,\infty), \ \varphi_1 \text{ is strictly increasing on } [0,2],\\
\varphi_2 = \varphi \text{ on } [1,2], \ \varphi_2 \text{ is strictly decreasing on  $[0,1]$ and $[2,\infty)$}, \ \lim_{r \to \infty} \varphi_2(r) = -\infty,\\
\varphi_3=\varphi  \text{ on } [0,1], \ \varphi_3 \text{ is strictly increasing on } [1,\infty), \text{ and }  \varphi_3 > \varphi \text{ in } (1,\infty).
\end{cases}
\]
\end{itemize}

\begin{figure}[h]
\begin{center}
\begin{tikzpicture}

\draw[->] (-0.5,0)--(4,0);
\draw[->] (0,-2)--(0,4);
\draw (4,-0.2) node {$r$};
\draw (1,-0.2) node {$1$};
\draw (2,-0.2) node {$2$};

\draw[dashed, red, ultra thick] plot [smooth] coordinates {(0.95,2) (1.5, 2.5) (2,3)};
\draw[red] (2.2, 3) node{$\varphi_3$};

\draw[dashed, blue, ultra thick] (0,2.5)--(0.95,2);
\draw[dashed, blue, ultra thick] (2,0)--(3,-1.2);
\draw[blue] (0.2,2.7) node{$\varphi_2$};

\draw[dashed, purple, ultra thick] (0,-1)--(2,0);
\draw[purple] (0.2,-1.2) node{$\varphi_1$};

\draw[dashed] (0,2)--(1,2);
\draw[dashed] (1,0)--(1,2);

\draw[thick] plot [smooth] coordinates {(0,0.8) (1,2)  (2,0) (3,3)};
\draw (3.2,3) node {$\varphi$};

\end{tikzpicture}
\caption{Graphs of $\varphi, \varphi_1, \varphi_2, \varphi_3$}
\label{fig3}
\end{center}
\end{figure}
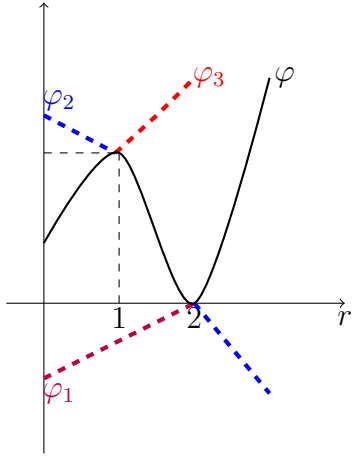

\begin{lem} \label{thm:rep2}
Let $H(p)=\varphi(|p|)$, $H_i(p)=\varphi_i(|p|)$ for $1 \leq i \leq 3$ and $p \in \R^n$,
where $\varphi,\varphi_1,\varphi_2, \varphi_3$ satisfy {\rm (H4)--(H5)}.
Let $V \in C(\T^n)$ be a potential energy with $\min_{\T^n} V=0$.

Assume that $\ol{H}$ is the effective Hamiltonian corresponding to $H(p)-V(x)$.
Assume also that $\ol{H}_i$ is the effective Hamiltonian corresponding to $H_i(p)- V(x)$ for $1 \leq i \leq 3$.
Then
\[
\begin{array}{ll}
\ol{H} &=\max\left\{0, \ol{H}_1, \ol{K}\right\}\\[5mm]
&= \max\left\{0, \ol{H}_1, \min\left\{\ol{H}_2, \ol{H}_3, \varphi(1)-\max_{\T^n} V \right\} \right\}.
\end{array}
\]
Here $\ol{K}$ is the effective Hamiltonian corresponding to $K(p)- V(x)$ for $K:\R^n \to \R$ defined as
\[
K(p)=\min\{\varphi_2(|p|), \varphi_3(|p|)\}=
\begin{cases}
\varphi(|p|) \qquad &\text{for } |p| \leq 2,\\
\varphi_2(|p|) \qquad &\text{for } |p| \geq 2.
\end{cases}
\]
In particular, both $\ol H$ and $\ol K$ are even. 

\end{lem}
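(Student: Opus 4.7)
The approach is a two-stage decomposition, each reducing to Theorem~\ref{thm:rep1}. For the first identity, I would write $H=\max\{H_1,K\}$, verified pointwise: on $\ol U=\{|p|\leq 2\}$ one has $H_1\leq 0\leq \varphi(|p|)=H=K$, while on $\R^n\setminus\ol U$ one has $K\leq 0\leq \varphi(|p|)=H=H_1$. The inequality $\ol H\geq \max\{0,\ol H_1,\ol K\}$ is then automatic from monotonicity of the effective Hamiltonian in $H$.

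For the second identity $\ol K=\min\{\ol H_2,\ol H_3,\varphi(1)-\max_{\T^n}V\}$, the key is a reflection-and-shift trick that makes Theorem~\ref{thm:rep1} directly applicable. Set $\tilde H:=-K+\varphi(1)$, $\tilde H_1:=-H_2+\varphi(1)$, $\tilde H_2:=-H_3+\varphi(1)$, and introduce the shifted potential $W:=\max_{\T^n}V-V\in C(\T^n)$ with $\min_{\T^n}W=0$. A direct check shows $\tilde H=\max\{\tilde H_1,\tilde H_2\}$, that $\tilde H_1$ is quasi-convex, even, and coercive, $\tilde H_2$ is quasi-concave with $\lim_{|p|\to\infty}\tilde H_2=-\infty$, and that $(\tilde H,\tilde H_1,\tilde H_2)$ satisfies hypotheses {\rm (H1)--(H3)} with bounded domain $\tilde U:=\{|p|<1\}$ (since $\max K=\varphi(1)$ is attained precisely at $|p|=1$). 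Theorem~\ref{thm:rep1} applied to $\tilde H(p)-W(x)$ yields $\ol{\tilde H}=\max\{\ol{\tilde H_1},\ol{\tilde H_2},0\}$. Translating back via the identities $\ol{\tilde H}(p)=\varphi(1)-\max_{\T^n}V-\ol K(p)$, $\ol{\tilde H_1}(p)=\varphi(1)-\max_{\T^n}V-\ol H_2(p)$, and $\ol{\tilde H_2}(p)=\varphi(1)-\max_{\T^n}V-\ol H_3(p)$ (each obtained by a direct substitution in the relevant cell problem) produces precisely $\ol K=\min\{\ol H_2,\ol H_3,\varphi(1)-\max_{\T^n}V\}$.

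For the upper bound in the first identity, I would mimic the three-case analysis of Theorem~\ref{thm:rep1}. Fix $p$. If $\ol H_1(p)\geq\max\{\ol K(p),0\}$, Step~2 of that proof applies verbatim, using the inf--max formula for the quasi-convex, even $H_1$ together with the identity $H_1=H$ on $\{H_1\geq 0\}$. If $\ol K(p)\geq\max\{\ol H_1(p),0\}$, one exploits the structural identity $K=H$ on $\ol U=\{K\geq 0\}$: the positivity of $\ol K(p)$ combined with $V\geq 0$ confines the relevant super-differentials of any cell-problem solution for $K$ to $\ol U$, where $K$ and $H$ coincide, making that solution a subsolution for $H$ at level $\ol K(p)$. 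If $\max\{\ol H_1(p),\ol K(p)\}<0$, the $\sigma$-continuation patching argument from Step~4 of Theorem~\ref{thm:rep1} applies without essential modification to give $\ol H(p)=0$. The main obstacle I anticipate is precisely this middle case: unlike the globally quasi-concave $H_2$ in Theorem~\ref{thm:rep1}, $K$ has an interior bump inside $\ol U$ and does not enjoy a global Barron--Jensen representation, so the confinement of super-differentials to $\ol U$ rests delicately on the positivity of $\ol K(p)$ and the specific identity $\{K\geq 0\}=\ol U$; an auxiliary perturbation or approximation may be required to handle super-differentials potentially landing outside $\ol U$. Once both formulas are established, evenness of $\ol H$ and $\ol K$ is immediate: $\ol H_1,\ol H_2,\ol H_3$ are each even by the inf--max (respectively sup--min) formulas for radially symmetric quasi-convex (respectively quasi-concave) Hamiltonians, and the operations $\max,\min$ preserve evenness.
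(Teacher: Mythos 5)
Your overall architecture is the same as the paper's: the pointwise identity $H=\max\{H_1,K\}$ giving $\ol H\ge\max\{0,\ol H_1,\ol K\}$; the reflection-and-shift $\tilde H=\varphi(1)-K$, $W=\max_{\T^n}V-V$, verification of (H1)--(H3) with $\tilde U=\{|p|<1\}$, and Theorem \ref{thm:rep1} to get $\ol K=\min\{\ol H_2,\ol H_3,\varphi(1)-\max_{\T^n}V\}$ (this is exactly what the paper compresses into ``considering $-K(-p)$''; note only that the substitution $v\mapsto-v$ really produces $\ol{\tilde H}(p)=\varphi(1)-\max_{\T^n}V-\ol K(-p)$, the reflection $p\mapsto-p$ being harmless here because the relevant effective Hamiltonians are even); the quasiconvex case treated verbatim as in Step 2 of Theorem \ref{thm:rep1}; and the $\sigma$-patching when $\max\{\ol H_1(p),\ol K(p)\}<0$.

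The genuine gap is the middle case $\ol K(p)\ge\max\{0,\ol H_1(p)\}$, precisely where you flag uncertainty. The mechanism you propose --- positivity of $\ol K(p)$ plus $V\ge0$ ``confines'' the superdifferentials of a cell solution for $K$ to $\ol U$ --- does not work: for $q\in D^+v(x)$ the viscosity subsolution property only yields the upper bound $K(p+q)\le\ol K(p)+V(x)$, and since $K$ is not quasiconcave (its superlevel sets at levels in $(\varphi(0),\varphi(1)]$ are annuli), the Barron--Jensen/Clarke-type argument that upgrades this to an equality is unavailable; thus $p+q$ may lie far outside $\ol U$, where $K<0<H$, and $v$ need not be a subsolution for $H$. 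The paper's missing idea is to never touch the cell problem for $K$ itself: by Corollary \ref{cor:rep2} (applied after the same reflection) one also has $\ol K=\min\{\ol{\tilde H}_2,\ol H_3\}$ with $\tilde H_2=\min\{H_2,\varphi(1)\}$ \emph{quasiconcave}, and one splits into two sub-cases. If $\ol K(p)=\ol{\tilde H}_2(p)$, run the confinement argument with the cell solution of $\tilde H_2$, for which equality at superdifferentials does hold and $\tilde H_2\ge H$ on $\{\tilde H_2\ge0\}=\{|p|\le2\}$, giving $\ol H(p)\le\ol{\tilde H}_2(p)$. If $\ol K(p)=\ol H_3(p)$, no cell problem is needed at all: $H_3\ge H$ gives $\ol H(p)\le\ol H_3(p)=\ol K(p)$ directly (this is also the observation, emphasized in italics in the paper, that makes the induction of Theorem \ref{Maintheorem} work). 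Without this or an equivalent device, your middle case --- and hence also the patching step, which invokes it at the intermediate value of $\sigma$ --- remains unproved.
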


\begin{proof} Considering $-K(-p)$,  thanks to the representation formula and evenness from Theorem \ref{thm:rep1},
\[
\ol{K}=\min\left\{\ol{H}_2, \ol{H}_3, \varphi(1)-\max_{\T^n} V \right\}.
\]
Define $\tilde \varphi_2 = \min\left\{\varphi_2, \varphi(1)\right\}$.
Let $\tilde H_2(p) = \tilde \varphi_2(|p|)$ and $\ol{\tilde H}_2$ be the effective Hamiltonian corresponding to $\tilde H_2(p)- V(x)$.
Then, thanks to Corollary \ref{cor:rep2}, we also have that
\begin{equation}\label{g0-1}
\ol{K} = \min \left\{ \ol{\tilde H}_2, \ol{H}_3 \right\}.
\end{equation}
Our goal is then to show that $\ol{H} = \max\left\{0, \ol{H}_1, \ol{K}\right\}$.
To do this, we again divide the proof into few steps for clarity. 
{\it Readers should notice that the proof below does not depend on  the quasiconvexity of $\ol{H}_3$.  
It only uses the fact that  $\ol{H}_3\geq \ol{H}$. 
This is essential to prove the most general result, Theorem \ref{Maintheorem}.}
\smallskip

\noindent {\bf Step 1.} Clearly $0 \leq \ol{H} \leq H$. This implies further that
\begin{equation} \label{g1-1}
\ol{H}(p)=0 \quad \text{ for all } |p|=2.
\end{equation}
We furthermore have that $\ol{K}, \ol{H}_1 \leq \ol{H}$ as $K, H_1 \leq H$. Thus,
\begin{equation}\label{g1-2}
\ol{H} \geq \max\left\{0, \ol{H}_1, \ol{K}\right\}
\end{equation}
We now show the reverse inequality of \eqref{g1-2} to finish the proof.
\smallskip

\noindent {\bf Step 2.} Fix $p \in \R^n$.
Assume that $\ol{H}_1(p) \geq \max\left\{0, \ol{K}(p) \right\}$.  Since $H_1$ is quasiconvex, 
we follow exactly the same lines of Step 2 in the proof of Theorem \ref{thm:rep1}
to deduce that $\ol{H}_1(p) \geq \ol{H}(p)$.
\smallskip

\noindent {\bf Step 3.} Assume that $\ol{K}(p) \geq \max\left\{0, \ol{H}_1(p) \right\}$.  Since $K$ is not quasiconvex or quasiconcave,  we cannot directly copy Step 2 or Step 3 in the proof of Theorem \ref{thm:rep1}.   Instead,  there are two cases that need to be considered.

Firstly, we consider the case that $\ol{K}(p) = \ol{\tilde H}_2(p) \leq \ol{H}_3(p)$.
Let $v(x,p)$ be a solution to the cell problem
\begin{equation}\label{g3-1}
\tilde H_2(p+Dv(x,p)) - V(x)=\ol{\tilde H}_2(p) \geq 0 \quad \text{ in } \T^n.
\end{equation}
Since $\tilde H_2$ is quasiconcave, for any $x \in \T^n$ and $q \in D^+v(x,p)$, we have
\[
\tilde H_2(p+q) - V(x)= \ol{\tilde H}_2(p) \geq 0,
\]
which gives that $\tilde H_2(p+q) \geq 0$ and hence $\tilde H_2(p+q) \geq H(p+q)$.
Therefore, $v(x,p)$ is a viscosity subsolution to
\[
H(p+Dv(x,p)) - V(x) = \ol{\tilde H}_2(p) \quad \text{ in } \T^n.
\]
We conclude that $\ol{K}(p) = \ol{\tilde H}_2(p) \geq \ol{H}(p)$.

Secondly, assume that $\ol{K}(p) = \ol{H}_3(p) \leq  \ol{\tilde H}_2(p)$. Since $\varphi_3\geq \varphi$,   $\ol{H}_3(p)\geq  \ol{H}(p)$.   
Combining with $ \ol{H}(p)\geq  \ol{K}(p)$ in \eqref{g1-2}, we obtain  $\ol{K}(p)= \ol{H}(p)$ in this step.  
\smallskip

\noindent {\bf Step 4.}
Assume that $0> \max\left\{\ol{H}_1(p), \ol{K}(p)\right\}$.
Our goal now is to show $\ol{H}(p)=0$.
Thanks to \eqref{g1-1} in Step 1, we may assume that $|p| \neq 2$.

For $\sig \in [0,1]$ and $i=1,2$, let $\ol{H}^\sig, \ol{H}_1^\sig$, $\ol{K}^\sig$ be the effective Hamiltonians corresponding to $H(p)-\sig V(x), H_1(p) - \sig V(x)$, $K(p)-\sig V(x)$, respectively. It is clear that
\begin{equation}\label{g4-1}
0 \leq \ol{H}^1=\ol{H} \leq \ol{H}^\sig \quad \text{ for all } \sig \in [0,1].
\end{equation}
By repeating Steps 2 and 3 above, we get 
\begin{equation} \label{g4-2}
\text{For $p\in\R^n$ and $\sig \in [0,1]$, if $\max\left\{\ol{H}^\sig_1(p),\ol{K}^\sig(p)\right\}=0$, then $\ol{H}^\sig(p)=0$.}
\end{equation}
We only consider the case $|p|<2$ here.
The case $|p|>2$ is analogous. Notice that
\[
H(p)= K(p) = \ol{K}^0(p)>0 \quad \text{and} \quad \ol{K}(p)= \ol{K}^1(p) <0.
\]
By the continuity of $\sig \mapsto \ol{K}^\sig(p)$, there exists $s \in (0,1)$ such that $\ol{K}^s(p)=0$.
Note furthermore that, as $|p|<2$, $\ol{H}_1^s(p) \leq H_1(p) <0$.
These, together with \eqref{g4-1} and \eqref{g4-2}, yield the desired result.
\end{proof}

\subsection{General cases}\label{generalcase}
By using induction, 
we can obtain min-max (max-min) formulas for $\ol{H}$ in case $H(p)=\varphi(|p|)$ where $\varphi$ satisfies some certain conditions described below.
We consider two cases corresponding to Figures \ref{fig4} and \ref{fig5}.
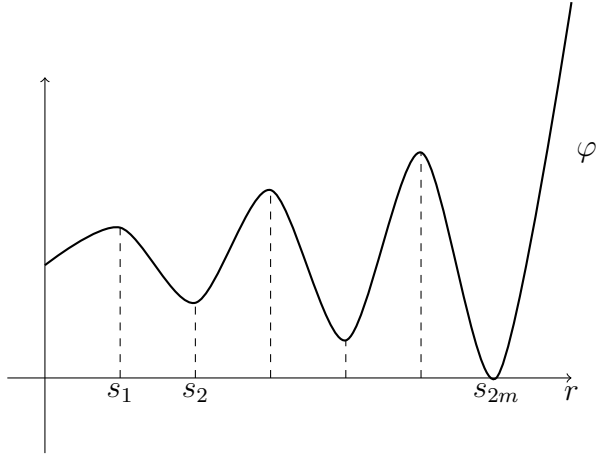
\begin{figure}[h] 
\begin{center}
\begin{tikzpicture}

\draw[->] (-0.5,0)--(7,0);
\draw[->] (0,-1)--(0,4);
\draw (7,-0.2) node {$r$};

\draw[thick] plot [smooth] coordinates {(0,1.5) (1,2)  (2,1) (3,2.5) (4,0.5) (5,3) (6,0) (7,5)};
\draw (7.2,3) node {$\varphi$};

\draw[dashed] (1,0)--(1,2);
\draw[dashed] (2,0)--(2,1);
\draw[dashed] (3,0)--(3,2.5);
\draw[dashed] (4,0)--(4,0.5);
\draw[dashed] (5,0)--(5,3);

\draw (1,-0.2) node {$s_1$};
\draw (2,-0.2) node {$s_2$};
\draw (6,-0.2) node {$s_{2m}$};

\end{tikzpicture}

\caption{Graph of $\varphi$ in first general case}
\label{fig4}
\end{center}
\end{figure}

In this first general case corresponding to Figure \ref{fig4}, we assume that
\begin{itemize}
\item[(H6)] $\varphi \in C([0,\infty),\R)$ satisfying 
\[
\begin{cases}
\text{there exist $m \in \N$ and $0=s_0 <s_1 < \ldots s_{2m}<\infty=s_{2m+1}$ such that}\\
\text{$\varphi$ is strictly increasing in $(s_{2i},s_{2i+1})$, and is strictly decreasing in $(s_{2i+1},s_{2i+2})$,}\\
\text{$\varphi(s_0)>\varphi(s_2)>\ldots > \varphi(s_{2m})$, and  $\varphi(s_1) < \varphi(s_3) <\ldots < \varphi(s_{2m+1})=\infty$.}
\end{cases}
\]

\end{itemize}

\noindent For $0\leq i\leq m$, 

$\bullet$ let $\varphi_{2i}:[0,\infty)\to \mathbb{R}$ be a continuous, strictly increasing function such that $\varphi_{2i}=\varphi$ on $[s_{2i},s_{2i+1}]$ and  $\lim_{s\to \infty} \varphi_{2i}(s)=\infty$.  Also  $\varphi_{2i}\geq \varphi_{2i+2}$. 

$\bullet$  let $\varphi_{2i+1} : [0,\infty)\to \mathbb{R}$ be a continuous, strictly decreasing function such that $\varphi_{2i+1}=\varphi$ on $[s_{2i+1}, s_{2i+2}]$ and $\lim_{s\to \infty} \varphi_{2i+1}(s)=-\infty$.   Also  $\varphi_{2i+1}\leq \varphi_{2i+3}$.

Define 
\[
H_{m-1}(p)=\max\{\varphi(|p|), \ \varphi_{2m-2}(|p|)\}=
\begin{cases}
\varphi(|p|)  \quad &\text{for $|p|\leq s_{2m-1}$,}\\
\varphi_{2m-2}(|p|) \quad &\text{for $|p|>s_{2m-1}$}
\end{cases}
\]

and

\[
k_{m-1}(s)=\min\{\varphi(s), \ \varphi_{2m-1}(s)\}=
\begin{cases}
\varphi(s)  \quad &\text{for $s\leq s_{2m}$,}\\
\varphi_{2m-1}(s) \quad &\text{for $s>s_{2m}$.}
\end{cases}
\]

Denote $\ol{H}_{m-1}$,  $\ol H_{m}$,  $\ol{K}_{m-1}$, $\ol{\Phi}_{j}$ as the  effective Hamiltonians associated with 
the Hamiltonians $H_{m-1}(p)-V(x)$,  $\varphi (|p|) - V(x)$,  $k_{m-1}(|p|)- V(x)$ and $\varphi_{j}(|p|)- V(x)$ for $0\leq j\leq 2m$, respectively.  

\medskip

The following is our main decomposition theorem in this paper. 

\begin{thm}\label{Maintheorem}  Assume that {\rm (H6)} holds for some $m\in \N$. Then
\begin{equation}\label{mainfor-1}
 \ol{H}_m=\max\left\{\ol{K}_{m-1}, \  \ol{\Phi}_{2m},\ \varphi(s_{2m})-\min_{\T^n}V\right\},
\end{equation}
and
\begin{equation}\label{mainfor-2}
\ol{K}_{m-1}=\min\left\{\ol{H}_{m-1}, \  \ol{\Phi}_{2m-1},\ \varphi(s_{2m-1})-\max_{\T^n}V\right\}.
\end{equation}
In particular,  $ \ol{H}_m$ and $ \ol{K}_{m-1}$ are both even. 
\end{thm}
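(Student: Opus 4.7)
The plan is to prove both formulas by a joint induction on $m$, with the base case $m=1$ supplied essentially by Lemma \ref{thm:rep2}. Indeed, when $m=1$ the radial profile $\varphi$ has exactly one downward-then-upward bump on $[s_1,s_2]$, and under the substitutions $\varphi_2 \leftrightarrow H_1$, $\varphi_1 \leftrightarrow H_2$, $H_0 \leftrightarrow H_3$, $k_0 \leftrightarrow K$ the formulas \eqref{mainfor-1}--\eqref{mainfor-2} collapse to the two displays inside the statement of Lemma \ref{thm:rep2}, up to the harmless normalizations $\min V = 0$ and $\varphi(s_{2m})=0$ which are easily removed by an additive shift.

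For the inductive step, to establish \eqref{mainfor-1} at level $m$ I intend to recycle, almost verbatim, the four-step proof of Lemma \ref{thm:rep2} applied at the last bump $[s_{2m-1}, s_{2m}]$ of $\varphi$, using the substitutions $H_1 \leadsto \varphi_{2m}$ (the outer quasi-convex extension), $K \leadsto k_{m-1}$, $H_3 \leadsto H_{m-1}$, and $H_2 \leadsto \varphi_{2m-1}$. The italicized remark inside the proof of Lemma \ref{thm:rep2} is essential here: Step 3 there never uses quasi-convexity of $H_3$, only the comparison $\ol H_3 \geq \ol H$. In our setting this becomes $\ol H_{m-1} \geq \ol H_m$, which holds pointwise because $H_{m-1}=\max\{\varphi,\varphi_{2m-2}\}\ge \varphi$ by the construction in (H6). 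The threshold $0$ in Lemma \ref{thm:rep2} is replaced throughout by $\varphi(s_{2m}) - \min_{\T^n} V$, and the sub- and super-solution constructions in Steps 1--3, together with the $\sigma$-patching in Step 4, transfer without conceptual change.

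For \eqref{mainfor-2} I plan to exploit the same duality invoked at the start of the proof of Lemma \ref{thm:rep2}: applying the inductive hypothesis to the radial Hamiltonian $-k_{m-1}(|p|) + \varphi(s_{2m-1})$ paired with the potential $\max_{\T^n} V - V(x)$, together with the substitution $v \mapsto -v$ at the level of correctors, reverses the roles of $\max / \min$ and of quasi-convex / quasi-concave pieces attached past $s_{2m}$. The flipped Hamiltonian has the same structural type as $H_m$ but with only $m-1$ genuine bumps, so \eqref{mainfor-1} applies to it, and undoing the flip produces \eqref{mainfor-2}. Evenness of $\ol H_m$ and $\ol K_{m-1}$ is then automatic from the formulas: $\ol\Phi_{2m}$ and $\ol\Phi_{2m-1}$ are even by the quasi-convex (resp.\ quasi-concave) inf--max (resp.\ sup--min) representation used in Theorem \ref{thm:rep1}, while $\ol H_{m-1}$ is even by the inductive hypothesis.

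The main obstacle I anticipate is the generalized patching Step 4. Along the rescaled family $\sigma V$ with $\sigma\in[0,1]$ one has to control three effective Hamiltonians simultaneously ($\ol H_m^\sigma$, $\ol K_{m-1}^\sigma$, $\ol \Phi_{2m}^\sigma$) rather than the two visible in Lemma \ref{thm:rep2}, and one must still produce, via the intermediate value theorem, some $s \in (0,1)$ at which the relevant sub-decomposition crosses the shifted zero $\varphi(s_{2m})-\sigma\min_{\T^n} V$. Continuity of $\sigma \mapsto \ol K_{m-1}^\sigma(p)$ follows inductively from \eqref{mainfor-2}, so this is ultimately a careful bookkeeping task; nevertheless, tracking all the sign changes and verifying that the correct combination of inequalities holds in every relevant region of $\R^n$ is where the bulk of the technical care will be concentrated.
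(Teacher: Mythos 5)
Your proposal follows essentially the same route as the paper's own (very terse) proof: a joint induction on $m$ whose base case is Lemma \ref{thm:rep2} together with Theorem \ref{thm:rep1}, and whose inductive step reruns the four-step argument of Lemma \ref{thm:rep2} at the outermost bump, exploiting exactly the italicized observation that the leftover non-quasiconvex piece is only needed through the comparison $\ol H_{m-1}\ge \ol H_m$ (from $H_{m-1}\ge\varphi$), with the min formula obtained by the $-k_{m-1}(-p)$ flip and $\sigma$-patching as in Step 4. The one small imprecision is your claim that \eqref{mainfor-1} ``applies'' to the flipped Hamiltonian: $-k_{m-1}$ has a local maximum at the origin, so it belongs to the mirrored class of Figure \ref{fig5} rather than to (H6), and one must rerun the mirrored argument (which is what your $v\mapsto -v$, max/min-swapping description in effect does, and what the paper's ``similar arguments'' step amounts to), with the needed interior decomposition being \eqref{mainfor-1} at the previous level, so the induction stays well-founded.
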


Again,  we would like to point out that the evenness of $ \ol{H}_m$ and $ \ol{K}_{m-1}$ is far from being obvious although $H_m$ and $K_m$ are both even. 
See the discussion in Subsection \ref{subsec:even} for this subtle issue. 

\begin{proof}
We prove by induction. 
When $m=1$, the two formulas  \eqref{mainfor-1} and \eqref{mainfor-2}  follow from Lemma \ref{thm:rep2} and Theorem \ref{thm:rep1}.

Assume that \eqref{mainfor-1} and \eqref{mainfor-2} hold for $m \in \N$.  We need to verify these equalities for $m+1$. 
Using similar arguments as those in the proof Lemma \ref{thm:rep2}, noting  the statement in italic right above Step 1, we first derive that 
\[
\ol{K}_{m}=\min\left\{\ol{H}_{m}, \  \ol{\Phi}_{2m+1},\ \varphi(s_{2m+1})-\max_{\T^n}V\right\}.
\]
Then again, by basically repeating the proof of  Lemma \ref{thm:rep2}, we obtain
\[
 \ol{H}_{m+1}=\max\left\{\ol{K}_{m}, \  \ol{\Phi}_{2m+2},\ \varphi(s_{2m+2})-\min_{\T^n}V\right\}.
\]
\end{proof}

\begin{rem}\label{rem:flat}  
(i) By approximation, we see that representation formulas \eqref{mainfor-1} and \eqref{mainfor-2} still hold true if we relax (H6) a bit, that is, we only require that
$\varphi$ satisfies
\[
\begin{cases}
\text{$\varphi$ is  increasing in $(s_{2i},s_{2i+1})$, and is decreasing in $(s_{2i+1},s_{2i+2})$,}\\
\text{$\varphi(s_0) \geq \varphi(s_2) \geq \ldots \geq  \varphi(s_{2m})$, and $\varphi(s_1) \leq \varphi(s_3) \leq \ldots < \varphi(s_{2m+1})=\infty$.}
\end{cases}
\]

(ii) According to Corollary  \ref{cor:rep3},  if ${\rm osc}_{\T^n}V =\max_{\T ^n}V-\min_{\T^n}V\geq \varphi(s_{2m-1})-\varphi(s_{2m})$, then $\ol H$ is quasiconvex and 
\[
\ol H_m=\max\left\{\ol \Phi_{2m}, \ \varphi(s_{2m})-\min_{\T^n}V\right\}.
\]

\end{rem}

The second general case corresponds to the case where  $H(p)=-k(|p|)$ for all $p \in \R^n$ as described in Figure 5.
After changing the notations appropriately, we obtain similar representation formulas as in Theorem \ref{Maintheorem}.
We omit the details here.
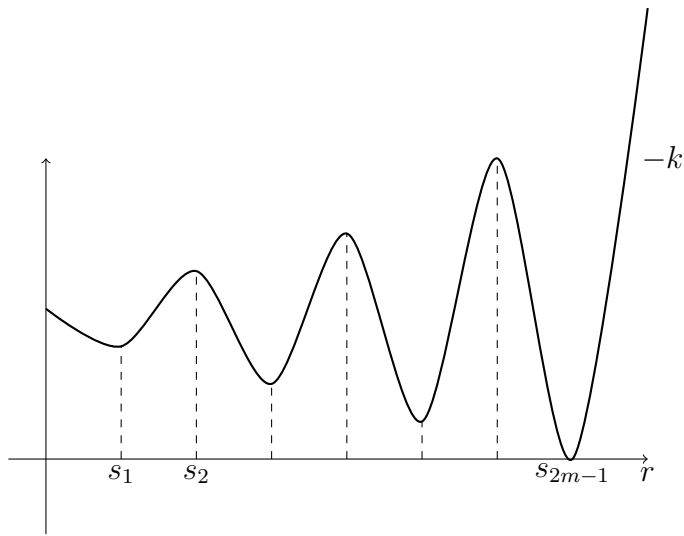
\begin{figure}[h]
\begin{center}
\begin{tikzpicture}

\draw[->] (-0.5,0)--(8,0);
\draw[->] (0,-1)--(0,4);
\draw (8,-0.2) node {$r$};

\draw[thick] plot [smooth] coordinates {(0,2) (1,1.5)  (2,2.5) (3,1) (4,3) (5,0.5) (6,4) (7,0) (8,6)};

\draw[dashed] (1,0)--(1,1.5);
\draw[dashed] (2,0)--(2,2.5);
\draw[dashed] (3,0)--(3,1);
\draw[dashed] (4,0)--(4,3);
\draw[dashed] (5,0)--(5,0.5);
\draw[dashed] (6,0)--(6,4);

\draw (1,-0.2) node {$s_1$};
\draw (2,-0.2) node {$s_2$};
\draw (7,-0.2) node {$s_{2m-1}$};
\draw (8.2,4) node {$-k$};
\end{tikzpicture}

\caption{Graph of $-k$ in the second general case}
\label{fig5}
\end{center}
\end{figure}

\subsection{ ``Non-decomposability" and  Breakdown of symmetry}\label{subsec:even}

A natural question is whether we can extend Theorem \ref{Maintheorem} to other  nonconvex $H$,   i.e.,  there exist quasiconvex/concave $H_i$  ($1\leq i\leq m$) such that $\ol H$ is given by a ``decomposition" formula (e.g., min-max type)  involving $\ol H_i$, $\min V$ and $\max V$
\begin{equation}\label{uni-decom}
\ol H=G(\ol H_1,..., \ol H_m, \ \min V, \ \max V),
\end{equation}
 for any $V\in C(\Bbb T^n)$.  Here $\ol H$ and $\ol H_i$ are effective Hamiltonians associated with $H-V$ and $H_i-V$.  Note that for quasiconvex/concave function $F$,   using the inf-max formula (\ref{convex-min-max}),  it is easy to see that the effective Hamiltonians associated with $F(p)-V(x)$ and $F(p)-V(-x)$ are the same.   Hence if such a ``decomposition"  formula indeed exists for a specific nonconvex $H$,   effective Hamiltonians associated with $H(p)-V(x)$ and $H(p)-V(-x)$ have to be identical as well.  In particular,  if $H$ is an even function,  this is equivalent to saying that $\ol H$ is even too, which leads to the following  question. 

\begin{quest}\label{quest:even}
Let $H \in C(\R ^n)$ be a coercive and even Hamiltonian,  and $V\in C(\T^n)$ be a given potential.
Let $\ol{H}$ be the effective Hamiltonian associated with $H(p)-V(x)$.
Is it true that  $\ol H$  is also even?  In general,  we may ask  what  properties of the original Hamiltonian  will  be preserved under homogenization.  

\end{quest}

Even though that this is a simple and natural question, it has not been studied much in the literature as far as the authors know.
We give below some answers and discussions to this:

\smallskip

$\bullet$  If $H$ is quasiconvex,  the answer is ``yes"  due to the inf-max formula
\[
\ol{H}(p)= \inf_{\phi \in C^1(\T^n)} \max_{x \in \R^n} \left ( H(p+D\phi(x)) - V(x) \right)
\]
as shown in the proof of Lemma \ref{thm:rep1}.

$\bullet$  For  genuinely nonconvex $H$,  if  $\ol{H}$ can be written as a min-max formula involving effective Hamiltonians of even quasiconvex (or quasiconcave) Hamiltonians,
then  $\ol H$ is still even (e.g.,  see Corollary \ref{cor:rep1}, Lemma \ref{thm:rep2}, and Theorem \ref{Maintheorem}). 

$\bullet$  However, in general, the evenness is lost as presented in  Remark 1.2 in  \cite{LTY}.
Let us quickly recall the setting there.
We consider the case $n=1$, and choose  $H(p)=\varphi(|p|)$ for $p\in\R$, where $\varphi$ satisfies (H8)  (see Figure \ref{fig6} below)
with $m_1=\frac{1}{3}$ and $M_1=\frac{1}{2}$ .
Fix $s\in (0,1)$, and set $V_s(x)=\min\left\{{x\over s},\  {1-x\over 1-s}\right\}$ for $x\in  [0,1]$. 
Extend $V$ to $\R$ in a periodic way. 
Then  $\ol H$ is not even unless $s={1\over 2}$. 
In particular, this implies that a decomposition formula for $\ol H$ does not exist.  Also, see Figure \ref{qtyEx1_5_weno3d} below for loss of evenness when the Hamiltonian is of double-well type.

$\bullet$  It is extremely interesting if we can point out some further general requirements on $H$ and $V$
in the genuinely nonconvex setting, under which $\ol{H}$ is  even.
The interplay between $H$ and $V$ plays a crucial role here (see Remark \ref{rem:conj} for intriguing observations). 

Some related  discussions  and  interesting applications of evenness can also be found in  \cite{See}.

\section{Quasi-convexification phenomenon in multi-dimensional spaces} \label{sec:quasi}

Intuitively,  homogenization, a nonlinear averaging procedure, makes the effective Hamiltonian less nonconvex. 
The question is how to describe this in a rigorous and systematic way. 
Some special cases have been handled  in Remark  \ref{rem:flat}. In this section,  we look at more generic and  important situations: general radially symmetric Hamiltonians and a typical double-well Hamiltonian.  These two types of Hamiltonians more or less capture essential features of nonconvexity.    In some sense,  quasi-convexification represents  a scenario where there is no genuine decomposition of $\ol H$.  
Due to the difficulty in rigorous analysis,  we focus more on numerical computations.  
The Lax-Friedrichs based big-T method is used  to compute the effective Hamiltonian.

 \subsection{Radially symmetric Hamiltonians} 
Assume that $H(p)=\varphi(|p|)$ for all $p\in \R^n$, where $\varphi:[0,\infty)\to  \mathbb{R}$ is a given function.
 The following is quite a general condition on $\varphi$.
 
 \begin{itemize}
\item[(H7)] $\varphi \in C([0,\infty),\R)$ satisfying that $\lim_{s\to \infty} \varphi(s)=+\infty$ and 
\[
\begin{cases}
\text{there exist $m \in \N$ and $0=s_0 <s_1 < \ldots s_{2m}<\infty=s_{2m+1}$ such that}\\
\text{$\varphi$ is strictly increasing in $(s_{2i},s_{2i+1})$, and is strictly decreasing in $(s_{2i+1},s_{2i+2})$.}
\end{cases}
\]
\end{itemize}
 It is clear that (H7) is more general than (H6).  In fact,  any coercive function $\psi \in C([0,\infty),\R)$ can be approximated by $\varphi$ satisfying (H7). 

Denote by
\[
\begin{cases}
M_i=\varphi(s_{2i-1})   \quad &\text{ for } 1\leq i \leq  m,\\
m_j=\varphi(s_{2j})  \quad &\text{ for } 1 \leq j \leq m.
\end{cases}
\]

We propose the following conjecture.  

\begin{conj}\label{conj:convex} 
Assume that {\rm (H7)} holds. 
Assume further that $\varphi(0)=\min \varphi=0$.
Let $H(p)=\varphi(|p|)$ for all $p\in \R^n$, and $V\in C(\T^n)$ be a given potential function.
Let $\ol{H}$ be the effective Hamiltonian corresponding to $H(p)-V(x)$.
If 
\[
{\rm osc}_{\T ^n}V=\max_{\T^n} V - \min_{\T^n} V \geq   \max_{i, j}(M_i-m_j),
\]  
then the effective Hamiltonian $\ol {H}$ is quasiconvex.  
\end{conj}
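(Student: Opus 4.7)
The plan is to induct on the number $m$ of bumps of $\varphi$. The base case $m = 0$ is immediate: $\varphi$ is then strictly increasing on $[0, \infty)$, so $H(p) = \varphi(|p|)$ is quasiconvex, and hence $\ol H$ is quasiconvex by the inf-max formula \eqref{convex-min-max}. For $m \geq 1$, the main work is to establish a generalized version of Theorem \ref{Maintheorem} that applies to every $\varphi$ satisfying (H7), dropping the monotonicity $\varphi(s_0) > \varphi(s_2) > \cdots$ and $\varphi(s_1) < \varphi(s_3) < \cdots$ built into (H6). I would do so by iterating the two-function decomposition of Lemma \ref{thm:rep2}, peeling bumps one at a time. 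The crucial observation, highlighted in italics inside the proof of that lemma, is that its Steps 2--4 use only the comparison $\ol{\Phi}_i \geq \ol H$, not the quasiconvexity of the auxiliary pieces. This should yield a nested min-max formula for $\ol H$ in terms of $\ol{\Phi}_{2i}$ (quasiconvex, from the increasing pieces), $\ol{\Phi}_{2i-1}$ (from the quasiconcave extensions of the decreasing pieces), and constants $M_i - \max_{\T^n} V$ at each local maximum together with $m_j - \min_{\T^n} V$ at each local minimum.

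Once such a formula is in hand, the oscillation hypothesis should force it to collapse. Indeed, for any indices $i, j$ the hypothesis gives
\[
M_i - \max_{\T^n} V \;\leq\; m_j - \min_{\T^n} V,
\]
and, since the envelope $\varphi_{2i-1}$ attains its maximum $M_i$, the effective Hamiltonian satisfies $\ol{\Phi}_{2i-1} \leq M_i - \min_{\T^n} V$. Combining these two bounds, every ``min''-term in the nested formula---which always involves at least one quantity of ``local-max type''---is dominated by every surrounding ``max''-term of ``local-min type''. Hence the formula should collapse to
\[
\ol H = \max_{0 \leq i \leq m} \max\bigl\{\ol{\Phi}_{2i},\ \varphi(s_{2i}) - \min_{\T^n} V\bigr\},
\]
a maximum of quasiconvex effective Hamiltonians and constants, which is quasiconvex.

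The main obstacle is the first step: proving the generalized decomposition without the monotonicity of the critical values. In the proof of Theorem \ref{Maintheorem}, this monotonicity ensures the nesting $\varphi_{2i} \geq \varphi_{2i+2}$ and $\varphi_{2i-1} \leq \varphi_{2i+1}$ of the envelopes, which drives a clean outside-in induction. Without it, the envelopes must be constructed locally around each bump and peeled in a more delicate order (for instance, by the value of $M_i$ rather than by $i$), and several auxiliary inequalities must be tracked. A subsidiary issue is that the patching step (Step 4 of Lemma \ref{thm:rep2}) deforms $V$ along $\sigma V$ for $\sigma \in [0, 1]$: the oscillation bound scales linearly in $\sigma$, so one must verify that the intermediate-value argument still produces the correct $s \in (0, 1)$ at which the relevant effective Hamiltonians vanish when several interior bumps compete simultaneously.
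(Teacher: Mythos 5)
There is a fundamental problem with your plan: the statement you are trying to prove is stated in the paper as an open conjecture (Conjecture \ref{conj:convex}); the paper proves it only in the special case covered by Remark \ref{rem:flat}(ii) (i.e.\ when $\varphi$ has the (H6) monotonicity of critical values) and, under (H8), only the partial statement of Theorem \ref{thm:conj} about sublevel sets $\{\ol H\leq \mu\}$ with $\mu\geq m_1$; the full result is known only for $n=1$, by the essentially one-dimensional arguments of \cite{ATY2}, not by any decomposition. Your strategy is to reduce the general (H7) case to the decomposable one by establishing a ``generalized Theorem \ref{Maintheorem}'' without the monotonicity $\varphi(s_0)>\varphi(s_2)>\cdots$, $\varphi(s_1)<\varphi(s_3)<\cdots$, and you correctly flag this as the main obstacle --- but the paper's own Section \ref{subsec:even} and Remark \ref{rem:conj} show that this step is not merely delicate, it is false in general. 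Any formula expressing $\ol H$ through effective Hamiltonians of radially symmetric quasiconvex/quasiconcave pieces together with $\min_{\T^n}V$ and $\max_{\T^n}V$ forces $\ol H$ to be even (each $\ol\Phi_i$ is even by the inf-max formula), and in particular your collapsed formula $\ol H=\max_{0\leq i\leq m}\max\{\ol\Phi_{2i},\ \varphi(s_{2i})-\min_{\T^n}V\}$ is an even function of $p$. Yet for $n=1$, $\varphi$ satisfying (H8) (which satisfies (H7) but not (H6), since $\varphi(0)<m_1$), and the asymmetric potential $V_s$ with $c_0=\max V>M_1-m_1$ --- so the conjecture's oscillation hypothesis holds --- Remark \ref{rem:conj} (third bullet, adapting \cite{LTY}) shows that $\{\ol H\leq\mu\}$ is not even for $\mu\in(M_1-c_0,m_1)$ when $s\neq\tfrac12$. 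So both the intermediate nested min-max formula you want and its claimed collapse are contradicted by an explicit example inside the hypotheses of the conjecture; evenness (hence decomposability) fails precisely in the regime the conjecture addresses, while quasiconvexity nevertheless persists there for other, non-formulaic reasons.

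Concretely, the mechanism of the breakdown is the one the paper's induction relies on: the envelopes must nest, $\varphi_{2i}\geq\varphi_{2i+2}$ and $\varphi_{2i+1}\leq\varphi_{2i+3}$, and the quasiconcave piece at each decreasing segment must stay below the neighboring local minima for the patching step (Step 4 of Lemma \ref{thm:rep2}) to glue the pieces along the zero level of the shifted Hamiltonian. Once the critical values are non-monotone --- already in the simplest case (H8), where $\varphi(0)=0<m_1<M_1$ --- these inequalities fail, Step 4 has no analogue, and the sublevel sets of $\ol H$ below $m_1$ (exactly the ones not treated by Theorem \ref{thm:conj}) are governed by an interplay between $H$ and the fine structure of $V$ that no a priori min-max formula captures. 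A correct proof would therefore need a genuinely different idea (as in the one-dimensional proof of \cite{ATY2}) rather than an extension of the decomposition machinery; your base case and the observation that Steps 2--4 of Lemma \ref{thm:rep2} only use $\ol\Phi_i\geq\ol H$ are fine, but they do not bridge this gap.
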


When $n=1$, the above conjecture was proven in  \cite{ATY2} based on some essentially one dimensional approaches.  
In multi-dimensional spaces, this conjecture seems quite challenging in general (Remark 3 is a special case).  
Let us now consider a basic situation, which we believe is an important step toward proving Conjecture \ref{conj:convex}.
\begin{itemize}
\item[(H8)] $\varphi \in C([0,\infty),\R)$ such that
there exist $0<s_1<s_2<\infty$ satisfying
\[
\begin{cases}
\text{$\varphi(0)=0 < \varphi(s_2)=m_1 < \varphi(s_1)=M_1<\lim_{s \to \infty} \varphi(s)=+\infty$,}\\
\text{$\varphi$ is strictly increasing on $[0,s_1]$ and $[s_2,\infty)$ and $\varphi$ is strictly decreasing on $[s_1,s_2]$.}
\end{cases}
\]
\end{itemize}
See Figure \ref{fig6}.  For this particular case,  the conjecture says that $\ol H$ is quasiconvex if 
$$
{\rm osc}_{\T ^n}V\geq M_1-m_1.
$$
This is clear in terms of numerical results (Figure \ref{qtyEx2_1}).  

\smallskip

\noindent{\bf Numerical example 1.}
Let $n=2$.  We consider the following setting
\begin{align*}
&H(p) = \min\left\{4\sqrt{p_1^2+p_2^2},\; 2\left|\sqrt{p_1^2+p_2^2}-1\right|+1\right\} \quad &\text{ for } p=(p_1,p_2)\in \R^2,  \\
&V(x)= S*(1+\sin(2 \pi x_1))( 1+ \sin(2\pi x_2))  \quad &\text{ for } x=(x_1,x_2)\in \T^2.
\end{align*}
The constant $S$ serves as the scaling parameter to increase or decrease the effect of the potential energy $V$.  
For this specific case,  $M_1-m_1=2-1=1$ and ${\rm osc}_{\T ^2}V=4S$.  So the threshold value is $S=0.25$. 

The Lax-Friedrichs based big-T method is used  to compute this effective Hamiltonian.  
The computational $x$ domain $[0,1]^2$ is discretized by $401\times 401$ mesh points and the $p$ domain $[-1,1]^2$ is sampled by $21\times 21$ mesh points. 
The initial condition for the big-T method is taken to be $\cos(2\pi x_1)\sin(2\pi x_2)$.  See Figure \ref{qtyEx2_1}.

\begin{figure}[htb]
 \centering
 (a){\includegraphics[scale=0.22]{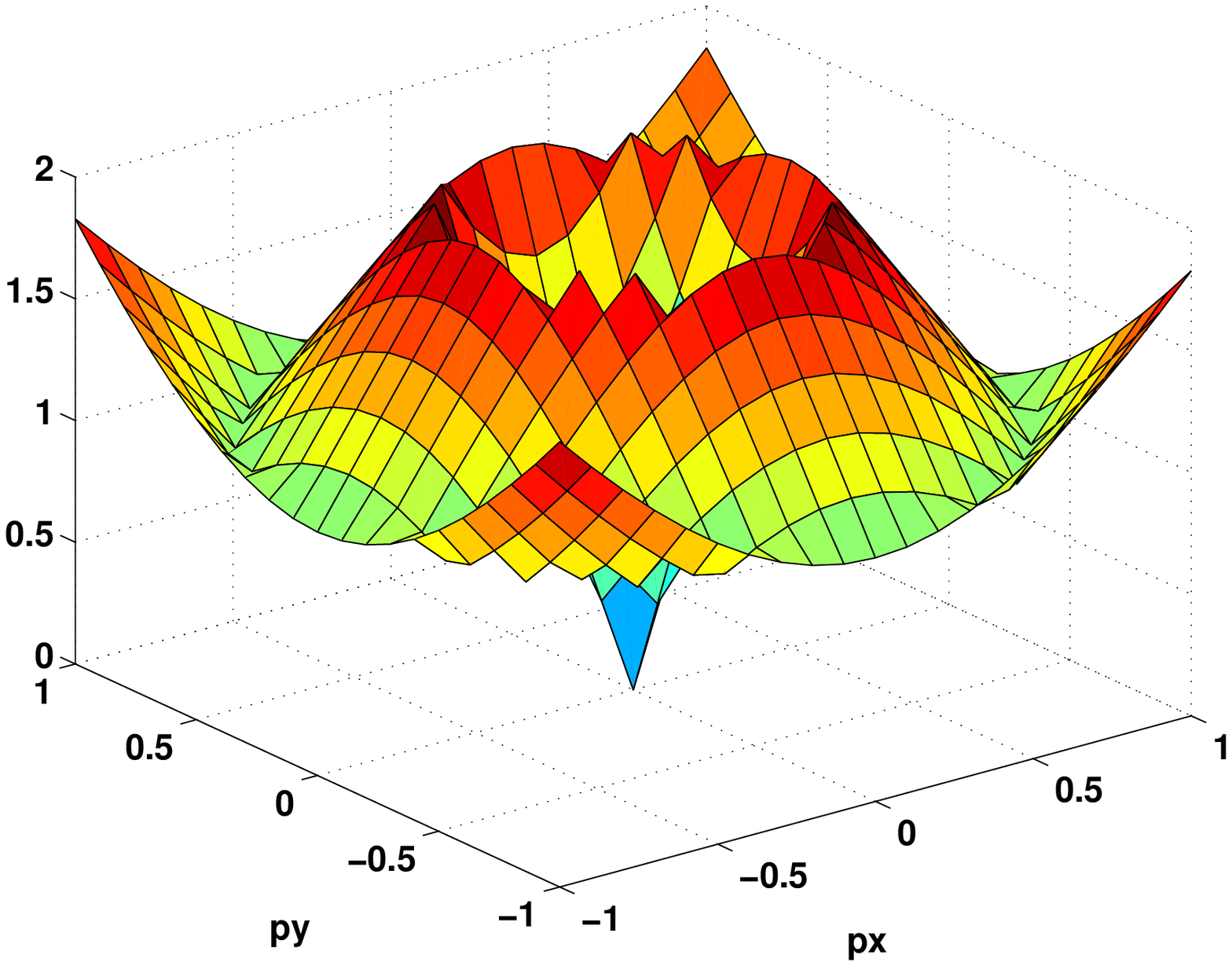}}
 (b){\includegraphics[scale=0.22]{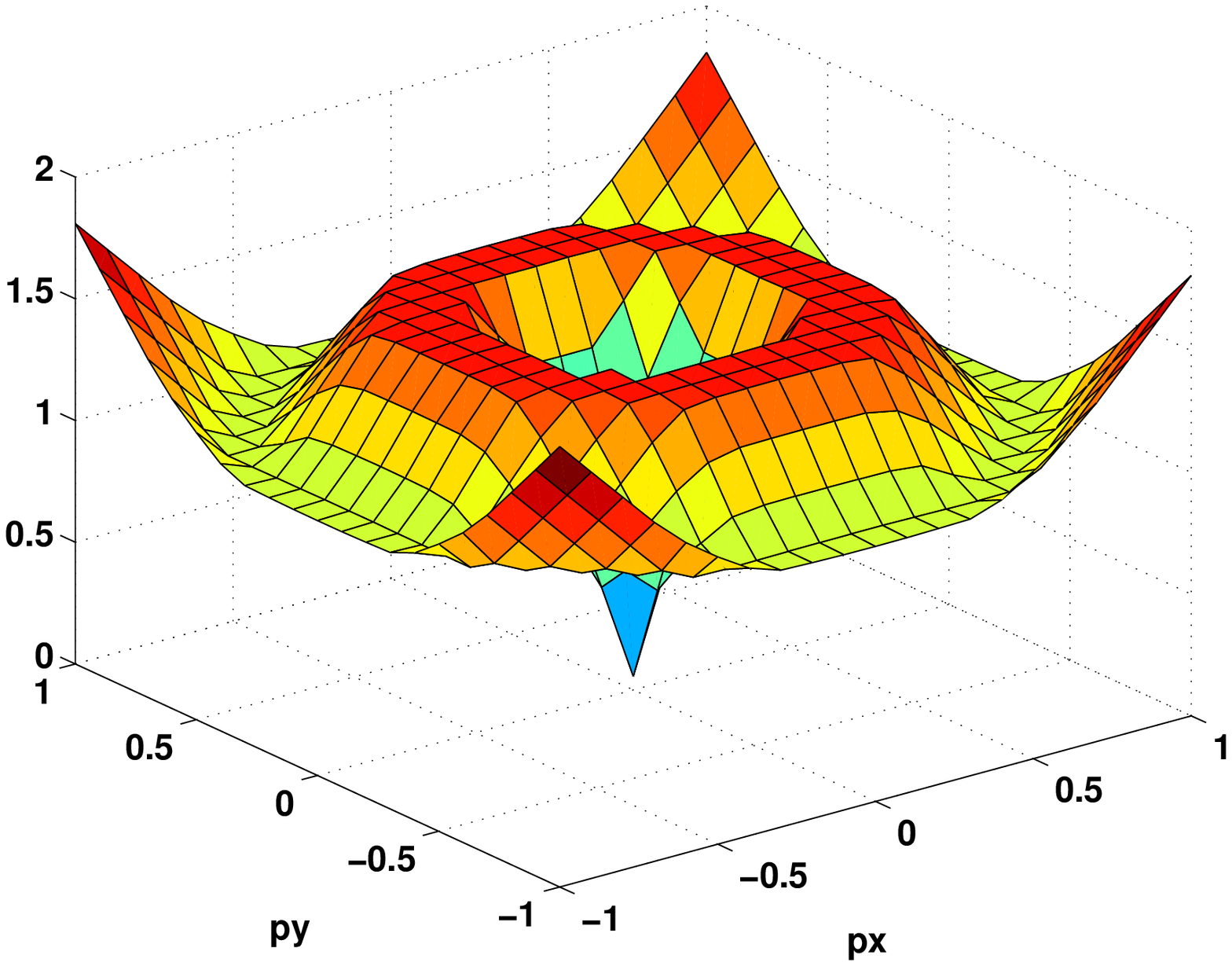}}
 (c){\includegraphics[scale=0.22]{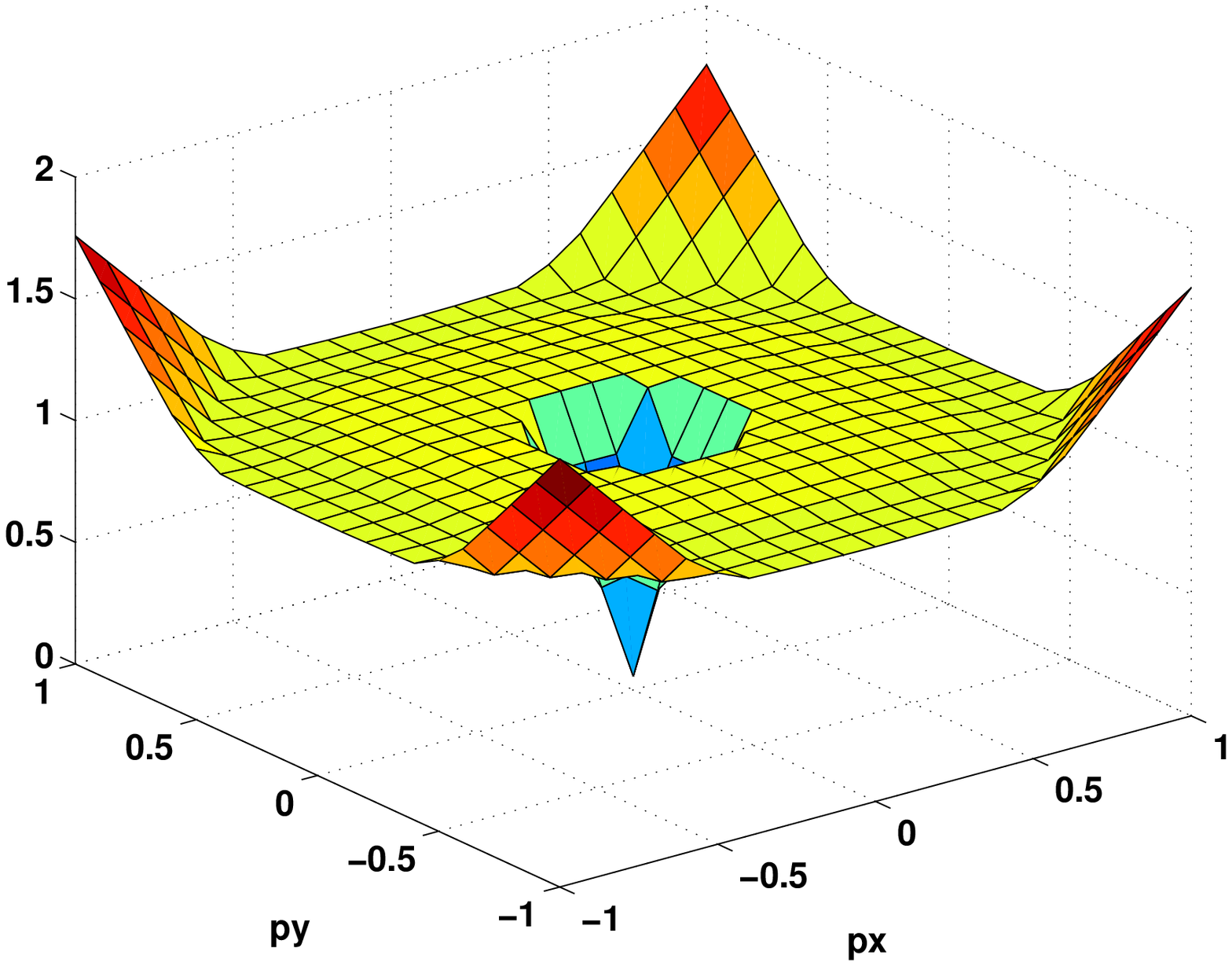}}\\
 (d){\includegraphics[scale=0.22]{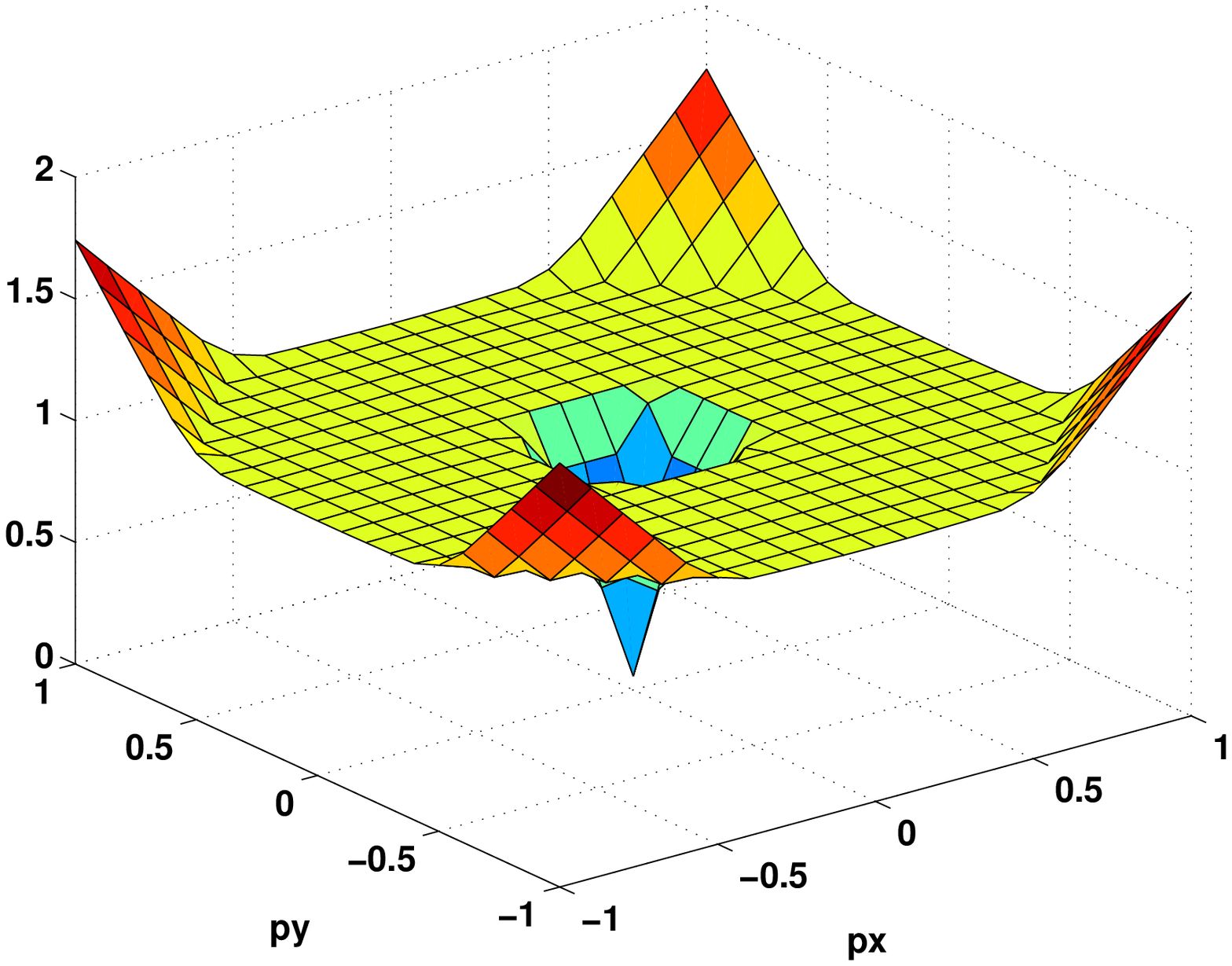}}
 (e){\includegraphics[scale=0.22]{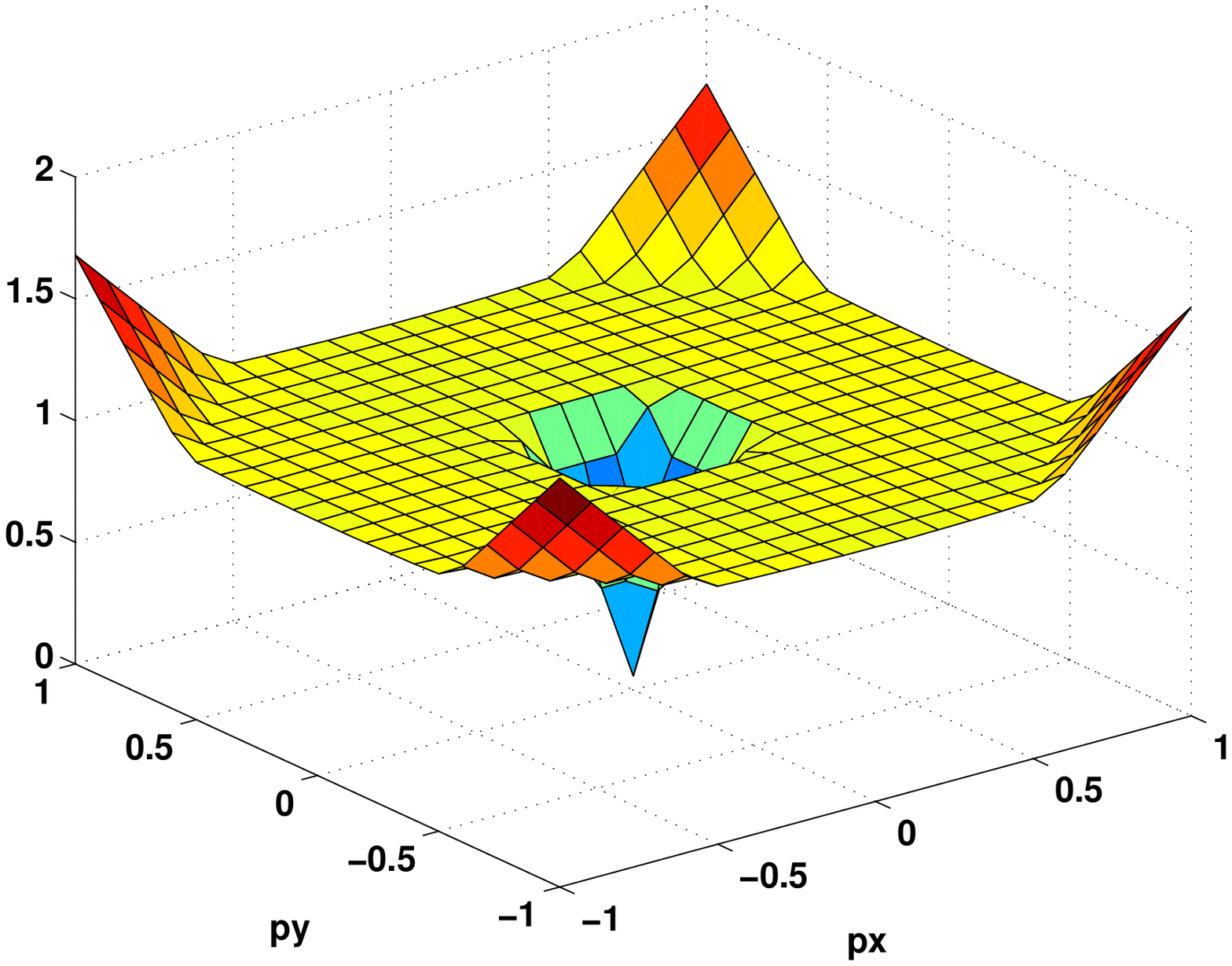}}\\
 \caption{(a) The original Hamiltonian: $S=0$. (b) $S=0.125$. (c) $S=0.25$. (d) $S=0.30$. (e) $S=0.50$.} 
\label{qtyEx2_1}
\end{figure}

However, we are only able to rigorously  verify this for level sets above $m_1$. This partially demonstrate the ``quasi-convexification" since the nonconvexity of the original $H$ appears on level sets between $m_1$ and $M_1$.   Denote $H_1=\max\{H,m_1\}$ and 
\begin{equation}\label{eq-H2}
H_2(p)=
\begin{cases}
H(p) \quad &\text{ for } |p| \leq s_1,\\
\max\{M_1,H(p)\} \quad &\text{ for } |p| \geq s_1.
\end{cases}
\end{equation}
Note that $H_1$ is a ``decomposable" nonconvex function from  Remark  \ref{rem:flat} and $H_2$ is quasiconvex.
Precisely speaking, 

\begin{thm} \label{thm:conj}
Assume that {\rm (H8)} holds. 
Let $H(p)=\varphi(|p|)$ for all $p\in \R^n$, and $V\in C(\T^n)$ be a given potential energy
such that 
$$
{\rm osc}_{\T ^n}V\geq  M_1-m_1.
$$
Let $\ol{H}$ be the effective Hamiltonian corresponding to $H(p)-V(x)$. Then for any $\mu\geq m_1$, the level set
$$
\{\ol H\leq \mu\}
$$
is convex.  
\end{thm}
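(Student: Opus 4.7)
The plan is to compare $\ol H$ with the effective Hamiltonian $\ol{H_1}$ of the truncated Hamiltonian $H_1:=\max\{H,m_1\}=\varphi_1(|\cdot|)$, where $\varphi_1=\max\{\varphi,m_1\}$. I will establish two claims: (i) $\ol{H_1}$ is globally quasiconvex under the oscillation hypothesis; (ii) for every $\mu\ge m_1$, the sublevel sets $\{\ol H\le\mu\}$ and $\{\ol{H_1}\le\mu\}$ coincide. Together, these immediately yield convexity of $\{\ol H\le\mu\}$.

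\textbf{Quasiconvexity of $\ol{H_1}$.} Let $r^\ast\in(0,s_1)$ be determined by $\varphi(r^\ast)=m_1$. Then $\varphi_1$ equals $m_1$ on $[0,r^\ast]$, strictly increases from $m_1$ to $M_1$ on $[r^\ast,s_1]$, strictly decreases from $M_1$ to $m_1$ on $[s_1,s_2]$, and strictly increases to $+\infty$ on $[s_2,\infty)$. After an arbitrarily small strictly increasing perturbation on $[0,r^\ast]$, this profile fits (H6) with $m=1$ and $\varphi_1(s_0)=\varphi_1(s_2)$, a configuration permitted by the relaxation in Remark \ref{rem:flat}(i). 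The hypothesis ${\rm osc}_{\T^n}V\ge M_1-m_1=\varphi_1(s_1)-\varphi_1(s_2)$ is exactly the threshold appearing in Remark \ref{rem:flat}(ii) for $m=1$, and so $\ol{H_1}$ is quasiconvex.

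\textbf{Level-set identification.} Normalize so that $\min_{\T^n}V=0$, which only shifts each effective Hamiltonian by a common constant. The key claim is that, for every $c>m_1$, a continuous function $v:\T^n\to\R$ solves $H(p+Dv)-V=c$ in the viscosity sense if and only if it solves $H_1(p+Dv)-V=c$. For subsolution tests with $q\in D^+v(x_0)$, either inequality passes through the truncation using $H\le H_1$ in one direction and $m_1<c\le c+V(x_0)$ in the other. For supersolution tests with $q\in D^-v(x_0)$, either hypothesis forces $H(p+q)\ge c+V(x_0)>m_1$, so $H(p+q)=H_1(p+q)$ and the two inequalities coincide. Uniqueness of the additive eigenvalue in the cell problem then yields $\ol H(p)=\ol{H_1}(p)$ whenever either side exceeds $m_1$; in particular $\{\ol H>m_1\}=\{\ol{H_1}>m_1\}$ with equal values there, and by taking complements $\{\ol H\le m_1\}=\{\ol{H_1}\le m_1\}$. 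Assembling the pieces gives $\{\ol H\le\mu\}=\{\ol{H_1}\le\mu\}$ for every $\mu\ge m_1$, and the convex sublevel sets of the quasiconvex function $\ol{H_1}$ complete the proof.

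\textbf{Main obstacle.} The substantive step is the quasiconvexity of $\ol{H_1}$: one must verify that the relaxed form of (H6), in which $\varphi_1$ has a flat initial segment on $[0,r^\ast]$ and a non-strict inequality $\varphi_1(s_0)=\varphi_1(s_2)$, is indeed covered by the decomposition machinery together with Remark \ref{rem:flat}(ii); this is what the approximation in Remark \ref{rem:flat}(i) is designed for. The viscosity equivalence itself is routine once the borderline case $c=m_1$ is excluded, and that value is bypassed via the strict inequality $c>m_1$ combined with continuity of $\ol H$ and $\ol{H_1}$.
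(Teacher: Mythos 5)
Your proposal is correct and follows essentially the same route as the paper: truncate at the level $m_1$ by setting $H_1=\max\{H,m_1\}$, invoke Remark \ref{rem:flat} (via Corollary \ref{cor:rep3}) to get quasiconvexity of $\ol{H}_1$ under ${\rm osc}_{\T^n}V\geq M_1-m_1$, and identify the sublevel sets $\{\ol H\le\mu\}=\{\ol H_1\le\mu\}$ for $\mu\ge m_1$ by passing cell-problem solutions at levels above $m_1$ between $H$ and $H_1$. The paper argues the identification slightly more economically (using only $\ol H\le\ol H_1$ and the implication $\ol H_1(p)>m_1\Rightarrow\ol H(p)=\ol H_1(p)$), but this is the same argument in substance.
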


\begin{figure}[h]
\begin{center}
\begin{tikzpicture}

\draw[->] (-0.5,0)--(8,0);
\draw[->] (0,-1)--(0,4);
\draw (8,-0.2) node {$r$};

\draw[dashed] (0,2)--(1,2);
\draw[dashed] (1,0)--(1,2);
\draw (1,-0.3) node{$s_1$};
\draw (-0.3,2) node{$M_1$};

\draw[dashed] (0,1)--(2,1);
\draw[dashed] (2,0)--(2,1);
\draw (2,-0.3) node{$s_2$};
\draw (-0.3,1) node{$m_1$};

\draw[thick] plot [smooth] coordinates {(0,0) (1,2)  (2,1) (3,4)};
\draw (3.2,4) node {$\varphi$};

\end{tikzpicture}
\caption{Graphs of $\varphi$}
\label{fig6}
\end{center}
\end{figure}
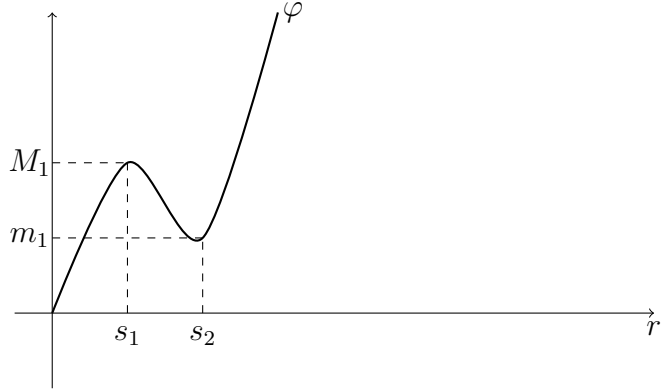

\begin{proof}
Without loss of generality,  we assume that $\min_{\T^n}V=0$. Let $\ol{H}_1$ be the effective Hamiltonian of $H_1-V$. 
Clearly $\ol{H}_1$ is quasiconvex by  Remark  \ref{rem:flat}.  So  it  suffices to show that for every $\mu>m_1$,  
$$
\ol H(p)=\mu   \quad \text{ if and only if}  \quad \ol H_1(p)=\mu.
$$
Since $\ol H\leq \ol H_1$,  we only need to show that  for fixed  $p \in \R^n$, if $\ol H_1(p)>m_1$, then $\ol H(p)=\ol H_1(p)$.   In fact,  let $v(x,p)$ be a solution to
\[
H_1(p+Dv) - V = \ol{H}_1(p)> m_1 \quad \text{ in } \T^n.
\]
Note that $\ol{H}_1(p)+V>m_1$.
It is straightforward that $v$ is also a solution to 
\[
H(p+Dv) - V = \ol{H}(p) \quad \text{ in } \T^n.
\]
Thus, $\ol{H}_1(p)=\ol{H}(p)$. The proof is complete.
\end{proof}

\begin{rem}\label{rem:conj}  Here is an interesting transition between min-max decomposition,  evenness and quasi-convexification when $n=1$. Assume $\min_{\T} V =0$.  

$\bullet$ If  $c_0=\max_{\T} V< M_1-m_1$,  it is not hard to  obtain a representation formula for $\ol{H}$ (``conditional decomposition")
\begin{equation}\label{rep-min}
\ol{H} = \min \left\{\ol{H}_1, \ol{H}_2 \right\}.
\end{equation}
Here, $\ol{H}_1$ falls into the category of item (i) in Remark \ref{rem:flat}.
And $\ol H_2$ is the effective Hamiltonian associated with the quasiconvex $H_2$ in (\ref{eq-H2}).  In particular,  $\ol H$ is {\bf even but not quasiconvex}.   The shape of $\ol H$ is qualitatively similar to that of $H$.  It is not clear to us whether this decomposition formula holds when $n\geq 2$.  The key is to answer Question 3 in the appendix first.  

\smallskip

$\bullet$  If $c_0=\max_{\T} V=M_1-m_1$,  $\ol H$ {\bf is both even and quasiconvex}. 

\smallskip

$\bullet$ If $c_0=\max_{\T} V>M_1-m_1$ and $V_s(x)=c_0\min\left\{{x\over s},  {1-x\over 1-s}\right\}$  for $x\in  [0,1]$ (extend $V$ to $\R$ periodically),  
then $\ol H$ {\bf is quasiconvex but  loses evenness}.  More precisely,  by adapting Step 1 in the proof of Theorem 1.4 in \cite{LTY},  we can show that the level set $\left\{\ol H\leq \mu\right\}$ is not even for any $\mu\in  (M_1-c_0, m_1)$ if $s\not={1\over 2}$. Hence the above  formula or decomposition \eqref{rep-min} no longer holds.

See also  Figure  \ref{qtyEx3_2} below for numerical computations of a specific example.  
\end{rem}

\noindent{\bf Numerical example 2.}  Let $n=1$. We consider the following setting
\begin{align*}
&H(p) = \min\left\{4 |p|, 2|\, |p|-1|+1\right\}, \quad &\text{ for } p \in \R,\\
&V(x)= S*\min\left\{3 x, \frac{3}{2}(1-x)\right\}, \quad &\text{ for } x \in [0,1],
\end{align*}
and extend $V$ to $\R$ in a periodic way. 
The constant $S$ serves as the scaling parameter to increase or decrease the effect of the potential energy $V$.  
For this case, $c_0=1$ and $M_1-m_1=1$.  

The Lax-Friedrichs based big-T method  is used to compute this effective Hamiltonian. 
The $p$ domain $[-1,1]$ is sampled by $41$ mesh points. 
 The computational $x$ domain $[0,1]$ is discretized by $401$ mesh points.
The initial condition for the big-T method is taken to be $\cos(2\pi x)$. 
The results are shown in Figure \ref{qtyEx3_2}.

\begin{figure}[htb]
 \centering
  (a){\includegraphics[scale=0.22]{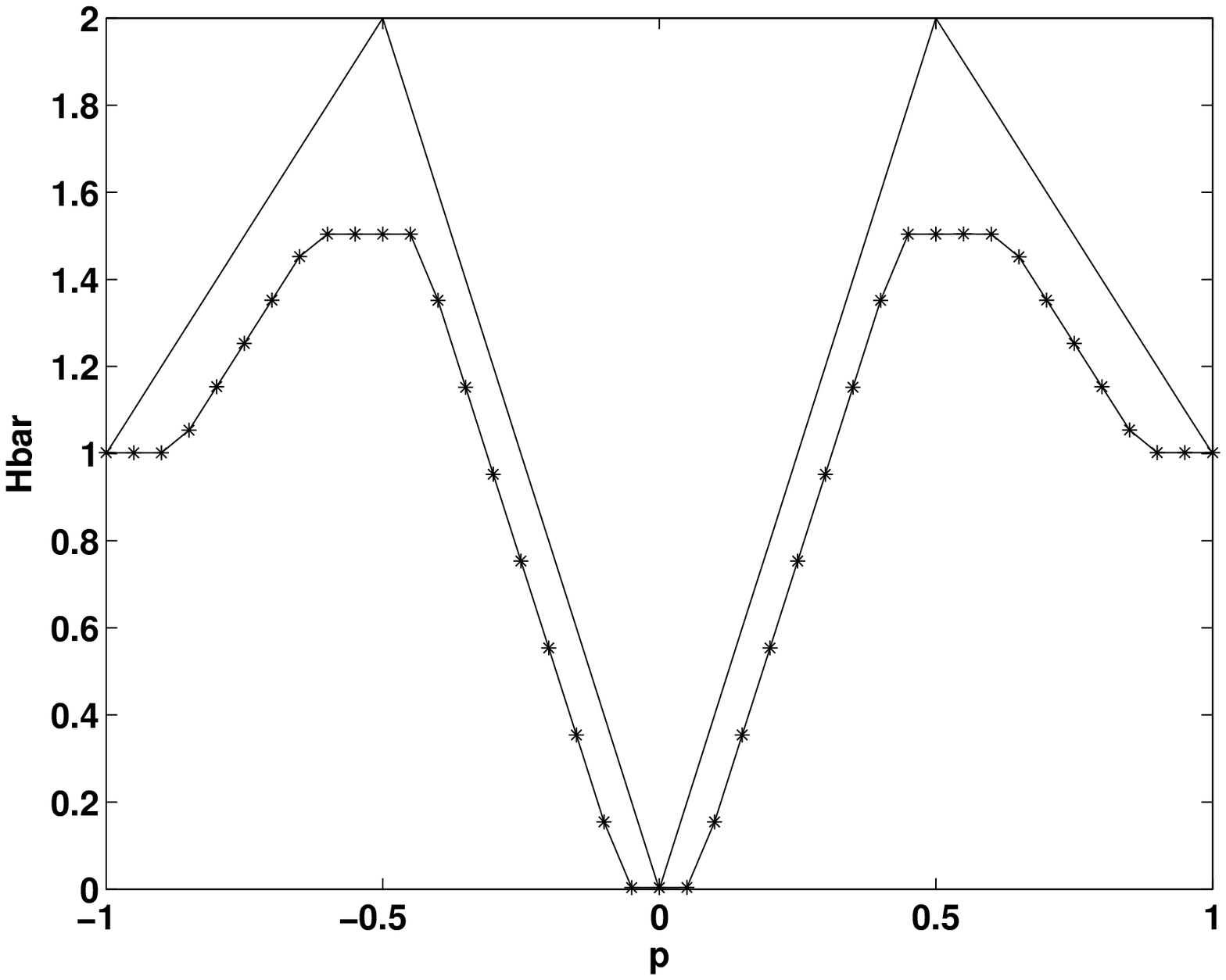}}
  (b){\includegraphics[scale=0.22]{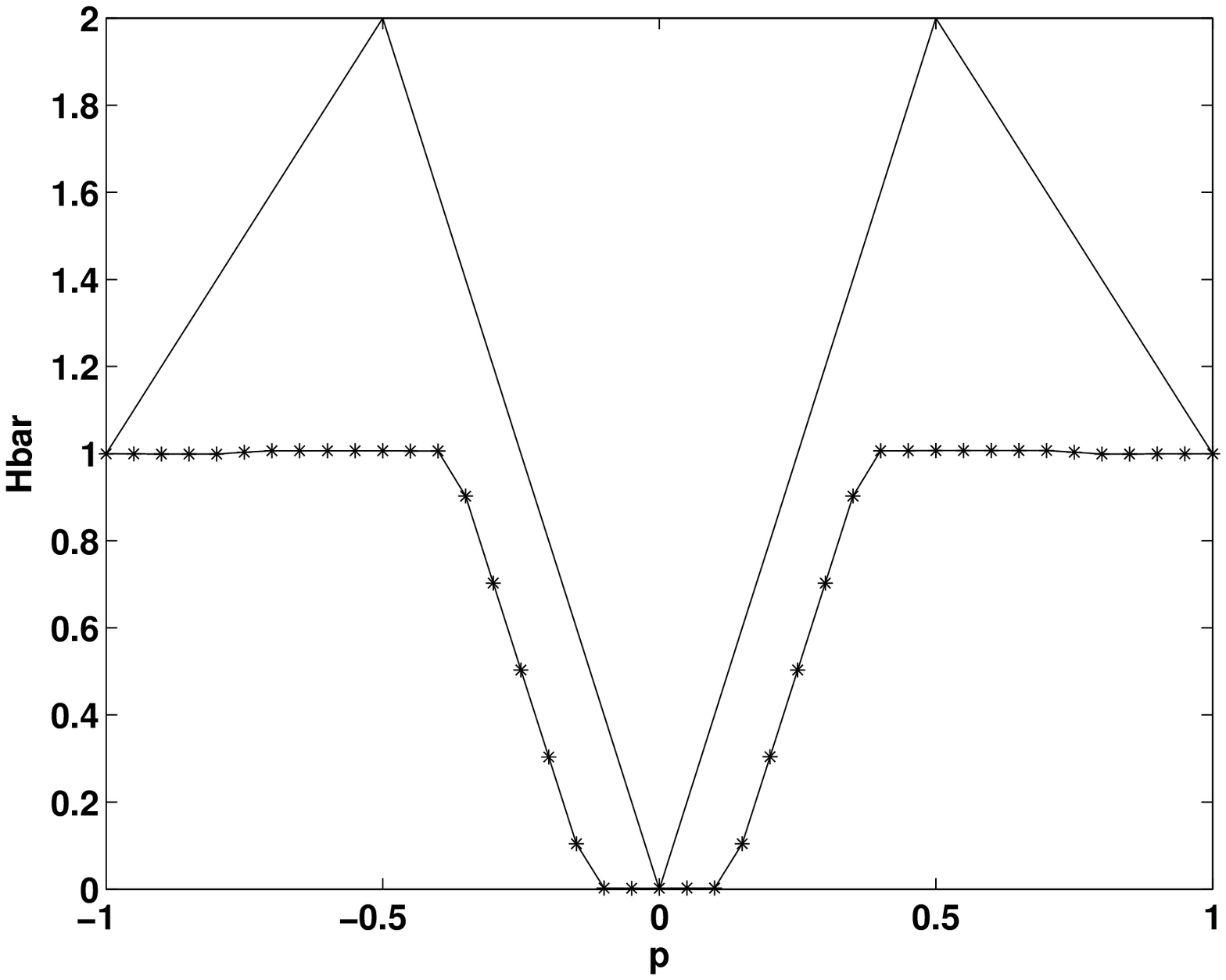}}
  (c){\includegraphics[scale=0.22]{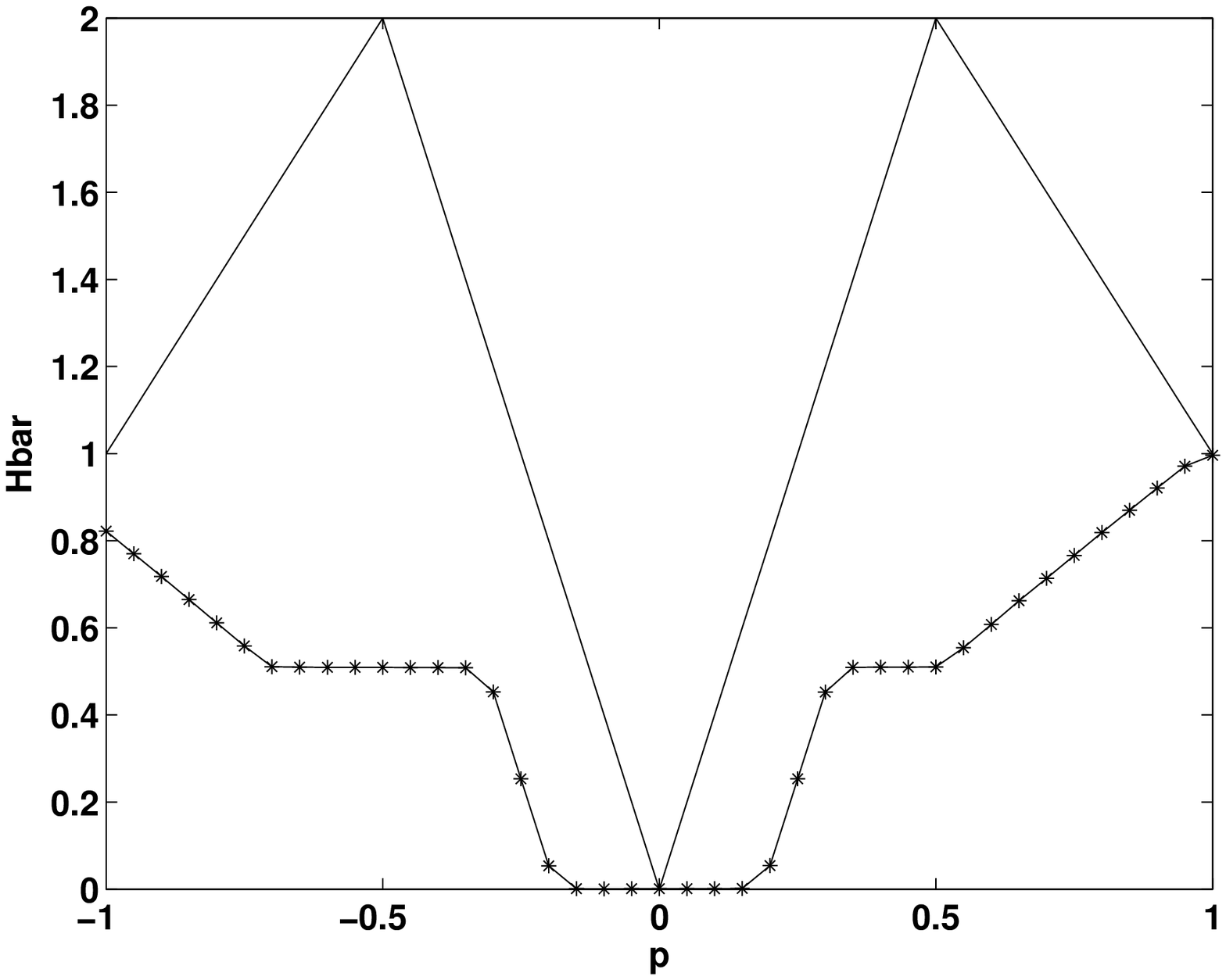}} 
  \caption{1-D case. (a): $S=0.5$. (b): $S=1.0$. (c): $S=1.50$.} 
\label{qtyEx3_2}
\end{figure}

\subsection{A double-well type Hamiltonian} \label{sec:num}  Let $n=2$. We consider a prototypical example
$$
H(p)=\min \left\{|p-e_1|, \  |p+e_1|\right\} \quad \text{ for } p=(p_1,p_2)\in \R^2,
$$
where $e_1=(1,0)$. The shape of $\ol H$ is more sensitive to the structure of the potential $V$ instead of just the oscillation. 

\subsubsection{A unstable potential} We consider the following situation
\begin{equation}\label{eq-dw}
\begin{cases}
H(p)=\min \left\{|p-e_1|, \  |p+e_1|\right\} \quad &\text{ for } p=(p_1,p_2)\in \R^2,\\
V(x)=S *(1+\sin 2\pi x_1)(1+\sin 2\pi x_2) \quad &\text{ for } x=(x_1,x_2) \in \T^2.
\end{cases}
\end{equation}
The constant $S>0$ serves as a scaling parameter to adjust the  oscillation of the potential $V$.  
Note that $V$ attains its minimum along lines $x_1=-1/4+\Z$ and $x_2=-1/4+\Z$, which is clearly not a stable situation.

For this  kind  of nonconvex Hamiltonian $H$ and $V$, complete quasi-convexification does not occur.  
However, we still see  that  $\overline H$ eventually becomes a  ``less nonconvex" function. 
Let $\overline H(p,S)$ be the effective Hamiltonian corresponding to $H(p)-V(x)$.   We have that 
\begin{thm}\label{thm:limit}
Assume that \eqref{eq-dw} holds. Then
\[
\lim_{S\to \infty}\overline H(p,S)=\max\left\{|p_2|, \  \min \{|p_1-1|, \  |p_1+1|\}\right\} \quad \text{ for } p=(p_1,p_2)\in \R^2.
\]
\end{thm}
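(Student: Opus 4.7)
Write $L_1 := \T \times \{-1/4\}$, $L_2 := \{-1/4\} \times \T$, $L := L_1 \cup L_2 = \{V = 0\}$, and
\[
F(p) := \max\bigl\{|p_2|,\ \min(|p_1 - 1|, |p_1 + 1|)\bigr\}.
\]
The plan is to bracket $\ol H(p, S)$ between $F(p)$ and $F(p) + o(1)$ as $S \to \infty$, proving the two inequalities by separate arguments.

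\textbf{Lower bound $\ol H(p,S)\ge F(p)$, valid for every $S\ge 0$.} Let $v_S$ be a continuous viscosity solution of the cell problem $H(p + Dv_S) - V = c_S$ on $\T^2$ with $c_S := \ol H(p,S)$. The restriction $\tilde v_S(x_1) := v_S(x_1, -1/4)$ is $1$-periodic in $x_1$ and attains a maximum at some $x_1^\ast$. Since $V \equiv 0$ on $L_1$, we apply the 2D viscosity subsolution inequality at $(x_1^\ast, -1/4)$ using a quadratic test function of the form $\tilde \phi(x_1) + w^\ast (x_2 + 1/4) + A(x_2 + 1/4)^2$---after first regularizing $v_S$ by sup-convolution to secure $x_2$-differentiability, taking $\tilde\phi$ to touch $\tilde v_S$ from above at $x_1^\ast$ with zero slope (possible since $x_1^\ast$ is a max), letting $w^\ast$ be the $x_2$-partial of the regularization, and choosing $A$ large to dominate the Lipschitz behavior in $x_2$. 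Passing to the limit in the regularization parameter, we obtain $H\bigl((p_1, p_2 + w^\ast)\bigr) \le c_S$. Since $H((p_1, p_2 + w)) \ge \min(|p_1 - 1|, |p_1 + 1|)$ for every $w \in \R$ (the bound being the distance from $(p_1,p_2+w)$ to the nearest well along the $x_1$-axis), this yields $c_S \ge \min(|p_1 - 1|, |p_1 + 1|)$. The symmetric argument on $L_2$, using $H((p_1 + z, p_2)) \ge |p_2|$ for every $z \in \R$, gives $c_S \ge |p_2|$. Combining, $c_S \ge F(p)$.

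\textbf{Upper bound.} For each $\epsilon > 0$, we construct a Lipschitz periodic $\phi_\epsilon \in C(\T^2)$ (independent of $S$) with $H(p + D\phi_\epsilon(x)) \le F(p) + \epsilon$ in an open neighborhood of $L$, and with $H(p + D\phi_\epsilon)$ uniformly bounded on $\T^2$. The construction places $p + D\phi_\epsilon$ inside $B(e_1, F(p) + \epsilon) \cup B(-e_1, F(p) + \epsilon)$ on a neighborhood of $L$, with the component of $D\phi_\epsilon$ normal to $L_1$ chosen so that the constraint there reduces to $|p_1 + \partial_{x_1}\phi_\epsilon \mp 1| \le F(p) + \epsilon$ (e.g. by arranging $\partial_{x_2}\phi_\epsilon = -p_2$ along $L_1$), and symmetrically on $L_2$. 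The transitions of $D\phi_\epsilon$ between the two wells are placed strictly inside $\T^2 \setminus L$. Since $V \ge \eta > 0$ on any prescribed compact subset of $\T^2 \setminus L$ for $S$ large, and since the local contribution $H(p + D\phi_\epsilon)$ at a well-transition is bounded by an absolute constant, the inequality $H(p + D\phi_\epsilon) - V \le F(p) + \epsilon$ then holds on all of $\T^2$. By the standard comparison principle for cell problems this gives $\ol H(p, S) \le F(p) + \epsilon$ for $S$ large, completing the sandwich.

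\textbf{Main difficulty.} The hardest step is the upper-bound construction. The simultaneous requirements---periodicity of $\phi_\epsilon$, mean-zero gradient along both $L_1$ and $L_2$, and avoiding kinks of $\phi_\epsilon$ on $L$ (where $V = 0$ cannot absorb the large value of $H$ at a well-transition)---force a careful choice of transition geometry: any interface representing a nontrivial homology class in $\T^2$ must cross $L$, so the well-interfaces must be contractible and the two phases must transition through bounded high-potential regions inside a single fundamental domain. The algebraic identity defining $F(p)$ as a $\max$ of the two factors $|p_2|$ and $\min(|p_1 \mp 1|)$ arises precisely from the compatibility condition at the crossing point $L_1 \cap L_2 = \{(-1/4, -1/4)\}$, where the two reduced 1D constraints must agree; in the borderline regime where both factors are active, a genuine two-variable interpolation (rather than a separable ansatz $\phi_\epsilon(x_1) + \phi_\epsilon(x_2)$) is required.
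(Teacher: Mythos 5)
Your overall strategy is the same as the paper's: for the lower bound, restrict a (regularized) solution of the cell problem to the two lines where $V$ vanishes, use $H(q)\ge \min(|q_1-1|,|q_1+1|)$ and $H(q)\ge |q_2|$, and kill the tangential derivative at a maximum point of the one-dimensional restriction; for the upper bound, exhibit a fixed $\phi$ with $H(p+D\phi)\le F(p)+\epsilon$ near $\{V=0\}$ and let $S\to\infty$ in $\ol{H}(p,S)\le \max_x\left(H(p+D\phi)-V\right)$. (For the $|p_2|$ bound the paper instead integrates along $\{-1/4\}\times\T$ and applies Jensen's inequality; your second maximum-point argument is a fine variant.) The problem is the key technical step of your lower bound. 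You propose to test the viscosity subsolution property \emph{exactly} at $(x_1^\ast,-1/4)$ with the function $\tilde\phi(x_1)+w^\ast(x_2+1/4)+A(x_2+1/4)^2$, claiming that a large $A$ ``dominates the Lipschitz behavior in $x_2$''. It does not: the discrepancy $v(x_1,x_2)-v(x_1,-1/4)$ is of size $O(|x_2+1/4|)$, a linear quantity that no quadratic penalty can absorb near the touching point. Moreover, sup-convolution gives semi-convexity, hence a nonempty subdifferential everywhere, but a nonempty \emph{super}differential (which is what a subsolution test requires) only at points of differentiability, which need not include $(x_1^\ast,-1/4)$; so ``the $x_2$-partial of the regularization'' $w^\ast$ may not even exist there, and the inequality $H((p_1,p_2+w^\ast))\le c_S$ is not established. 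The paper closes precisely this hole: sup-convolve to get a semi-convex approximate subsolution, use Fubini to pass to a nearby horizontal line on which it is differentiable a.e.\ (accepting a small error since $V$ is merely small, not zero, there), use the a.e.\ pointwise inequality on that line, and then use semi-convexity to see that the tangential derivative vanishes (through differentiability points) at the maximum of the restriction. Your argument needs this, or an equivalent device, to be complete.

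A secondary remark on your ``main difficulty'' discussion of the upper bound: no transition between the two wells of $H$ is needed, and a separable ansatz suffices. Along $\{V=0\}$ one may keep $p+D\phi$ inside a single ball $B(\sigma e_1, F(p)+\epsilon)$, where $\sigma$ is the sign minimizing $|p_1-\sigma|$; taking $\phi=f(x_1)+g(x_2)$ with $f',g'$ periodic and mean-zero, $p_1+f'(-1/4)=\sigma$, $g'(-1/4)=-p_2$, $|p_1+f'-\sigma|\le F(p)+\epsilon$ and $|p_2+g'|\le F(p)+\epsilon$, is possible precisely because $\min(|p_1-1|,|p_1+1|)\le F(p)$ and $|p_2|\le F(p)$. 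So the $\max$ structure of $F$ comes from the two mean-zero (periodicity) constraints on the two lines separately, not from a compatibility condition at their crossing, and no homological obstruction arises. This does not affect the validity of your upper bound, which is essentially the paper's.
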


\begin{proof}
We first show that for any $S>0$,  
\begin{equation}\label{eq:oneside}
\overline H(p,S)\geq \max\left\{|p_2|, \  \min \{|p_1-1|, \  |p_1+1|\}\right\}.
\end{equation}
Let $v$ be a viscosity solution to 
\[
H(p+Dv)-V(x)=\overline H(p,S)   \quad \text{in $\T ^2$}.
\]
Without loss of generality,  suppose that $v$ is semi-convex and  differentiable at $(x_1,  -1/4)$ for a.e.  $x_1\in  \R$.  
Otherwise,  we may use super-convolution to get a subsolution and look at a nearby line by Fubini's theorem.  Accordingly,  for a.e. $x_1\in  \R$,
\[
\overline H(p,S)  \geq \min \left\{|p_1+v_{x_1}(x_1,-1/4)-1|,  \  |p_1+v_{x_1}(x_1,-1/4)+1|\right\}.
\]
Assume that $x_1 \mapsto v(x_1,-1/4)$ attains its maximum at $x_0\in  \R$.  
Then $v_{x_1}(x_0,-1/4)=0$ due to the semi-convexity of $v$.  Hence 
\[
\overline H(p,S)\geq  \min \{|p_1-1|, \  |p_1+1|\}. 
\]
Now, similarly,  we can show that 
\[
\overline H(p,S)\geq |p_2+v_{x_2}( -1/4, x_2)|   \quad \text{for a.e. $x_2\in  \T$}.
\]
Taking the integration on both side over $[0,1]$
and using Jensen's inequality, we derive $\overline H(p,S)\geq |p_2|$.  
Thus,  (\ref{eq:oneside}) holds.

Next we show that 
\begin{equation}\label{eq:otherside}
\lim_{S\to +\infty}\overline H(p,S)\leq \max\left\{|p_2|, \  \min \{|p_1-1|, \  |p_1+1|\}\right\}.
\end{equation}
In fact, for any $\delta>0$,  it is not hard to construct $\phi\in C^1(\Bbb T^2)$ such that 
\begin{multline*}
H(p+D\phi)\leq \max\left\{|p_2|, \  \min \{|p_1-1|, \  |p_1+1|\}\right\}+\delta \\
 \text{ on $\left(\T \times \left\{-1/4\right\}\right) \bigcup \left(\left\{-1/4\right\}\times\T\right)$}.
\end{multline*}
Clearly,  
\[
\overline H(p,S)\leq \max_{x\in \T^2}  \left(H(p+D\phi(x))-S(1+\sin 2\pi x_1)(1+\sin 2\pi x_2)\right).
\]
Sending $S\to \infty$ and then $\delta\to 0$,  we obtain (\ref{eq:otherside}).

\end{proof}

\begin{rem}  Write $F_{\infty}(p)=\max\left\{|p_2|, \  \min \{|p_1-1|, \  |p_1+1|\}\right\}$.  Simple computations show that 

$\bullet$ For $r\in  [0, 1)$,   $\{F_{\infty}=r\}$ consists of two disjoint squares centered at $(\pm 1, 0)$ respectively.  

$\bullet$ For $r=1$,   $\{F_{\infty}=1\}$ consists of two adjacent squares centered at $(\pm 1, 0)$ respectively.

$\bullet$ For $r>1$,  $\{F_{\infty}=r\}$  is a rectangle centered at the origin.  

See Figure \ref{fig9} below.   
We can say that $F_{\infty}(p)$ looks more convex than the original Hamiltonian $H(p)=\min \{|p-e_1|, \  |p+e_1|\}$. 
\end{rem}

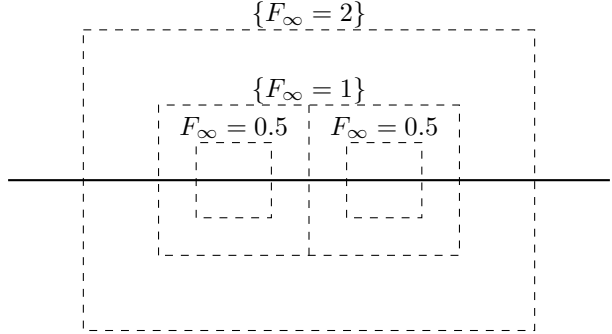
\begin{figure}[h]
\begin{center}
\begin{tikzpicture}

\draw[dashed](-1.5, -0.5)--(-1.5, 0.5)--(-0.5, 0.5)--(-0.5,-0.5)--(-1.5, -0.5);
\draw[dashed](0.5, -0.5)--(0.5, 0.5)--(1.5, 0.5)--(1.5,-0.5)--(0.5, -0.5);
\draw[dashed](-2,-1)--(-2,1)--(2,1)--(2,-1)--(-2,-1);
\draw[dashed](0,1)--(0,-1);
\draw[dashed](-3,2)--(3,2)--(3,-2)--(-3,-2)--(-3,2);
\draw (0,2.2) node {\footnotesize $\{F_{\infty}=2\}$};
\draw (0, 1.2) node {\footnotesize $\{F_{\infty}=1\}$};
\draw (1,0.7)  node {\footnotesize $F_{\infty}=0.5$};
\draw (-1,0.7)  node {\footnotesize $F_{\infty}=0.5$};
\draw[thick](-4,0)--(4,0);

\end{tikzpicture}
\caption{Level curves of $F_{\infty}$}
\label{fig9}
\end{center}

\end{figure}

\smallskip

\subsubsection{A stable potential} 
We consider the following 
\begin{equation}\label{eq-dw2}
\begin{cases}
H(p)=\min \left\{|p-e_1|, \  |p+e_1|\right\} \quad &\text{ for } p=(p_1,p_2)\in \R^2,\\
V(x)=S * \left(\sin^2 (2\pi x_1)+\sin^2 (2\pi x_2)\right) \quad &\text{ for } x=(x_1,x_2) \in \T^2.
\end{cases}
\end{equation}
The constant $S>0$ serves as a scaling parameter to adjust the  oscillation of the potential $V$.  
 Note that $V$ attains its minimum at points $(x_1,x_2)$ such that $2(x_1,x_2)\in \Z^2$, which is  a stable situation.

For this kind of potential,  it is easy to show that 
$$
\lim_{S\to \infty}\ol H(p,S)=0  \quad \text{locally uniformly in $\R^2$}.
$$
More interestingly,  numerical computations (Figures \ref{qtyEx1_2_weno3a} and \ref{qtyEx1_2_weno3b}) 
below suggest that  $\ol H$ becomes quasiconvex at least when $S\geq 1$. 

\smallskip

\begin{quest} \label{quest2}
Assume that \eqref{eq-dw2} holds.
Does there exist $L$ such that when $S>L$,  $\ol H$ is quasiconvex?  

\end{quest}

\smallskip

\noindent{\bf Numerical example 3.}  We consider setting \eqref{eq-dw2}.
 The constant $S$ serves as the scaling parameter to increase or decrease the effect of the potential. 

We use the Lax-Friedrichs based big-T method to compute the effective Hamiltonian. 
The computation for time-dependent Hamilton-Jacobi equations is done by using the LxF-WENO 3rd-order scheme. The initial condition for the big-T method is taken to be $\cos(2\pi x_1)\sin(2\pi x_2)$.  See Figure \ref{qtyEx1_2_weno3a}  and Figure \ref{qtyEx1_2_weno3b} for results. 

\begin{figure}[htb]
 \centering
 (a){\includegraphics[scale=0.22]{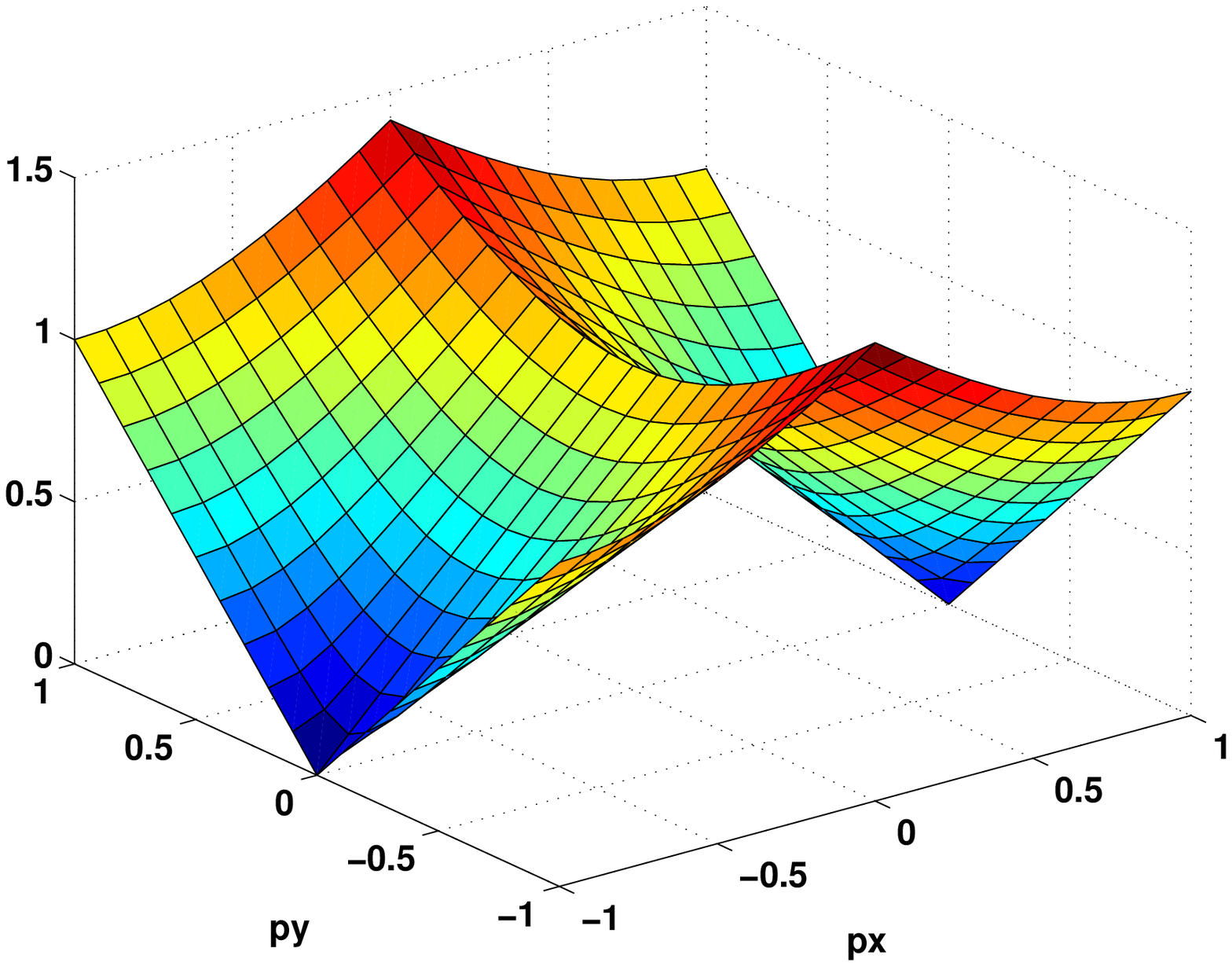}}
 (b){\includegraphics[scale=0.22]{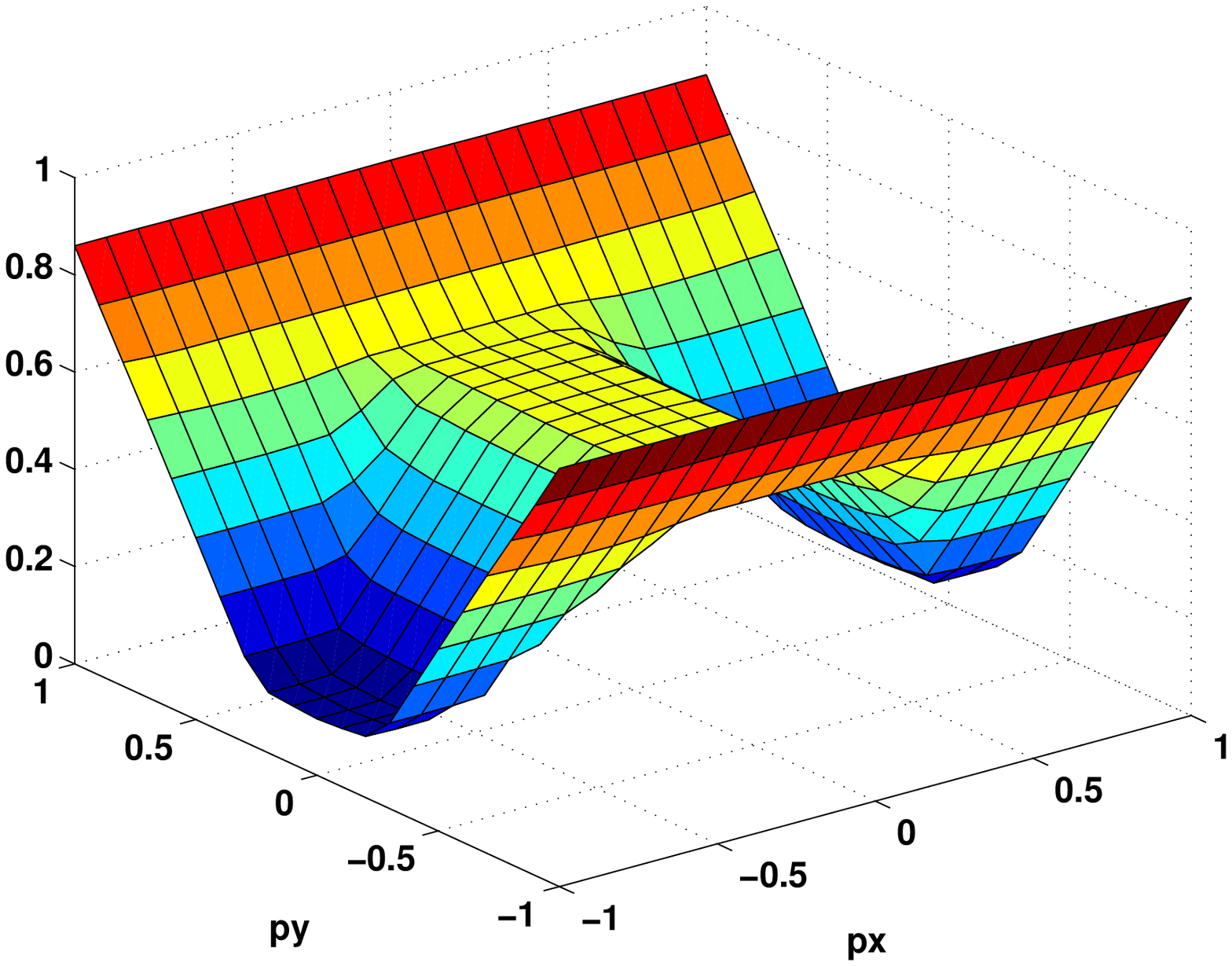}}
 (c){\includegraphics[scale=0.22]{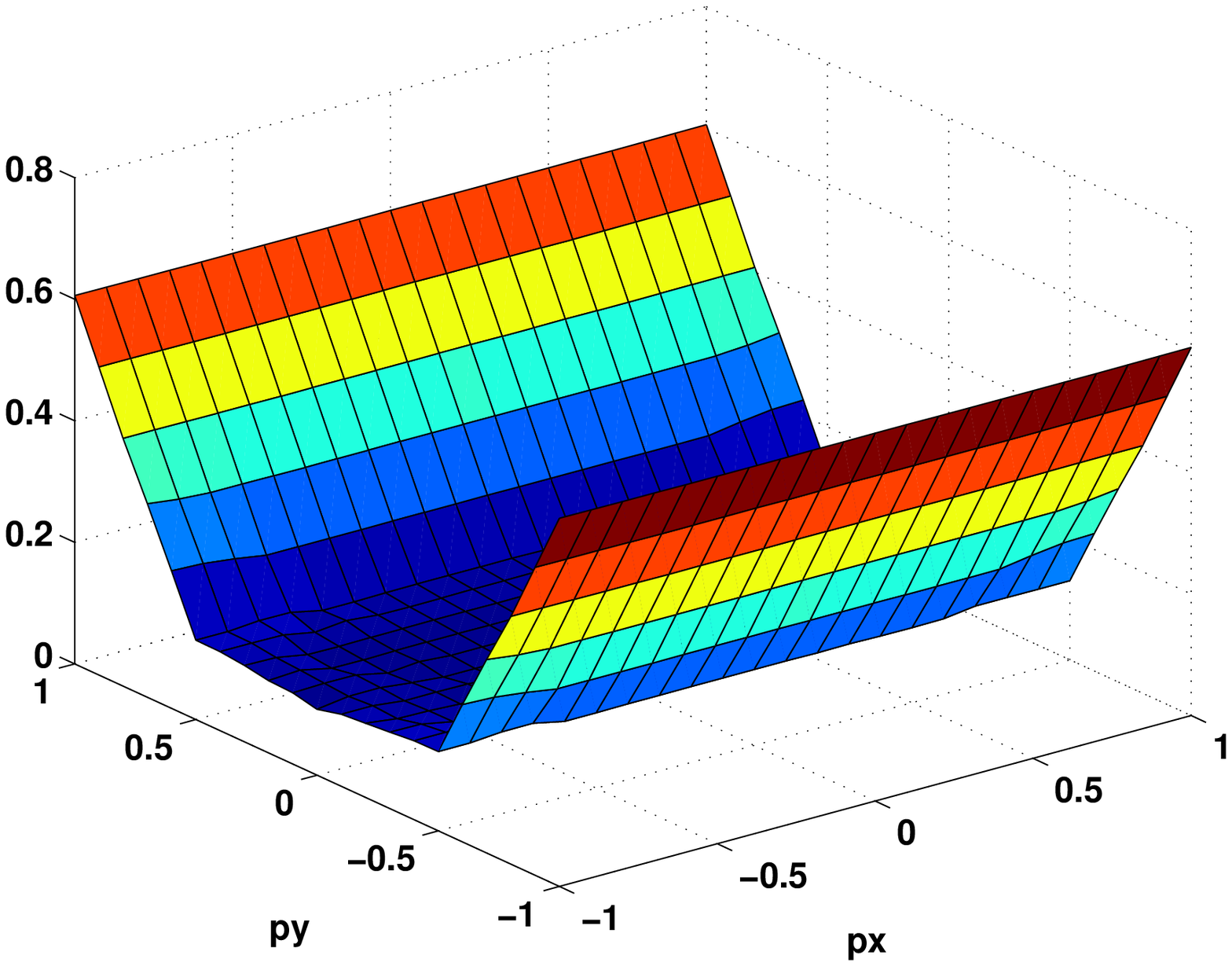}}
 (d){\includegraphics[scale=0.22]{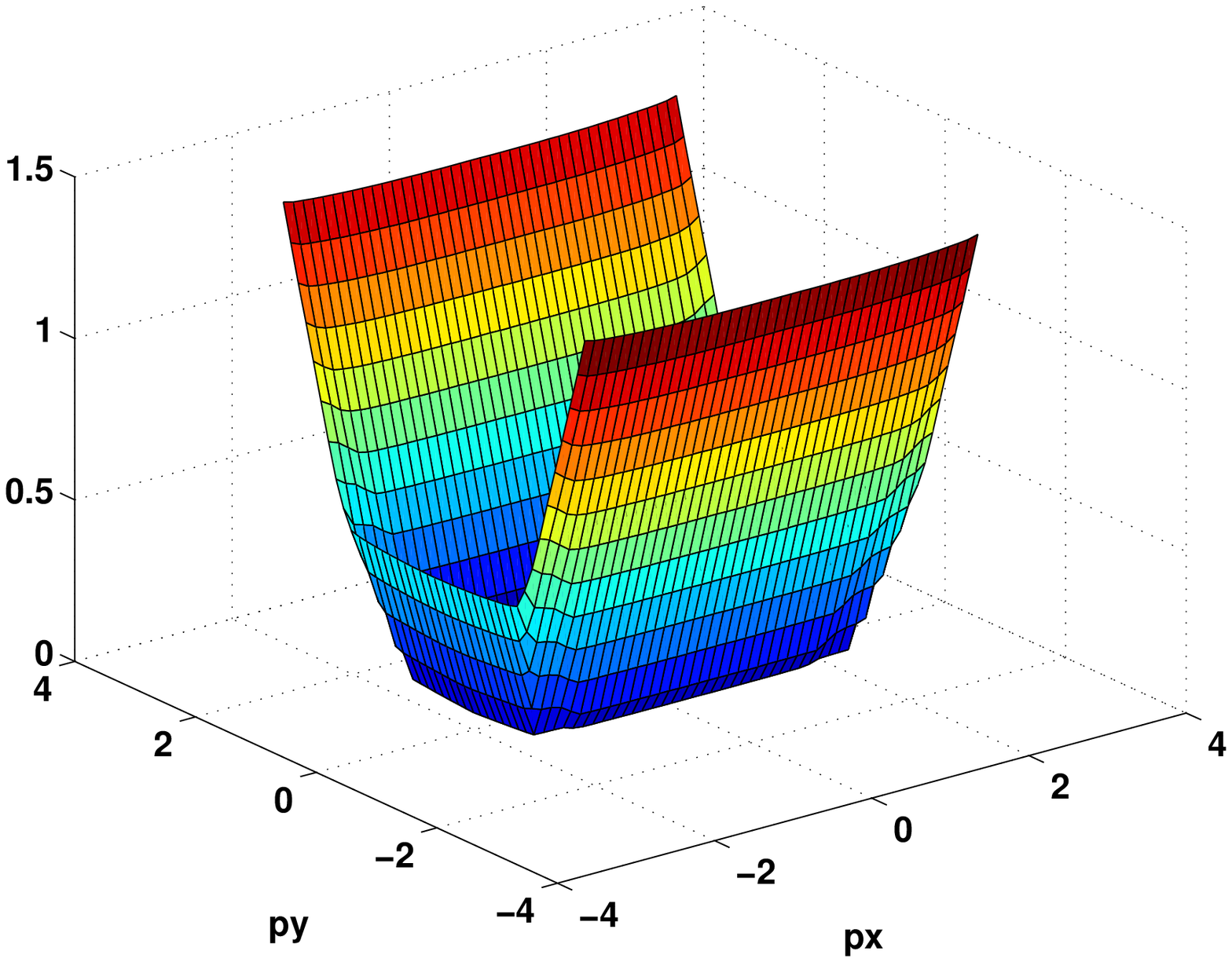}}
 (e){\includegraphics[scale=0.22]{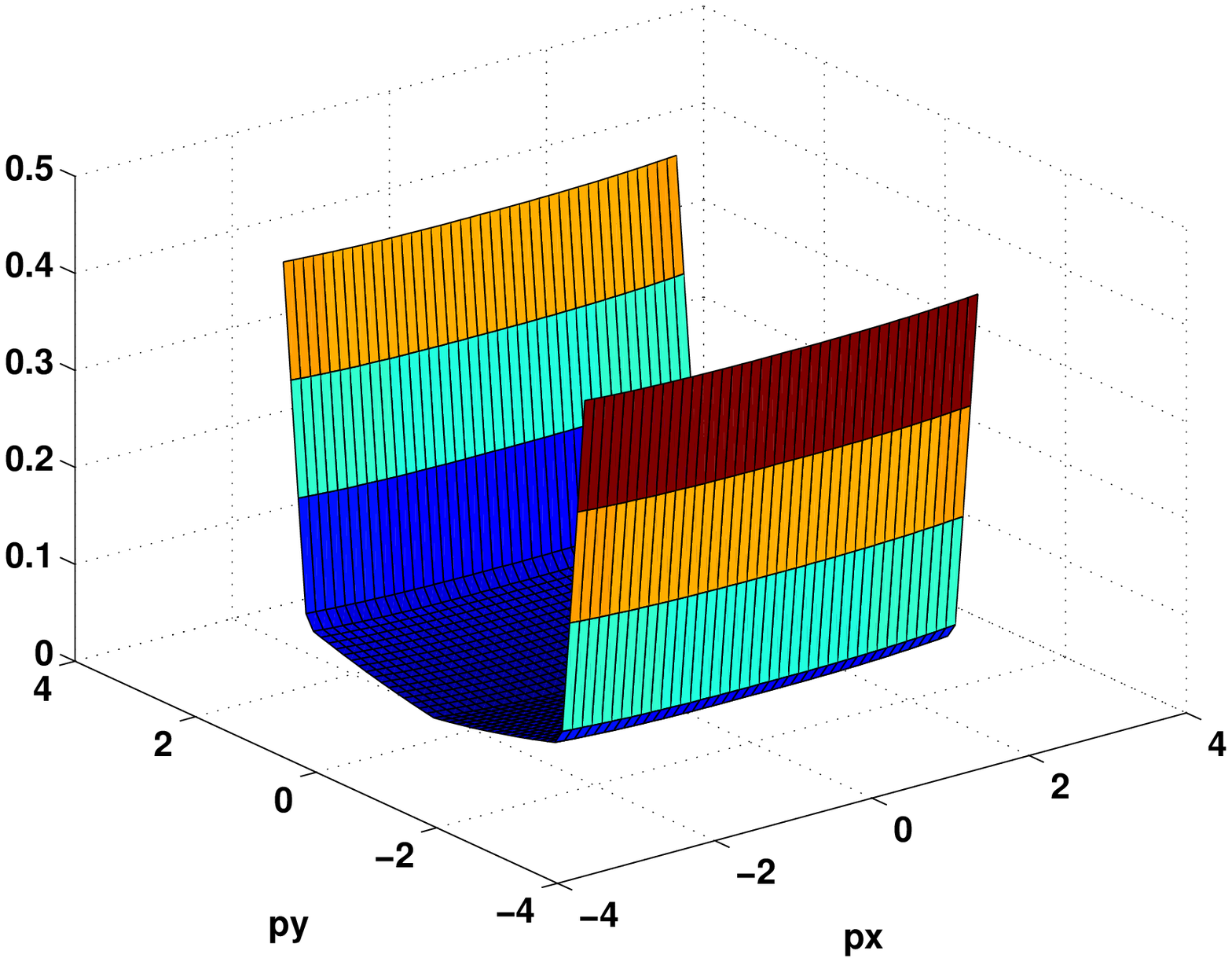}}
  \caption{Surface plots. (a) The original Hamiltonian: $S=0$. (b) $S=0.50$. (c)  $S=1.0$.  (d) $S=2.0$. (e)  $S=4.0$.}
\label{qtyEx1_2_weno3a}
\end{figure}

\begin{figure}[htb]
 \centering
 (a){\includegraphics[scale=0.22]{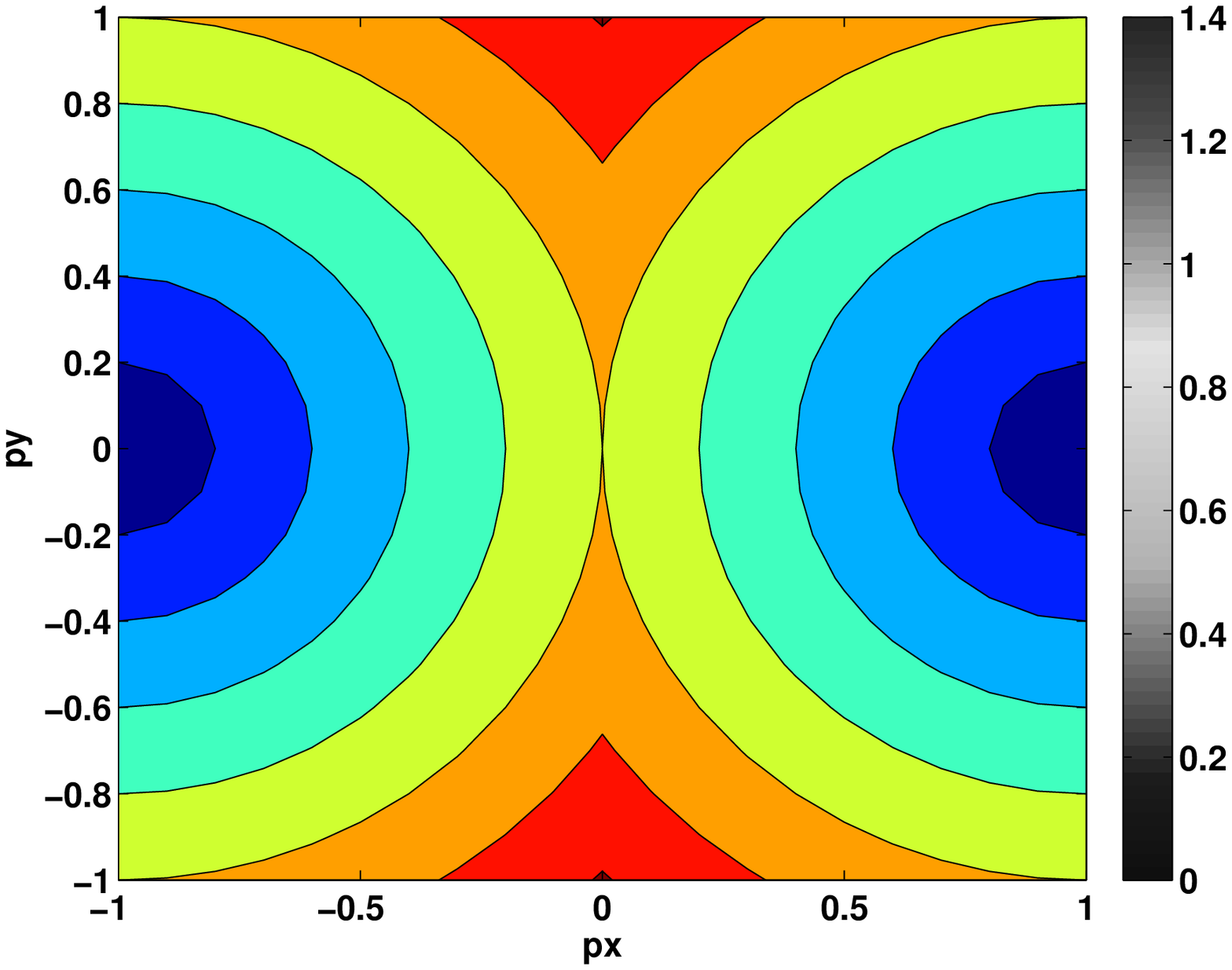}}
 (b){\includegraphics[scale=0.22]{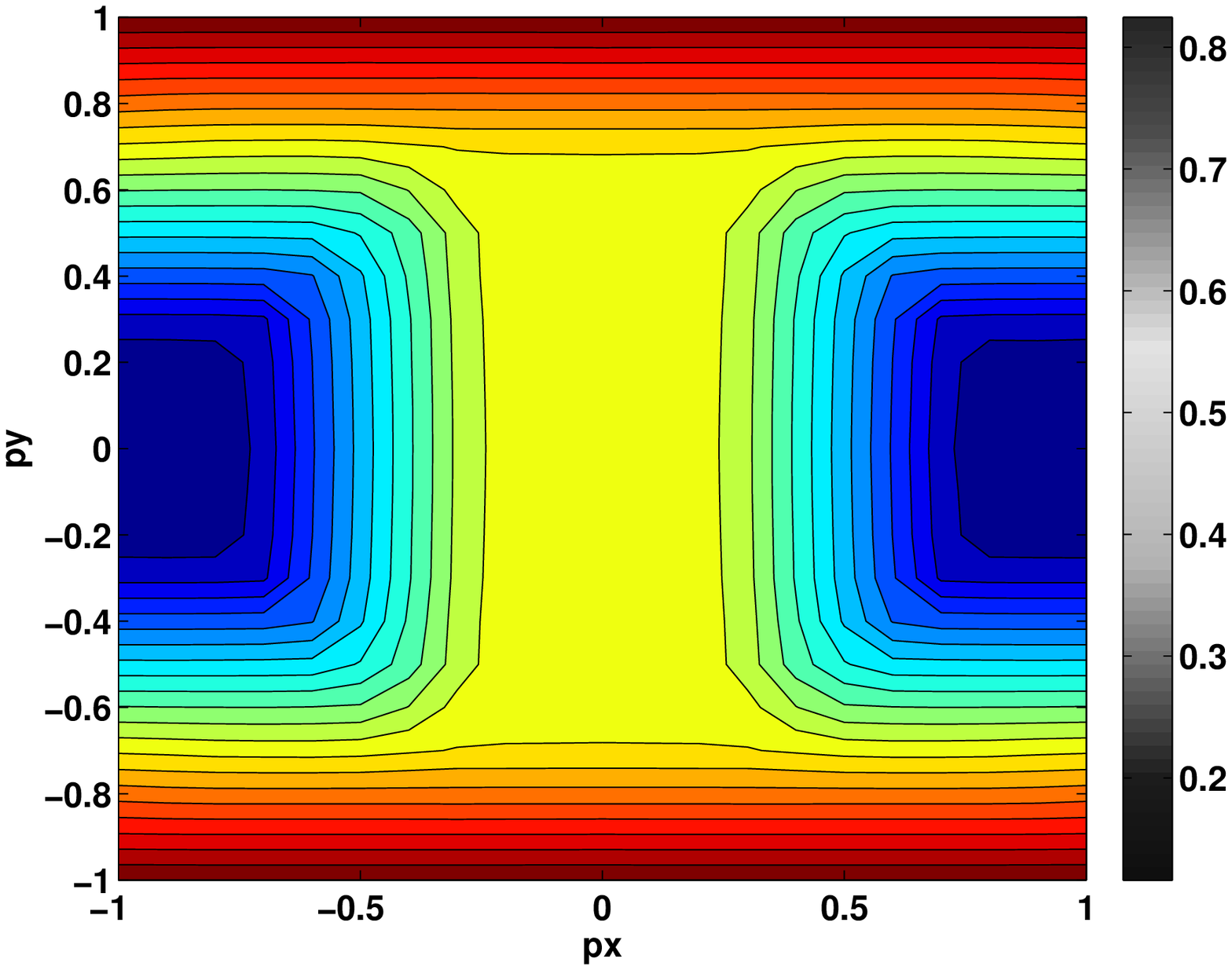}}
 (c){\includegraphics[scale=0.22]{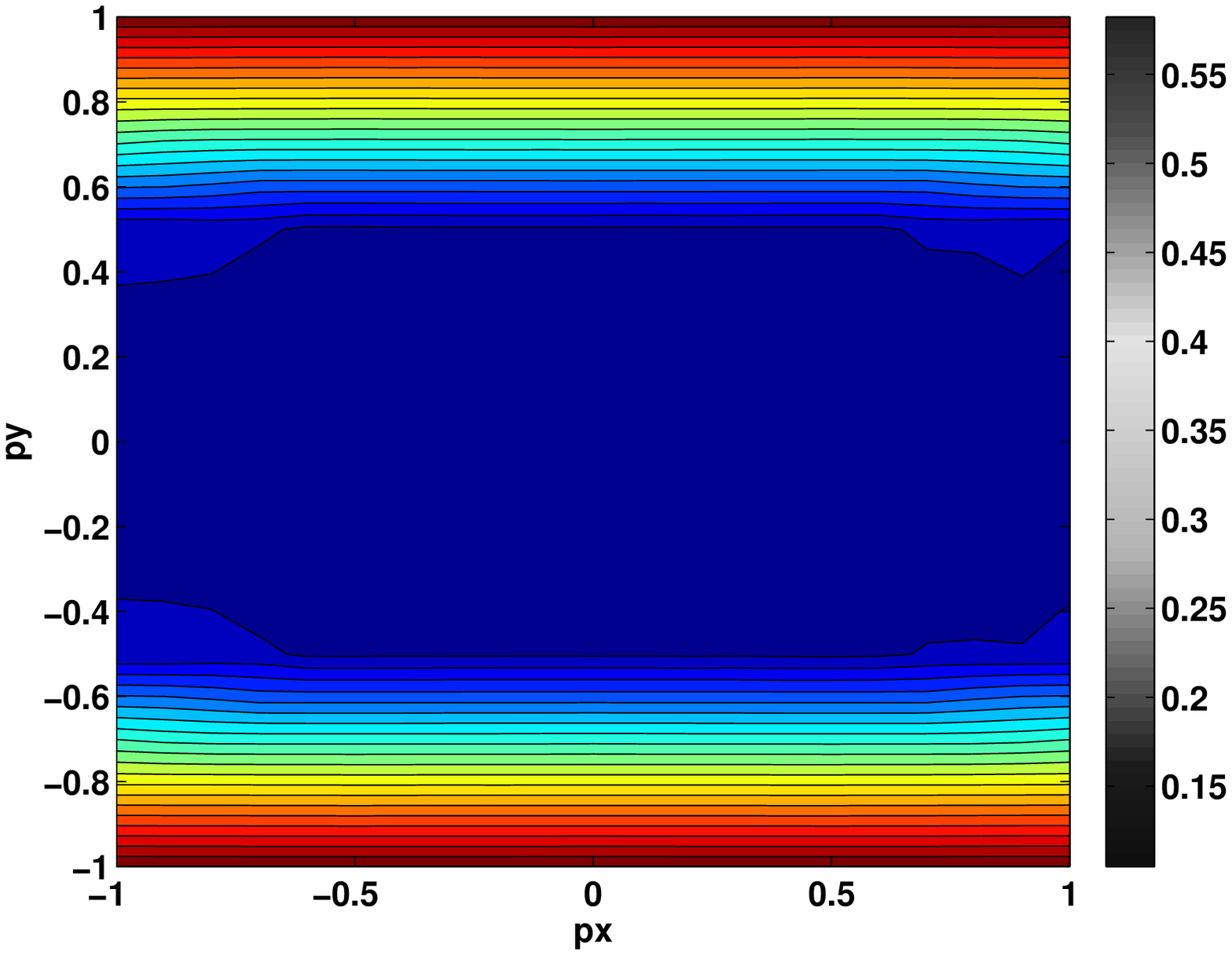}}
 (d){\includegraphics[scale=0.22]{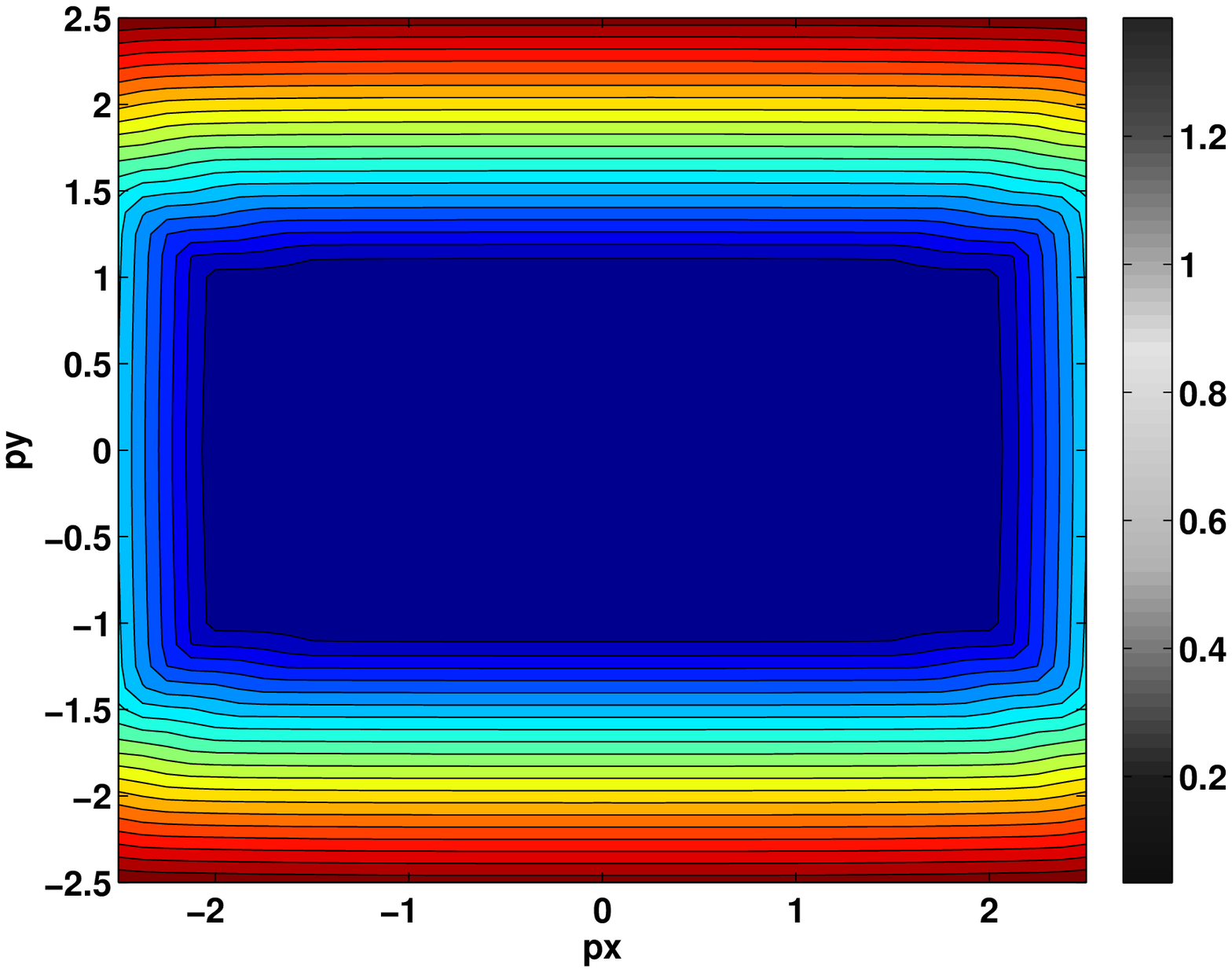}}
 (e){\includegraphics[scale=0.22]{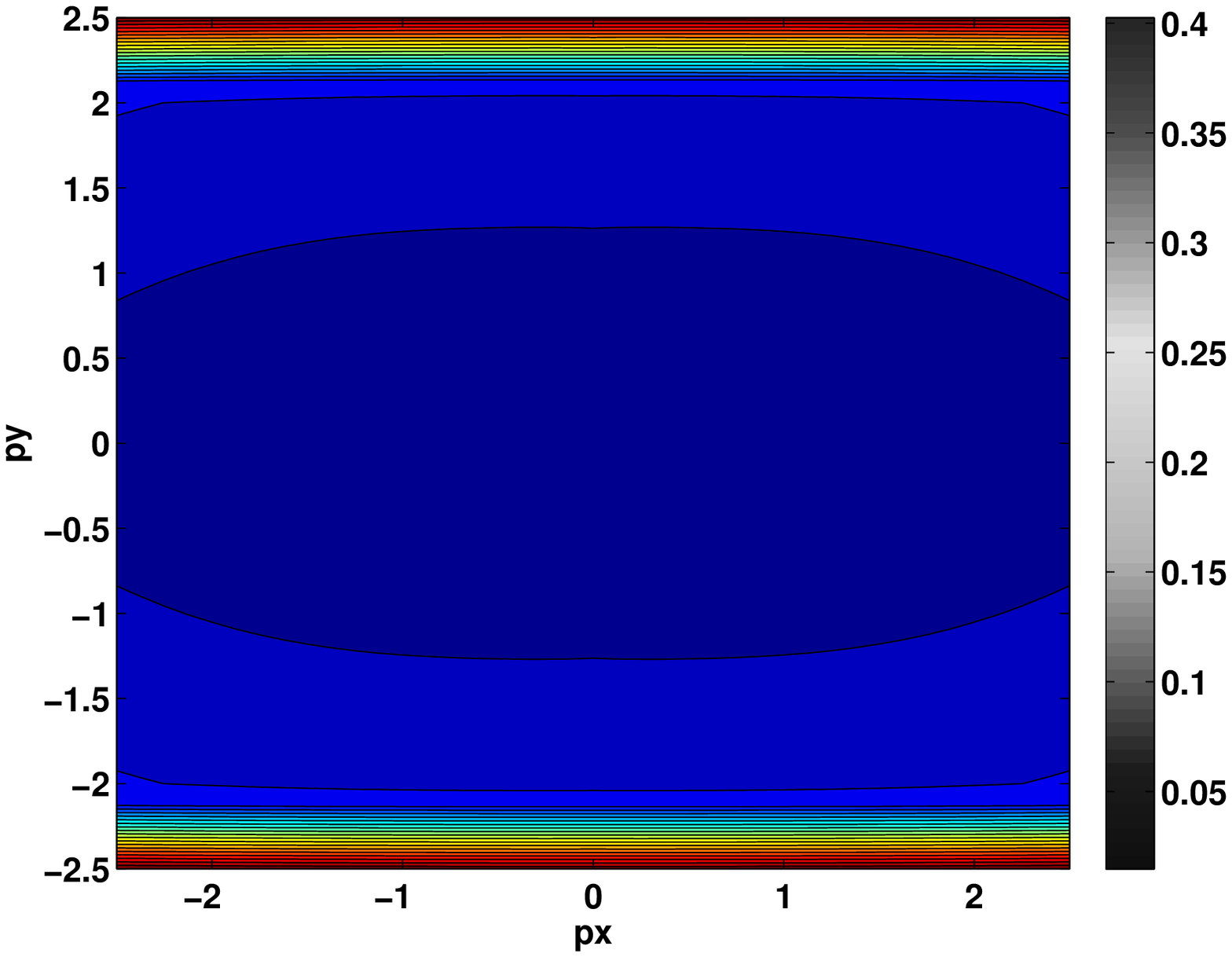}} \caption{Contour plots. (a) The original Hamiltonian: $S=0$. (b) $S=0.50$. (c)  $S=1.0$. (d) $S=2.0$. (e)  $S=4.0$.}
\label{qtyEx1_2_weno3b}
\end{figure}

\medskip

%

\noindent{\bf Numerical example 4.}  We consider 
\begin{equation*}\label{eq-dw3}
\begin{cases}
H(p)=\min \left\{|p-e_1|, \  |p+e_1|\right\} \quad &\text{ for } p=(p_1,p_2)\in \R^2,\\
V(x)=S * \left(3+\sin (2\pi x_1)+\sin (4\pi x_1)+\sin (2\pi x_2)\right) \quad &\text{ for } x=(x_1,x_2) \in \T^2.
\end{cases}
\end{equation*}
This is the case that $V$ is not even. The constant $S$ serves as the scaling parameter to increase or decrease the effect of the potential. See Figure \ref{qtyEx1_5_weno3d} below. Clearly, $\ol H$ is not even when $S=0.125$,   $S=0.25$, $S=0.5$, $S=0.95$ and $S=1.0$. Loss of evenness for all $S$ implies that $\ol H$ can not  have a decomposition formula like (\ref{uni-decom}) regardless of the oscillation of the $V$.  
 
\begin{figure}[htb]
 \centering
 (a){\includegraphics[scale=0.22]{OriHamLevelsetKeyHam1Mp21.eps}}
 (b){\includegraphics[scale=0.22]{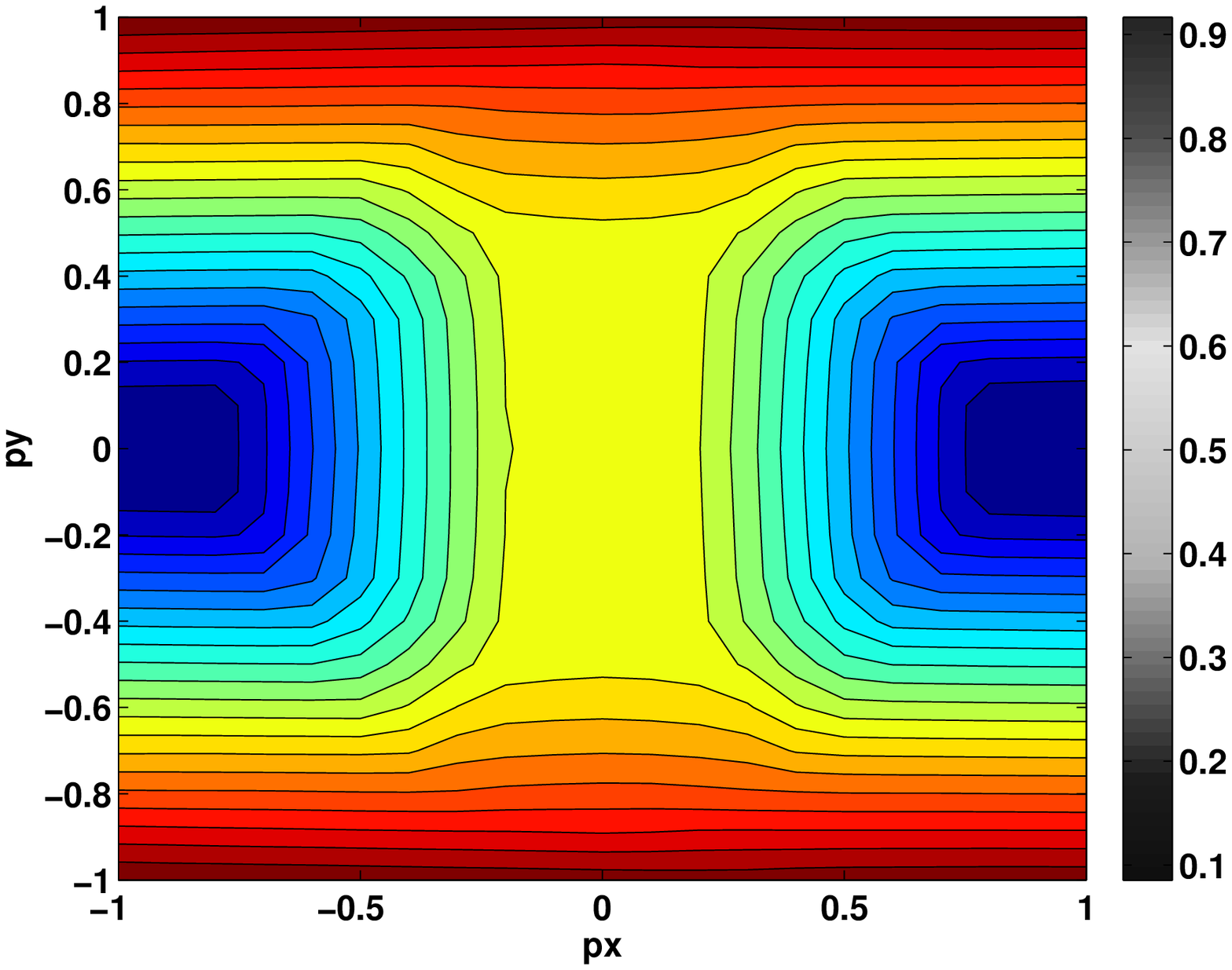}}
 (c){\includegraphics[scale=0.22]{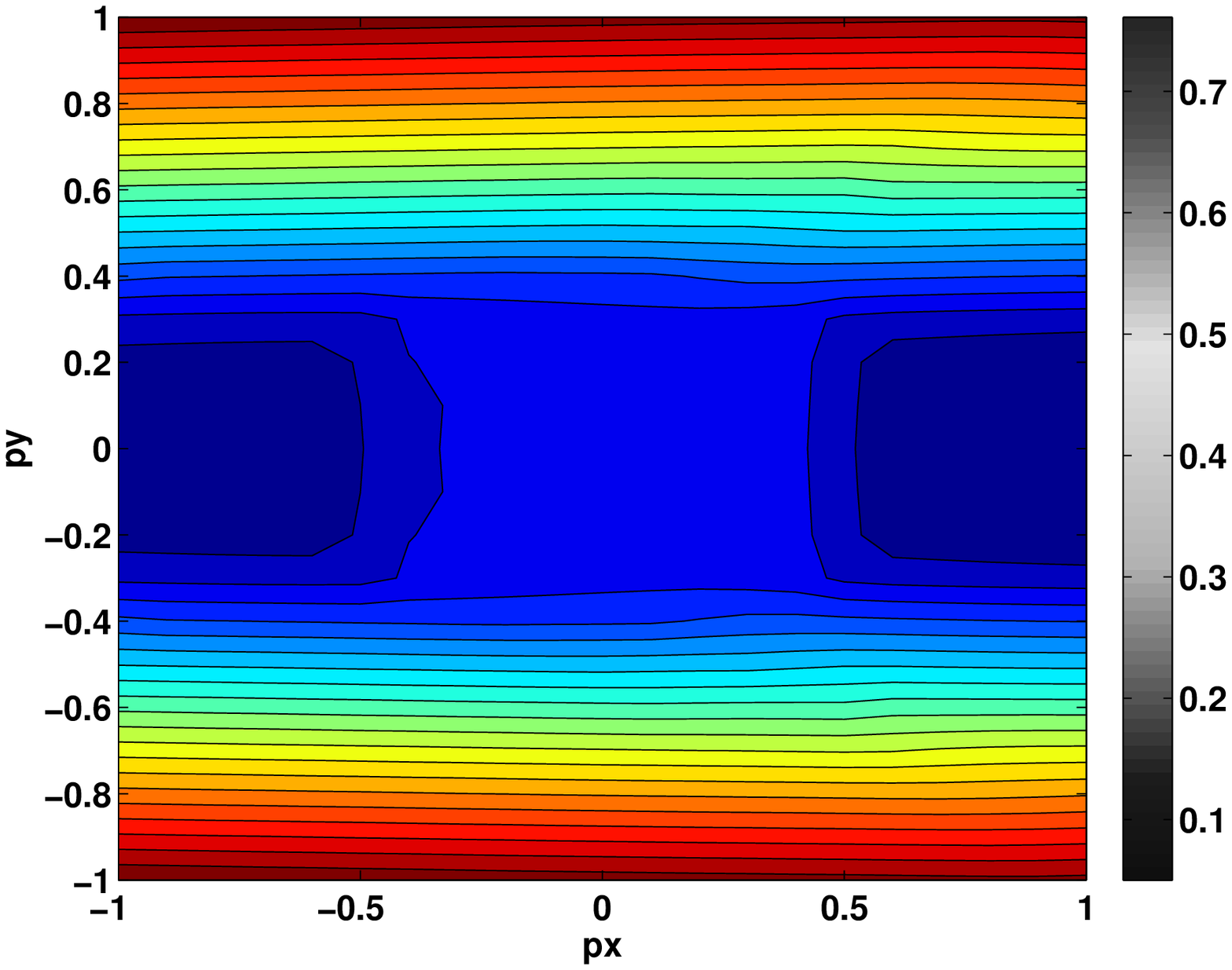}}\\
 (d){\includegraphics[scale=0.22]{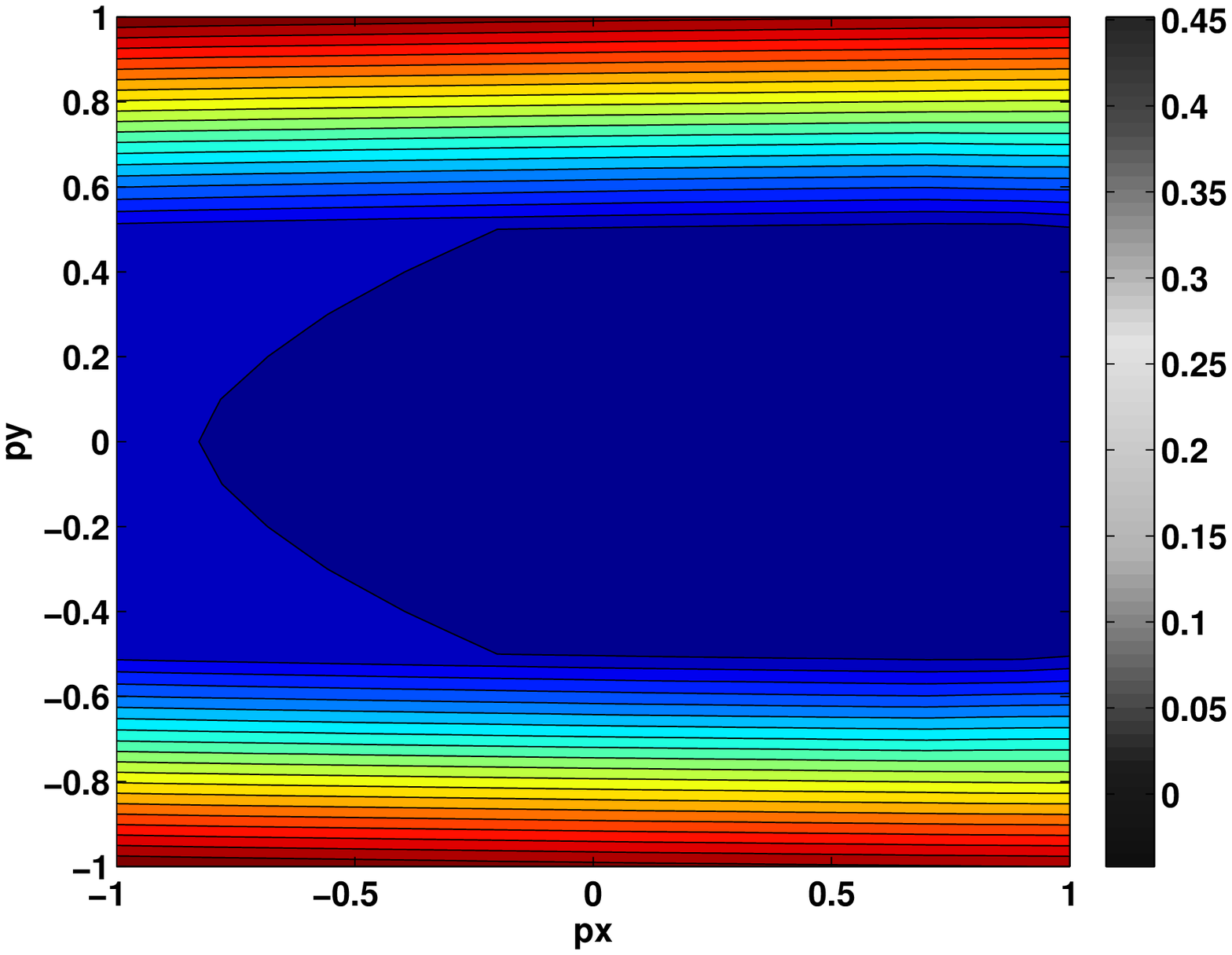}} 
 (e){\includegraphics[scale=0.22]{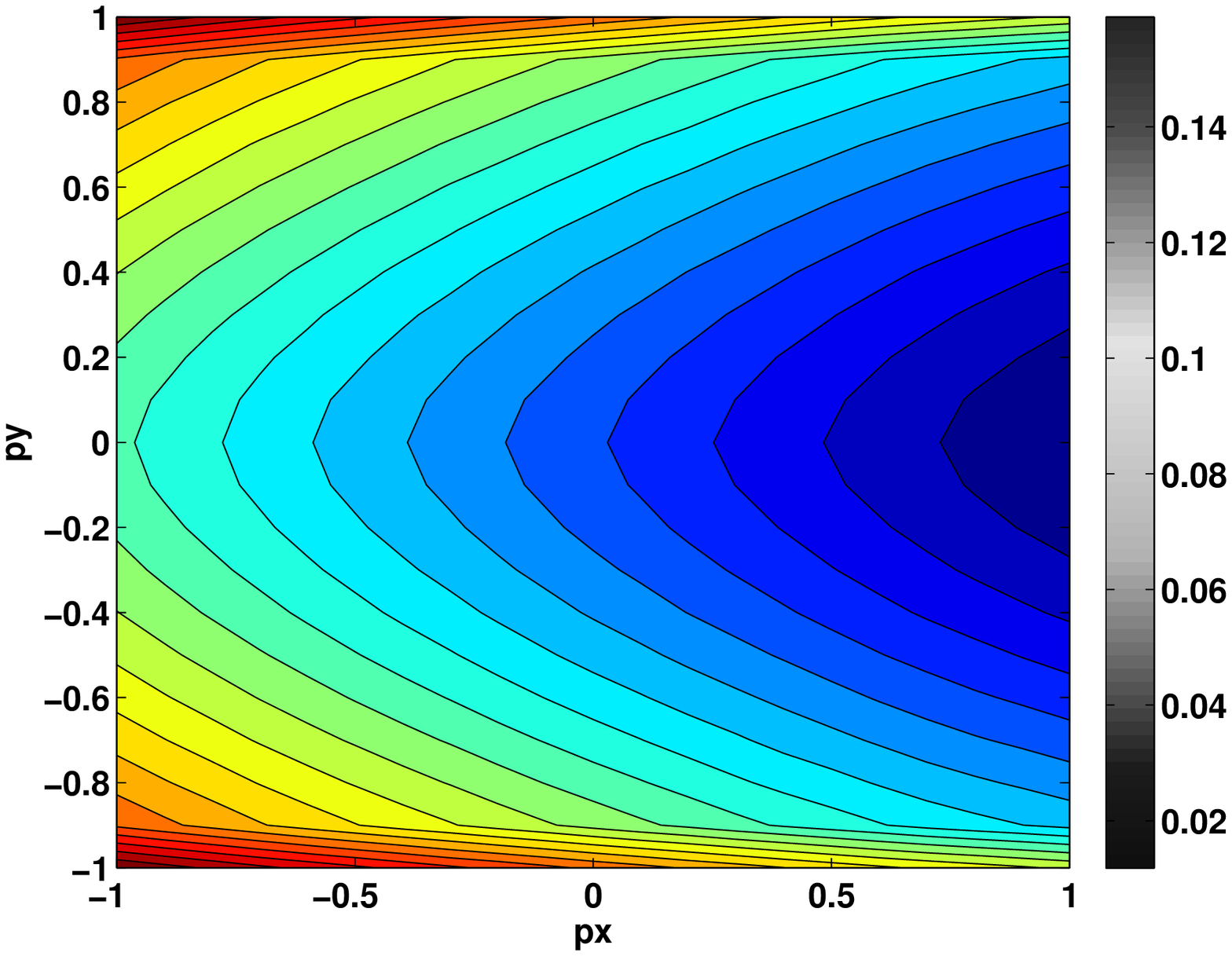}}
 (f){\includegraphics[scale=0.22]{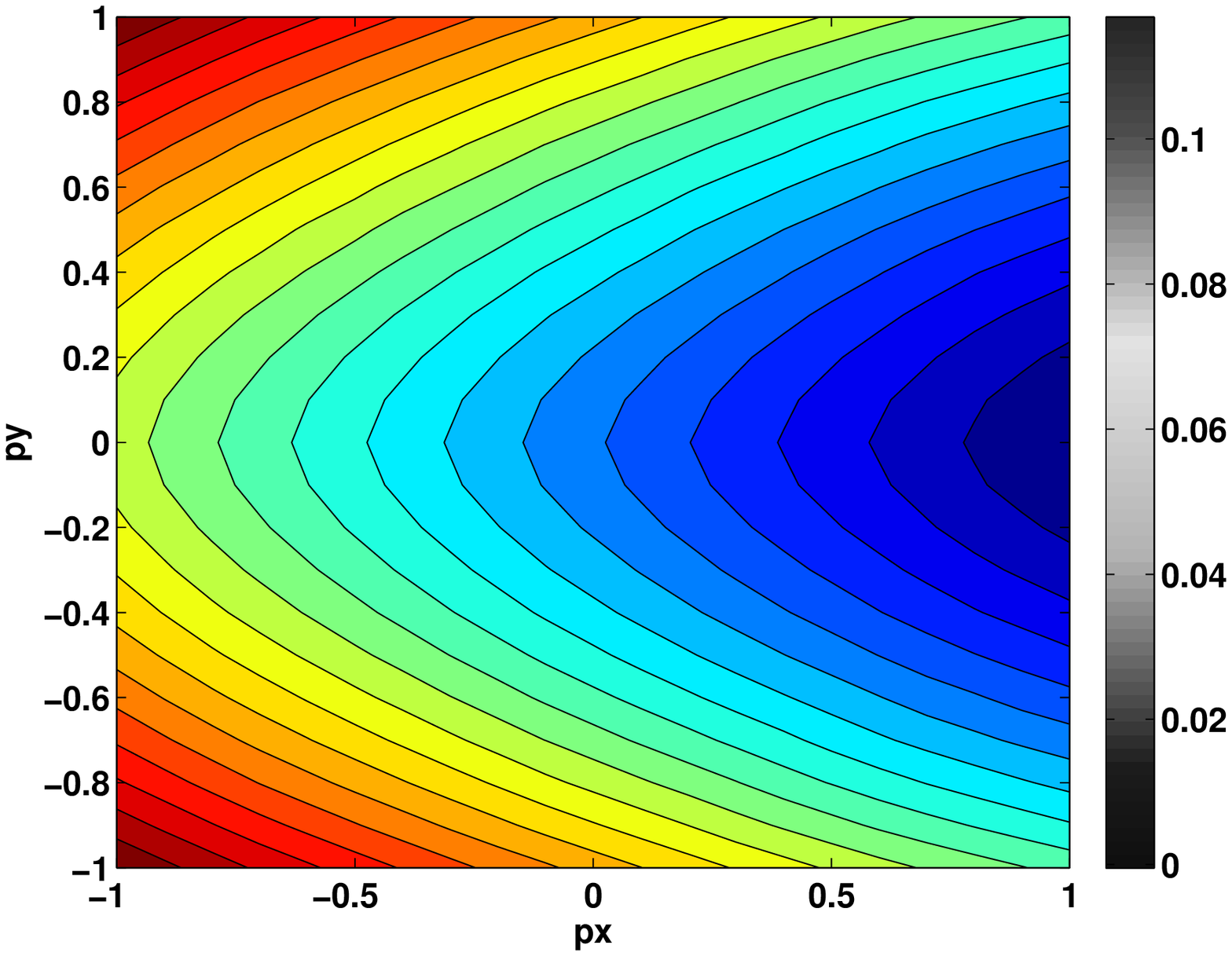}}\\  
\caption{(a) The original Hamiltonian: $S=0$. (b) $S=0.125$. (c) $S=0.25$. (d) $S=0.5$. (e) $S=0.95$. (f) $S=1.0$.}
\label{qtyEx1_5_weno3d}
\end{figure}

\section{Appendix: Some application in Random homogenization}\label{sec:Appen}
As a bypass product, we show that all Hamiltonians in Section \ref{sec:min-max}
are actually regularly homogenizable in the stationary ergodic setting.
Let us first give a brief overview of stochastic homogenization.

\subsection{Brief overview of stochastic homogenization}
 Let $(\Om, \cF, \bP)$ be a probability space.   
 Suppose that $\{\tau_y\}_{y\in \R^n}$ is a  measure-preserving translation group action of $\R^n$ on $\Om$ which satisfies that 
\begin{itemize}
\item[(1)] ({\bf Semi-group property})
$$
\tau_x\circ \tau_y=\tau_{x+y} \quad \text{ for all } x,y\in \R^n.
$$
\item[(2)] ({\bf Ergodicity}) For any $E\in \cF$, 
$$
\tau_x(E)=E \quad   \text{ for all } x\in \R^n  \quad \Rightarrow  \quad \bP(E)=0 \quad \mathrm{or} \quad \bP(E)=1.
$$
\end{itemize}
The potential $V(x, \omega):\R^n\times \Om \to \R$ is assumed to be stationary,  bounded and uniformly continuous.
More precisely, $V(x+y,\omega)=V(x, \tau_y \omega)$ for all $x,y \in \R^n$ and $\om \in \Om$,  
$\text{ess} \sup_{\Om}|V(0,\omega)|<+\infty$ and
$$
|V(x,\omega)-V(y,\omega)|\leq c(|x-y|) \quad \text{ for all $x,y \in \R^n$ and $\om \in \Om$},
$$
for  some function $c:[0, \infty)\to  [0, \infty)$ satisfying $\lim_{r\to 0}c(r)=0$. 

For $\ep>0$, denote $u^{\ep}(x,t, \omega)$ as the unique viscosity solution to 
\begin{equation}\label{RamHJ-eq}
\begin{cases}
u^\ep_t + H(Du^\ep) - V\left(\frac{x}{\ep}, \omega\right)=0 \quad &\text{ in } \R^n \times (0,\infty),\\
u^\ep(x,0, \omega)=g(x) \quad &\text{ on } \R^n.
\end{cases}
\end{equation}
Here $H\in C(\R^n)$ is coercive.  A basic question is whether $u^{\ep}$, as $\ep \to 0$,  converges to the solution to an effective deterministic equation (\ref{HJ-hom}) almost surely as in the periodic setting.

The stochastic homogenization of Hamilton-Jacobi equations   has received much attention in the last seventeen years. 
The first  results were due to Rezakhanlou and Tarver \cite{ReTa} and Souganidis \cite{Sou}, 
who independently proved convergence results for general convex, first-order Hamilton-Jacobi equations in stationary ergodic setting. 
These results were extended to the viscous case with convex Hamiltonians by Kosygina, Rezakhanlou and Varadhan \cite{KRV} and, independently, 
by Lions and Souganidis \cite{LiS1}. 
New proofs of these results based on the notion of intrinsic distance functions (maximal subsolutions) 
appeared later in Armstrong and Souganidis \cite{AS3} for the first-order case 
and in Armstrong and Tran \cite{AT1} for the viscous case.
See Davini, Siconolfi \cite{DaSi}, Armstrong, Souganidis \cite{AS3} for homogenization of quasiconvex, first-order Hamilton-Jacobi equations.
\smallskip

One of the prominent  open problems in the field is to prove/disprove homogenization in the genuinely nonconvex setting. 
In \cite{ATY1}, Armstrong, Tran and Yu showed that, for $H(p)=(|p|^2-1)^2$, \eqref{RamHJ-eq}
homogenizes  in all space dimensions $n\geq 1$. 
In the next paper \cite{ATY2},
Armstrong, Tran and Yu proved that, for $n=1$,  \eqref{RamHJ-eq} homogenizes for general coercive $H$. 
Gao \cite{Gao} generalized the result in \cite{ATY2} to the general non separable Hamiltonians $H(p,x,\om)$ in one space dimension. 
{\it A common strategy  in  papers \cite{ATY1, ATY2,  Gao} is to identify the shape of $\ol H$  in the periodic setting first and then recover it in the stationary ergodic setting. } In particular,  in contrast to previous works,  our strategy does not depend on finding some master ergodic quantities suitable for subadditive ergodic theorems.  Such kind of ergodic quantities may not exist at all for  genuinely nonconvex $H$.  

\smallskip

Ziliotto \cite{Zi} gave a counterexample to homogenization of \eqref{RamHJ-eq} in case $n=2$.
See also the paper by Feldman and Souganidis \cite{Fe-Sou}.
Basically, \cite{Zi,Fe-Sou} show that, if $H$ has a strict saddle point, then there exists a potential energy $V$ such that
$H-V$ is not homogenizable.
\smallskip

Based on min-max formulas established in Section \ref{sec:min-max},  we prove that, for the Hamiltonians $H$ appear in Theorem \ref{thm:rep1}, Corollary \ref{cor:rep3},
 Lemma \ref{thm:rep2}, and Theorem \ref{Maintheorem}, $H-V$ is always regularly homogenizable in all space dimensions $n \geq 1$. See the precise statements in Theorems \ref{thm:random}, \ref{thm:random-m}, and Corollary \ref{cor:random} in Subsection \ref{sec:stoc-main}.
Theorem \ref{thm:random} includes the result in \cite{ATY1} as a special case.  Also, 
the result of Corollary \ref{cor:random}  implies that, in some specific cases,
even if $H$ has strict saddle points, $H-V$ is still regularly homogenizable  for every $V$ with large oscillation. See the comments after its statement and some comparison between this result and the counterexamples in \cite{Zi, Fe-Sou}.  

\smallskip

The authors tend to believe that a prior identification of the shape of $\ol H$ might be necessary in order to tackle homogenization in the general stationary ergodic setting.   In certain special random environment like  finite range dependence (i.i.d),  the homogenization was established for a class of Hamiltonians in  interesting works of Armstrong, Cardaliaguet \cite{AC}, Feldman, Souganidis \cite{Fe-Sou}.  Their proofs are based on completely different philosophy and, in particular,  rely on specific properties of the random media.  

\smallskip

In the viscous case (i.e., adding $-\ep \Delta u^{\ep}$ to equation (\ref{RamHJ-eq})), the stochastic homogenization problem for nonconvex Hamiltonians is more formidable.  For example, the homogenization has not even been proved or disproved for  simple cases like $H(p,x)=(|p|^2-1)^2-V(x)$ in one dimension.  Min-max formulas in the inviscid case are in general not available here due to the nonlocal effect (or regularity)  from the viscous term.  Nevertheless, see  a preliminary result in one dimensional space by Davini and Kosygina \cite{DaKo}.

\smallskip

The following definition was first introduced in  \cite{ATY2}.
\begin{defn}\label{regularhomo}  
We say that $H-V$  is regularly homogenizable if  for every  $p\in  \R^n$,  there exists a unique constant $\overline H(p)\in   \R$ such that,
for every $R>0$ and for a.s. $\omega\in \Omega$,
\begin{equation}\label{ergo0}
\limsup_{\lambda\to 0}\max_{|x|\leq {R\over \lambda}}\left|\lambda v_{\lambda}(x, p, \omega)+\overline H(p)\right|=0.
\end{equation}
Here, for $\lam>0$,  $ v_{\lambda}(\cdot, p, \omega)\in W^{1,\infty}(\R^n)$ is the unique bounded viscosity solution to
$$
\lambda v_{\lambda}+H(p+Dv_{\lambda})-V(x, \omega)=0   \quad \text{in $\R^n$}.
$$
\end{defn}

According to Lemma 5.1 in  \cite{AS3},  \eqref{ergo0}  is equivalent to saying that for a.s. $\omega\in \Omega$,
$$
\lim_{\lambda\to 0}\left|\lambda v_{\lambda}(0, p, \omega)+\overline H(p)\right|=0.
$$
Clearly,  if $H-V$ is regularly homogenizable,  then random homogenization holds,
that is,  solution $u^{\ep}$ of \eqref{RamHJ-eq} converges to $u$, the solution to \eqref{HJ-hom} with $\ol{H}$ defined by \eqref{ergo0}, as $\ep \to 0$.

\subsection{Stochastic homogenization results}\label{sec:stoc-main}
The main claim is that $H(p)-V(x,\om)$ is regularly homogenizable provided that $H(p)$ is of a form in Theorem \ref{thm:rep1}, Corollary \ref{cor:rep3},
 Lemma \ref{thm:rep2}, and Theorem \ref{Maintheorem}. The proof is basically a repetition of arguments in the proofs of the aforementioned results 
except that the cell problem in the periodic setting is replaced by the discount ergodic problem in the random environment.   
This is because of the fact that the cell problem in the random environment might not have sublinear solutions at all.
Here are the precise statements of the results.

\begin{thm} \label{thm:random}
Assume that $H\in C(\R^n)$ satisfies {\rm (H1)--(H3)}.
Assume further that $ {\rm ess}\inf_{\Omega}V(0,\omega)=0$.
Then $H-V$ is regularly homogenizable and
\[
\ol{H} =\max\left\{ \ol{H}_1, \ol{H}_2,0\right\}.
\]
\end{thm}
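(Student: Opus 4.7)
The plan is to mimic the four-step structure of the proof of Theorem \ref{thm:rep1}, with the periodic cell problem systematically replaced by the stationary discount approximation $\lambda v_\lambda + H(p+Dv_\lambda) - V(x,\omega) = 0$ on $\R^n$; regular homogenization will then follow by establishing $-\lambda v_\lambda(0,p,\omega)\to \max\{\ol H_1(p),\ol H_2(p),0\}$ a.s.\ as $\lambda\to 0^+$. As preparation, I would record that $H_1-V$ and $H_2-V$ are already regularly homogenizable with effective Hamiltonians $\ol H_1$ and $\ol H_2$: for the quasiconvex $H_1$ this is Davini--Siconolfi \cite{DaSi} and Armstrong--Souganidis \cite{AS3}, and for the quasiconcave $H_2$ it follows by applying the same theory to the quasiconvex $p\mapsto -H_2(-p)$ via the change of unknown $w_\lambda(x,\omega):=-v_\lambda^{(2)}(x,-p,\omega)$. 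The inf-max representation formula available in this framework also forces $\ol H_1$ to be even, exactly as in Step 2 of the proof of Theorem \ref{thm:rep1}.

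The inequality $\ol H\geq\max\{\ol H_1,\ol H_2,0\}$ is immediate from comparison at the discount level: since $H_i\leq H$, the solution $v_\lambda$ of the $H$-problem is a subsolution of the $H_i$-problem, hence $v_\lambda\leq v_\lambda^{(i)}$; passing to $\lambda\to 0$ yields $\ol H\geq \ol H_i$, while $\ol H\geq 0$ comes from $H\geq 0$. The reverse inequality is proved pointwise in $p$. If $\ol H_1(p)\geq\max\{\ol H_2(p),0\}$, I would take the discount solution $v_\lambda^{(1)}(x,-p,\omega)$ and set $w_\lambda(x,\omega):=-v_\lambda^{(1)}(x,-p,\omega)$; combining the rigidity of viscosity solutions to level-set-convex equations (Barles--Jensen \cite{BJ}) with evenness of $H_1$ and the identity $H=H_1$ on $\{H_1\geq 0\}$ shows that $w_\lambda$ is a viscosity subsolution of
\[
\lambda w_\lambda+H(p+Dw_\lambda)-V(x,\omega)=o(1) \quad \text{as } \lambda\to 0,
\]
whence comparison gives $\ol H(p)\leq \ol H_1(p)$. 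The symmetric case $\ol H_2(p)\geq\max\{\ol H_1(p),0\}$ is handled analogously, using $v_\lambda^{(2)}(x,p,\omega)$ directly (no sign flip) and the quasiconcavity of $H_2$, exactly as in Step 3 of the periodic proof.

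For the residual case $\max\{\ol H_1(p),\ol H_2(p)\}<0$, I would import Step 4 verbatim: introduce $\sigma\in[0,1]$ and the rescaled potential $\sigma V$, write $\ol H^\sigma,\ol H_i^\sigma$ for the associated effective Hamiltonians, and use the continuity of $\sigma\mapsto\ol H_1^\sigma(p)$ (a stability consequence of the discount approximation and comparison) together with $\ol H_1^0(p)=H_1(p)>0$ and $\ol H_1^1(p)<0$ to produce $s\in(0,1)$ with $\ol H_1^s(p)=0$; the first two cases at parameter $s$ then force $\ol H^s(p)=0$, and the monotonicity $\ol H\leq \ol H^s$ yields $\ol H(p)=0$. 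The hard part will be the $o(1)$ bookkeeping in the subsolution construction above: whereas the periodic cell problem is a genuine static PDE to which quasiconvex rigidity applies cleanly, the discount equation carries the extra term $\lambda v_\lambda^{(1)}$ which is only known a priori to approach $-\ol H_1(p)$ in an averaged, almost-sure sense as $\lambda\to 0$; controlling this error uniformly on balls, so that the limit sub-equation for $w_\lambda$ has the correct effective right-hand side, is the essential technical point that distinguishes this argument from its periodic analogue.
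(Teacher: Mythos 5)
Your proposal is essentially the paper's proof: replace the cell problems by the discounted problems, use that $H_1-V$ and $H_2-V$ are regularly homogenizable (quasiconvex/quasiconcave, \cite{DaSi,AS3}) together with evenness of $\ol{H}_1$, run the sign-flip $-v_{1\lam}(\cdot,-p)$ with Barron--Jensen rigidity and $H=H_1$ on $\{H_1\ge 0\}$, and close the remaining case by the $\sig V$ continuation argument. Two points of bookkeeping, though. First, your displayed claim that $w_\lam:=-v^{(1)}_\lam(\cdot,-p)$ is a subsolution of $\lam w_\lam+H(p+Dw_\lam)-V=o(1)$ is not right as written: at regular points the rigidity gives $H_1(p+q)=V+\lam w_\lam$, so the left-hand side is approximately $2\lam w_\lam\approx 2\ol{H}_1(p)$, not $o(1)$, and comparison from that display would even give the reversed inequality. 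The paper fixes this by the constant shift $w:=-v_{1\lam}(\cdot,-p)-2\ol{H}_1(p)/\lam$, after which the error is $2\ep+\del_\ep$ with $\del_\ep=\max\{H(q)-H_1(q):\,H_1(q)\ge-\ep\}\to 0$, and comparison then yields exactly your conclusion $\ol{H}(p)\le\ol{H}_1(p)$. Second, the ``essential technical point'' you leave open (uniform control of $\lam v^{(1)}_\lam+\ol{H}_1(p)$ on large balls) is not an additional difficulty: the definition of regular homogenizability (Definition \ref{regularhomo}, equivalently Lemma 5.1 of \cite{AS3}) gives precisely the uniform convergence on $B(0,R/\lam)$, and the paper then invokes a localized comparison lemma on $B(0,R/\lam)$ (Lemma \ref{lem:comparison}) whose boundary penalty contributes only $C/R$, which disappears upon sending $\lam\to 0$, then $\ep\to0$, then $R\to\infty$. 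Finally, in the residual case $\max\{\ol{H}_1(p),\ol{H}_2(p)\}<0$ you should treat both $p\in U$ (where $H_1(p)<0$, so one continues in $\sig$ along $\ol{H}_2^\sig(p)$, starting from $\ol{H}_2^0(p)=H_2(p)>0$) and $p\notin\ol{U}$ (your $\ol{H}_1^\sig$ argument), as the paper does.
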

As mentioned, this theorem includes the result in \cite{ATY1} as a special case.
An important corollary of this theorem is the following:
\begin{cor}\label{cor:random}   
Let $H\in C(\R^n)$ be a coercive Hamiltonian satisfying {\rm (H1)--(H3)}, 
except that we do not require $H_2$ to be quasiconcave.   
Assume  that  $ {\rm ess}\inf_{\Omega}V(0,\omega)=0$, $ {\rm ess}\sup_{\Omega}V(0,\omega)=\ol{m}$, and 
\[
\ol{m}>  \max_{\ol{U}}H=\max_{\R ^n}H_2.
\] 
Then $H-V$ is regularly homogenizable and
\[
\ol H=\max\left\{ \ol H_1,   0 \right\}.
\]
In particular,  $\ol H$ is quasiconvex in this situation. 
\end{cor}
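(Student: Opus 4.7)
The plan is to imitate the periodic proof of Corollary~\ref{cor:rep3}, substituting Theorem~\ref{thm:random} for Theorem~\ref{thm:rep1}. As in that proof, construct a continuous quasiconcave $H_2^+$ with $\{H_2^+=0\}=\partial U$, $H\leq H_2^+$ on $\ol U$, $\max_{\R^n} H_2^+=\max_{\ol U} H$, and $H_2^+(p)\to-\infty$ as $|p|\to\infty$; set $H^+=\max\{H,H_2^+\}$. Then $H^+$ satisfies (H1)--(H3) (evenness of $H_2^+$ can be arranged by $p\mapsto\min\{H_2^+(p),H_2^+(-p)\}$, which preserves quasiconcavity, or else not needed for the representation formula by the remark after Theorem~\ref{thm:rep1}), so Theorem~\ref{thm:random} applied to $H^+-V$ yields regular homogenization together with $\ol{H}^+=\max\{\ol H_1,\ol H_2^+,0\}$.

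The crucial step is to show $\ol H_2^+\leq\max_{\R^n}H_2^+-\ol m<0$. Let $w_\lambda(\cdot,p,\omega)$ denote the discount approximation for $H_2^+-V$. Because $H_2^+(q)\to-\infty$ as $|q|\to\infty$, together with $\|\lambda w_\lambda\|_\infty\leq\|V\|_\infty$, $w_\lambda$ is Lipschitz in $x$ uniformly in $\lambda$, and the equation $\lambda w_\lambda=V-H_2^+(p+Dw_\lambda)$, valid a.e., combined with continuity of both sides, yields
\[
\lambda w_\lambda(x,\omega)\geq V(x,\omega)-\max_{\R^n} H_2^+\qquad\text{for every } x\in\R^n,\ \text{a.s.}
\]
Since $\mathrm{ess\,sup}\, V(0,\omega)=\ol m$, stationarity and the ergodic $0$--$1$ law produce, for a.e.\ $\omega$ and every $\eta>0$, an almost surely finite point $x_\eta(\omega)\in\R^n$ with $V(x_\eta(\omega),\omega)\geq\ol m-\eta$. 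For $\lambda$ sufficiently small $|x_\eta(\omega)|\leq R/\lambda$, so the uniform convergence in Definition~\ref{regularhomo} gives $\lambda w_\lambda(x_\eta(\omega),\omega)\to-\ol H_2^+(p)$, and the previous display forces $-\ol H_2^+(p)\geq\ol m-\eta-\max_{\R^n} H_2^+$. Sending $\eta\to 0$ and invoking $\ol m>\max_{\R^n}H_2^+$ gives $\ol H_2^+<0$, so $\ol H^+=\max\{\ol H_1,0\}$.

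Finally I would promote this to regular homogenization for $H-V$ itself by sandwiching. Since $H_1\leq H\leq H^+$, comparison for the discount problem gives $v_\lambda^{H_1}\geq v_\lambda^{H}\geq v_\lambda^{H^+}$, so Theorem~\ref{thm:random} applied to the quasiconvex $H_1$ and to $H^+$ yields
\[
\ol H_1(p)\leq\liminf_{\lambda\to 0}\bigl(-\lambda v_\lambda^{H}(0,p,\omega)\bigr)\leq\limsup_{\lambda\to 0}\bigl(-\lambda v_\lambda^{H}(0,p,\omega)\bigr)\leq\max\{\ol H_1(p),0\}
\]
almost surely. An entirely parallel ergodic-point argument, now using $H\geq 0$ and $\mathrm{ess\,inf}\,V=0$ to select $y_\eta(\omega)$ with $V(y_\eta(\omega),\omega)\leq\eta$, gives $\lambda v_\lambda^{H}(y_\eta(\omega),\omega)\leq\eta$; combined with the uniform-in-$\lambda$ Lipschitz bound $|v_\lambda^{H}(0)-v_\lambda^{H}(y_\eta)|\leq C|y_\eta|$ coming from coercivity of $H$, this transfers to $\limsup_\lambda\lambda v_\lambda^{H}(0,p,\omega)\leq\eta$ and hence, as $\eta\to 0$, to $\liminf_\lambda\bigl(-\lambda v_\lambda^{H}(0,p,\omega)\bigr)\geq 0$. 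The three bounds collapse to $\ol H=\max\{\ol H_1,0\}$, and the $x$-uniform convergence required by Definition~\ref{regularhomo} then follows from Lemma~5.1 of \cite{AS3}. I expect the main obstacle to be the second step: the pointwise lower bound on $\lambda w_\lambda$ is known only at random points $x_\eta(\omega)$, whereas regular homogenization prescribes behavior at the single point $x=0$, and bridging the two demands both the $x$-uniform convergence built into Definition~\ref{regularhomo} and the ergodic fact that $V$ almost surely attains values arbitrarily close to $\ol m$ at some finite location.
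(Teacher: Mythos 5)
Your proposal is correct and follows essentially the route the paper intends for this corollary: repeat the construction of the quasiconcave majorant $H_2^+$ from the proof of Corollary \ref{cor:rep3}, apply Theorem \ref{thm:random} to $H^+=\max\{H,H_2^+\}$, show $\ol{H}_2^{+}\leq \max_{\R^n}H_2^{+}-\ol{m}<0$, and sandwich the discounted solutions, with Lemma 5.1 of \cite{AS3} reducing everything to convergence at the origin. The extra details you supply (ergodic attainment of values near ${\rm ess\,sup}\,V$ and ${\rm ess\,inf}\,V$ at a.s.\ finite points, plus the uniform Lipschitz transfer to $x=0$) are exactly the random-setting ingredients the paper leaves implicit, so there is no substantive deviation.
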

It is worth emphasizing that
we do not require any structure of $H$ in $U$ except that $H>0$ there.
In particular, $H$ is allowed to have strict saddle points in $U$.
Therefore, Corollary \ref{cor:random} implies that, in some specific cases,
even if $H$ has strict saddle points, $H-V$ is still regularly homogenizable provided that 
the oscillation of $V$ is large enough.
In  a way, this is a situation when the potential energy $V$ has much power to overcome 
the depths of all the wells created by the kinetic energy $H$ and it ``irons out" all the nonconvex pieces
to make $\ol{H}$ quasiconvex.
This also confirms that the counterexamples in \cite{Zi,Fe-Sou} are only for the case that 
$V$ has small oscillation, in which case $V$ only sees the local structure of $H$ around its strict saddle points, 
but not its global structure.

Let us now state the most general result in this stochastic homogenization context that we have.

\begin{thm}\label{thm:random-m}  Assume that {\rm (H6)} holds for some $m\in \N$. 
Assume further  that  $ {\rm ess}\inf_{\Omega}V(0,\omega)=0$, $ {\rm ess}\sup_{\Omega}V(0,\omega)=\ol{m}$.
Then $\varphi(|p|) - V(x,\om)$ and $k_{m-1}(|p|)- V(x,\om)$ are regularly homogenizable. Moreover,  (\ref{mainfor-1}) and (\ref{mainfor-2}) hold in this random setting as well
\begin{equation*}
 \ol{H}_m=\max\left\{\ol{K}_{m-1}, \  \ol{\Phi}_{2m},\ \varphi(s_{2m})\right\},
\end{equation*}
and
\begin{equation*}
\ol{K}_{m-1}=\min\left\{\ol{H}_{m-1}, \  \ol{\Phi}_{2m-1},\ \varphi(s_{2m-1})-\ol{m}\right\}.
\end{equation*}
In particular,  $ \ol{H}_m$ and $ \ol{K}_{m-1}$ are both even.  Here we use same notations as in Theorem \ref{Maintheorem}. 
\end{thm}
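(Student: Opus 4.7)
The plan is to prove Theorem \ref{thm:random-m} by induction on $m$, tracking the proof of Theorem \ref{Maintheorem} step by step but with every cell problem in $\T^n$ replaced by its discounted ergodic counterpart
\[
\lam v_\lam + \tilde H(p + Dv_\lam) - V(\cdot,\om) = 0 \quad \text{in } \R^n,
\]
the identity $\tilde H(p+Dv) - V = \ol{\tilde H}(p)$ being replaced by the asymptotic statement $-\lam v_\lam(0,\om) \to \ol{\tilde H}(p)$ a.s.\ as $\lam \to 0$. The inputs to the induction are regular homogenization for each quasiconvex block $\varphi_{2i}(|p|)-V$ and each quasiconcave block $\varphi_{2i+1}(|p|)-V$. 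The quasiconvex case is covered by Armstrong--Souganidis \cite{AS3} (see also Davini--Siconolfi \cite{DaSi}). For the quasiconcave case, the substitution $u \mapsto -u$ and $V \mapsto -V$ transforms the Hamilton--Jacobi equation into one with the coercive quasiconvex Hamiltonian $q \mapsto -\varphi_{2i+1}(|q|)$ and the stationary ergodic potential $-V$, so \cite{AS3} applies again; evenness of $\ol \Phi_{2i+1}$ follows from the resulting sup--min representation.

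For the base case $m=1$, fix $p \in \R^n$ and consider first the regime $\ol H_1(p) \geq \max\{\ol H_2(p),0\}$. Let $v_\lam(\cdot,-p,\om)$ solve the discounted problem for $H_1$ at $-p$, so that $-\lam v_\lam(0,-p,\om) \to \ol H_1(p)$ almost surely, and set $w_\lam = -v_\lam(\cdot,-p,\om)$. Exactly as in Step~2 of the proof of Theorem \ref{thm:rep1}, the Barles--Jakobsen characterization \cite{BJ} of Lipschitz subsolutions of quasiconvex Hamilton--Jacobi equations, the evenness of $H_1$, and the identity $H = H_1$ on $\R^n \setminus U$ combine to show that wherever the driving $\ol H_1(p) + V - \lam v_\lam$ is nonnegative (which is asymptotically the case under $\ol H_1(p) \geq 0$), $w_\lam$ is an almost-subsolution of the discounted equation for $H$. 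A perturbed test function argument then yields $\limsup_\lam(-\lam u_\lam^H(0,p,\om)) \leq \ol H_1(p)$, and the reverse inequality is automatic from $\ol H \geq \ol H_1$. The regime $\ol H_2(p) \geq \max\{\ol H_1(p),0\}$ is handled symmetrically. The residual regime $\max\{\ol H_1(p),\ol H_2(p)\}<0$ is treated by the $\sig$-homotopy of Step~4: after rescaling $V$ by $\sig \in [0,1]$, the map $\sig \mapsto \ol H_1^\sig(p)$ is continuous (since $\|v_\lam^\sig - v_\lam^{\sig'}\|_{L^\infty} \leq |\sig-\sig'|\,\|V\|_\infty/\lam$ yields the Lipschitz bound in the limit via standard stability), so the intermediate value theorem produces $s \in (0,1)$ with $\ol H_1^s(p) = 0$, forcing $\ol H(p) = \ol H^s(p) = 0$.

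With the base case in place, the inductive step $m \to m+1$ is a direct translation of the periodic induction in Theorem \ref{Maintheorem}. The italicized remark in the proof of Lemma \ref{thm:rep2}, that only the inequality $\ol \Phi_{2m+1} \geq \ol K_m$ is used and not quasiconvexity of $\ol \Phi_{2m+1}$, is precisely what makes the argument work, since inductively $\ol K_m$ is neither quasiconvex nor quasiconcave. Evenness of $\ol H_m$ and $\ol K_{m-1}$ is inherited from the min--max structure.

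I expect the main technical obstacle to be the rigorous implementation of the semi-differential manipulations for the Lipschitz discounted solutions $v_\lam$. This is best handled by inf- and sup-convolutions producing semiconcave/semiconvex approximants that are twice differentiable a.e., on which the pointwise algebraic argument is carried out, with the viscosity solution machinery recovering everything in the limit. A secondary technical point is the continuity in $\sig$ required for the patching step, which must be established uniformly in $\lam$ in order for the homogenized limit to inherit it.
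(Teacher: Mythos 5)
Your proposal is correct and follows essentially the paper's own route: the paper proves Theorem \ref{thm:random} in detail (the three regimes for the discounted problems $\lam v_\lam + H(p+Dv_\lam)-V=0$, with the comparison on balls $B(0,R/\lam)$ playing the role your ``perturbed test function'' step plays) and then declares the extension to Theorem \ref{thm:random-m} clear via exactly the induction on $m$ you describe, using \cite{AS3,DaSi} for the quasiconvex/quasiconcave blocks and the italicized remark in Lemma \ref{thm:rep2}. One small slip: the inequality that remark actually exploits is $\ol{H}_{m}\geq \ol{K}_{m}$ (the effective Hamiltonian of the non-quasiconvex inductive block dominates, coming from $H_m\geq K_m$), not $\ol{\Phi}_{2m+1}\geq \ol{K}_m$, but this does not affect your argument.
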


We also have the following conjecture which was proven to be true in one dimension \cite{ATY2}. 

\begin{conj}  \label{conj:ran}
Assume that $\varphi:[0, \infty)\to \R$ is continuous and coercive.  
 Set $H(p,x,\om)=\varphi(|p|)-V(x,\om)$ for $(p,x,\om) \in \R^n \times \R^n \times \Om$. Then $H$ is regularly homogenizable. 
\end{conj}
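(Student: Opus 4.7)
\medskip

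\noindent\textbf{Proof proposal for Conjecture \ref{conj:ran}.}
The plan is to prove regular homogenization by approximating a general continuous coercive $\varphi$ by functions to which Theorem \ref{thm:random-m} applies, and then passing to the limit in the discount problem. For each $\lambda>0$ and $p\in\R^n$, let $v_\lambda(\cdot,p,\omega)$ denote the unique bounded viscosity solution to
\[
\lambda v_\lambda + \varphi(|p+Dv_\lambda|) - V(x,\omega) = 0 \quad \text{in } \R^n,
\]
and let $v_\lambda^k$ be the analogous solution with $\varphi$ replaced by $\varphi_k$, where $\{\varphi_k\}$ is a sequence of continuous coercive functions satisfying {\rm (H6)} and such that $\varphi_k\to \varphi$ uniformly on every compact subset of $[0,\infty)$. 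By Theorem \ref{thm:random-m}, each $\varphi_k(|p|)-V(x,\omega)$ is regularly homogenizable with effective Hamiltonian $\ol{H}^k(p)$, and moreover an explicit min--max formula is available for $\ol{H}^k$.

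The second step is a stability estimate. By the coercivity of $\varphi$ and the uniform coercivity of $\{\varphi_k\}$ on large sets, together with the uniform bound on $V$, the functions $v_\lambda$ and $v_\lambda^k$ are globally Lipschitz with a constant $R=R(|p|)$ independent of $\lambda$ and of $k$ (for $k$ large). A standard comparison argument applied to the discount equations then yields
\[
\sup_{x\in\R^n}|\lambda v_\lambda(x,p,\omega)-\lambda v_\lambda^k(x,p,\omega)| \ \leq\ \sup_{|q|\leq |p|+R}|\varphi(|q|)-\varphi_k(|q|)|\ \xrightarrow[k\to\infty]{}\ 0.
\]
Combined with the regular homogenization of $\varphi_k-V$, this shows that the sequence $\{\ol{H}^k(p)\}$ is Cauchy, converges to some deterministic $\ol{H}(p)$, and that $-\lambda v_\lambda(0,p,\omega)\to \ol{H}(p)$ almost surely. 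Thus $\varphi(|p|)-V(x,\omega)$ is regularly homogenizable in the sense of Definition \ref{regularhomo}.

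The main obstacle is the approximation step: condition {\rm (H6)} is restrictive, requiring not only finitely many monotone pieces but also that the heights of local minima be strictly decreasing and the heights of local maxima be strictly increasing. A general continuous coercive $\varphi$ need not admit any uniform {\rm (H6)}-approximation, since a function with, say, three local minima at increasing heights cannot be approached in $L^\infty_{\rm loc}$ by functions whose local minima heights are monotone decreasing. To bypass this, I would first extend Theorem \ref{thm:random-m} to the strictly larger class {\rm (H7)} (finitely many monotone pieces but arbitrary heights of the extrema) and then perform the approximation inside {\rm (H7)}, which is straightforward by piecewise linear interpolation on a mesh that refines near every extremum of $\varphi$.

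Extending the decomposition to {\rm (H7)} is the hard part of the program. My strategy would be to run an induction on the number of local extrema, but organized by the \emph{height ordering} of the critical values rather than by their spatial ordering: at each stage one isolates a highest local maximum (or a lowest local minimum) and splits $\varphi$ into the maximum of a quasiconvex piece and a quasiconcave ``cap,'' exactly as in the proofs of Theorem \ref{thm:rep1} and Lemma \ref{thm:rep2}, using the patching/continuity argument of Step 4 there to glue the effective Hamiltonians on the level where the two pieces meet. The italicized observation in the proof of Lemma \ref{thm:rep2}, noting that the argument needs only $\ol{H}_3\geq \ol{H}$ rather than quasiconvexity of $\ol{H}_3$, will be crucial when the heights of extrema are not monotone, because the natural ``inner'' Hamiltonian at each inductive step will itself be a previously homogenized nonconvex object rather than a quasiconvex one. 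Once this inductive decomposition is carried out in the stationary ergodic setting (replacing cell problems by discount problems, as in Theorem \ref{thm:random-m}), the approximation and stability steps above complete the proof.
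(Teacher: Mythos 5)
You are proposing a proof of Conjecture \ref{conj:ran}, which the paper itself leaves open (it is known only for $n=1$, via \cite{ATY2}); there is no proof in the paper to compare against, so the only question is whether your program closes the gap. Your outer layer is fine: the uniform-in-$\lambda$ Lipschitz bound from coercivity, the comparison estimate $\sup_x|\lambda v_\lambda-\lambda v_\lambda^k|\leq \sup_{|q|\leq |p|+R}|\varphi(|q|)-\varphi_k(|q|)|$, and the resulting stability of regular homogenizability under locally uniform approximation of $\varphi$ are all standard and correct. You also correctly identify that (H6) is not dense in the class of coercive continuous functions, so everything hinges on your proposed extension of Theorem \ref{thm:random-m} (equivalently Theorem \ref{Maintheorem}) to the class (H7).

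That extension is where the proposal breaks, and not merely because it is unproven: in the form you describe it is false. A height-ordered inductive splitting into radially symmetric quasiconvex pieces and quasiconcave caps, glued by the Step 4 patching argument, would output a formula of the type \eqref{uni-decom} for $\ol H$ in terms of effective Hamiltonians of even quasiconvex/quasiconcave Hamiltonians together with $\min V$ and $\max V$, valid for every $V$; as explained in Subsection \ref{subsec:even}, any such formula forces $\ol H$ to be even whenever $H$ is even. But the paper's own example (the (H8)-type $\varphi$ with $m_1=\tfrac13$, $M_1=\tfrac12$ and the asymmetric potential $V_s$, $s\neq\tfrac12$) lies inside (H7) yet outside (H6) precisely because $\varphi(0)<\varphi(s_2)$, and for it $\ol H$ fails to be even, so no decomposition of the kind you want exists for that Hamiltonian. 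What survives in this regime is only a \emph{conditional} formula: \eqref{rep-min} when ${\rm osc}\,V<M_1-m_1$ (known only for $n=1$, and tied to Question \ref{quest3}), and quasi-convexification when ${\rm osc}\,V\geq M_1-m_1$, which is exactly Conjecture \ref{conj:convex} and is open for $n\geq 2$ (Theorem \ref{thm:conj} covers only the level sets above $m_1$). So your reduction replaces the open conjecture by a dichotomy of statements one of which contradicts the paper's loss-of-evenness examples and the other of which is itself the paper's main open conjecture; the italicized remark in Lemma \ref{thm:rep2} ($\ol H_3\geq \ol H$ suffices) does not rescue this, since the obstruction is not the quasiconvexity of the inner piece but the non-existence of any $V$-universal formula. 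The core difficulty therefore remains untouched.
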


We believe that  Conjecture \ref{conj:convex}  should play a significant role in proving the above conjecture as in the one dimensional case.  An initial step might be to obtain stochastic homogenization for  the specific $\varphi$ satisfying (H8). Below is closely related elementary question

\begin{quest}\label{quest3}
Let $w$ be a periodic semi-concave (or semi-convex) function.  Denote $\mathcal {D}$ as the collection of all regular gradients, that is, 
$$
\mathcal {D}=\{Dw(x)\,:\,  \text{$w$ is differentiable at $x$}\}.
$$
Is $\mathcal {D}$ a connected set?
\end{quest}
The periodic assumption is essential.  Otherwise, it is obviously  false, e.g.,  $w(x)=-(|x_1|+\cdots +|x_n|)$ for $x=(x_1,\ldots,x_n)\in \R^n$.   When $n=1$,  the connectedness of $\mathcal {D}$  follows easily from the periodicity and a simple mean value property (Lemma 2.6 in \cite{ATY2}).  

As for the double-well type Hamiltonian $H(p)=\min\{|p-e_1|,  \ |p+e_1|\}$ in the two dimensional space,  
the following question is closely related to Question \ref{quest2} and counterexamples in  \cite{Zi,Fe-Sou}.

\begin{quest}\label{quest4}
Assume that $n=2$ and $H(p)=\min\{|p-e_1|,  \ |p+e_1|\}$ for all $p\in \R^2$, where $e_1=(1,0)$.
Does there  exist $L>0$ such that, if 
$$
{\rm osc}_{\R^2\times \Om}V={\rm ess}\sup_{\Omega}V(0,\omega)- {\rm ess}\inf_{\Omega}V(0,\omega)>L,
$$
then $H-V$ is regularly homogenizable?
\end{quest}

\subsection{Proof of Theorem \ref{thm:random}}
As a demonstration,  we only provide the proof of Theorem \ref{thm:random} in details here. 
The extension to Theorem \ref{thm:random-m} is clear.  Compared with the proof for the special case  in \cite{ATY1},  the following  proof is much clearer and simpler.

We need the following comparison result.

\begin{lem}\label{lem:comparison}
Fix $\lam \in (0,1), R>0$.
Suppose that $u,v$ are respectively a viscosity subsolution and a viscosity supersolution to
\begin{equation}\label{eq-com}
\lam w + H(p+Dw) - V= 0 \quad \text{ in } B(0,R/\lam).
\end{equation}
Assume further that there exists $C>0$ such that
\[
\begin{cases} 
\lam(|u|+|v|) \leq C \quad &\text{ on } \ol{B}(0,R/\lam),\\
|H(p)-H(q)| \leq C|p-q| \quad &\text{ for all } p,q \in \R^n.
\end{cases}
\]
Then
\[
\lam (u-v) \leq \frac{C (|x|^2+1)^{1/2}}{R} + \frac{C^2}{R} \quad \text{ in } B(0,R/\lam).
\]
\end{lem}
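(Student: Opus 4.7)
The plan is to apply the doubling-of-variables technique in viscosity solution theory, localized to the bounded ball $B(0,R/\lambda)$ via a Lipschitz penalty of linear growth that forces the supremum into the open ball. This allows us to use the sub- and supersolution tests at an interior maximum and then pass to the limit to obtain a pointwise bound on $u-v$.

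Concretely, I would fix parameters $\varepsilon,\tau>0$ and $\alpha>0$ (to be calibrated of order $C/R$), and introduce the auxiliary function
\[
\Phi(x,y) \,=\, u(x) - v(y) - \frac{|x-y|^2}{2\varepsilon} - \alpha\,\phi(x), \qquad \phi(x) = (|x|^2+\tau^2)^{1/2},
\]
on $\overline{B}(0,R/\lambda)\times\overline{B}(0,R/\lambda)$. Let $(x_\varepsilon,y_\varepsilon)$ be a maximizer. By the standard estimates, $|x_\varepsilon-y_\varepsilon|^2/\varepsilon\to 0$ as $\varepsilon\to 0$. Since $\phi(x)\ge |x|$ and $|D\phi|\le 1$, if $\alpha$ is taken at least of order $C/R$ then on $\{|x|=R/\lambda\}$ the barrier $\alpha\phi(x)\ge \alpha R/\lambda$ dominates the a priori oscillation $u(x)-v(y)\le 2C/\lambda$, forcing $x_\varepsilon$ into the open ball for every small $\varepsilon$. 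With the maximum interior, the viscosity subsolution test for $u$ at $x_\varepsilon$ and the supersolution test for $v$ at $y_\varepsilon$, combined with the $C$-Lipschitz bound on $H$, give upon subtraction
\[
\lambda\bigl(u(x_\varepsilon)-v(y_\varepsilon)\bigr) \,\le\, C\alpha\,|D\phi(x_\varepsilon)| + V(x_\varepsilon)-V(y_\varepsilon) \,\le\, C\alpha + V(x_\varepsilon)-V(y_\varepsilon).
\]

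Passing to the limit $\varepsilon\to 0$ (along a subsequence), $x_\varepsilon,y_\varepsilon\to x^*$, $V(x_\varepsilon)-V(y_\varepsilon)\to 0$, and $x^*$ is a maximizer of $u-v-\alpha\phi$ on $\overline{B}(0,R/\lambda)$. Hence $\lambda(u(x^*)-v(x^*))\le C\alpha$, and for every $x\in B(0,R/\lambda)$,
\[
u(x)-v(x) \,\le\, \bigl(u(x^*)-v(x^*)\bigr) - \alpha\phi(x^*) + \alpha\phi(x) \,\le\, \frac{C\alpha}{\lambda} + \alpha\phi(x).
\]
Multiplying by $\lambda$, sending $\tau\to 0$, and selecting $\alpha$ of the form $c\,C/R$ just above the threshold that ensures the interior maximum, one arrives at
\[
\lambda\bigl(u(x)-v(x)\bigr) \,\le\, \frac{c\,C^2}{R} + \frac{c\,C\,\lambda(|x|^2+1)^{1/2}}{R} \,\le\, \frac{c\,C^2}{R} + \frac{c\,C\,(|x|^2+1)^{1/2}}{R},
\]
using $\lambda<1$ in the last inequality; the universal constant $c$ can be absorbed into $C$ to match the stated form.

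The main technical obstacle is the boundary barrier argument that pins down the correct scaling $\alpha\sim C/R$: $\alpha$ must be large enough for $\alpha\phi$ to dominate the oscillation of $u-v$ on $\partial B(0,R/\lambda)$, yet small enough that both $C\alpha$ and $\lambda\alpha(|x|^2+1)^{1/2}$ fit within the stated right-hand side. This balance is exactly what distinguishes the localized comparison on a ball from the usual global comparison principle for \eqref{eq-com}, and is the mechanism by which the estimate deteriorates as $R\to 0$. Everything else is a routine application of the Crandall--Ishii--Lions doubling argument.
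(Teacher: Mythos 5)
Your doubling-of-variables route is viable in outline, and the core computation (the two viscosity tests at $(x_\varepsilon,y_\varepsilon)$, the bound $\lambda(u(x_\varepsilon)-v(y_\varepsilon))\le C\alpha+V(x_\varepsilon)-V(y_\varepsilon)$, the passage to a limit maximizer $x^*$, and the propagation to general $x$) is sound modulo standard details. The genuine gap is in the last step: the claim that ``the universal constant $c$ can be absorbed into $C$'' is not valid. The constant $C$ is fixed by the hypotheses ($\lambda(|u|+|v|)\le C$ and the Lipschitz bound on $H$), and the conclusion is stated with that same $C$. Your boundary-exclusion argument requires comparing $\sup_{\partial B\times \overline B(0,R/\lambda)}\Phi\le 2C/\lambda-\alpha R/\lambda$ with the value of $\Phi$ at an interior anchor point, which can be as low as about $-C/\lambda$; this forces $\alpha\gtrsim 3C/R$, so what you actually prove is $\lambda(u-v)\le c\,C(|x|^2+1)^{1/2}/R+c\,C^2/R$ with a universal $c>1$, strictly weaker than the statement (and replacing $C$ by $cC$ in the hypotheses does not help, since the right-hand side would then involve $(cC)^2$). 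The weaker bound would still suffice for the lemma's only use, in the proof of Theorem \ref{thm:random}, but it is not the stated estimate. A secondary, fixable omission: you must also ensure $y_\varepsilon$ lies in the open ball to test the supersolution; this follows because the subsequential limit of $(x_\varepsilon,y_\varepsilon)$ is a diagonal maximizer of $u-v-\alpha\phi$, which your barrier makes interior, but it should be said. If you insist on doubling with exact constants, take $\alpha=C/R$, observe $u-v-\alpha\phi\le 0$ on $\partial B(0,R/\lambda)$, and argue by dichotomy: either its maximum is nonpositive (then the conclusion is immediate), or it is positive, in which case all maximizers are interior and your argument runs, yielding exactly $\lambda(u-v)\le C(|x|^2+1)^{1/2}/R+C^2/R$.

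The paper's proof is much shorter and avoids the calibration issue entirely: set $\tilde v(x)=v(x)+C(|x|^2+1)^{1/2}/R+C^2/(R\lambda)$. Since the barrier's gradient has norm at most $C/R$, the Lipschitz bound shows the Hamiltonian decreases by at most $C^2/R$, which is exactly compensated by $\lambda$ times the added constant $C^2/(R\lambda)$; hence $\tilde v$ is still a supersolution of \eqref{eq-com}. On $\partial B(0,R/\lambda)$ the barrier is at least $C|x|/R=C/\lambda\ge u-v$, so $\tilde v\ge u$ there, and the standard comparison principle on the bounded ball (strict monotonicity from the $\lambda w$ term) gives $u\le\tilde v$, which is the stated inequality with the exact constants after multiplying by $\lambda<1$. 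In effect, your free penalty $\alpha\phi$ plays the role of the paper's explicit barrier, but by optimizing $\alpha$ against an interior anchor instead of building the supersolution $\tilde v$ and invoking comparison, you pay an unnecessary constant factor and redo the comparison principle by hand.
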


\begin{proof}
Let 
\[
\tilde v(x) = v(x) +  \frac{C (|x|^2+1)^{1/2}}{R} + \frac{C^2}{R \lam} \quad \text{ for $x\in B(0,R/\lam)$.}
\] 
Then, $\tilde v$ is still a viscosity supersolution to \eqref{eq-com} and furthermore, $\tilde v \geq u$ on $\partial B(0,R/\lam)$.
Hence, the comparison principle yields $\tilde v \geq u$ in $B(0,R/\lam)$.
\end{proof}

\begin{proof}[Proof of Theorem \ref{thm:random}]
Fix $p\in \R^n$.
For $\lam>0$, let $v_\lam(y,p)$ be the unique bounded continuous viscosity solution to
\begin{equation}\label{v-lam}
\lam v_\lam + H(p+Dv_\lam) - V(y) =0 \quad \text{ in } \R^n.
\end{equation}
In order to prove Theorem \ref{thm:random}, it is enough to show that
\begin{equation}\label{random-goal}
\bP \left[ \lim_{\lam\to 0} \left|\lam v_\lam(0,p) + \ol{H}(p)\right|=0\right]=1.
\end{equation}
Let us note first that, as $H_1$ is quasiconvex and $H_2$ is quasiconcave,
$H_1-V$ and $H_2-V$ are regularly homogenizable (see \cite{DaSi, AS3}).
It is clear that
\begin{equation}\label{random-0}
\max\left\{\ol{H}_1, \ol{H}_2 \right\} \leq H.
\end{equation}
Once again, we divide our proof into few steps.
\smallskip

\noindent {\bf Step 1.} Assume that $\ol{H}_1(p) \geq \max\left\{\ol{H}_2(p),0\right\}$. 
We proceed to show that
\begin{equation}\label{random-1-1}
\bP \left[ \lim_{\lam\to 0} \left|\lam v_\lam(0,p) + \ol{H}_1(p)\right|=0\right]=1.
\end{equation}
Since $H \geq H_1$, by the usual comparison principle, it is clear that
\begin{equation}\label{random-1-2}
\bP \left[ \liminf_{\lam\to 0} -\lam v_\lam(0,p) \geq \ol{H}_1(p)\right]=1.
\end{equation}
It suffices to show that
\begin{equation}\label{random-1-3}
\bP \left[ \limsup_{\lam\to 0} -\lam v_\lam(0,p) \leq \ol{H}_1(p)\right]=1.
\end{equation}
Let $v_{1\lam}(y,-p)$ be the viscosity solution to
\begin{equation}\label{random-1-4}
\lam v_{1\lam} + H_1(-p+Dv_{1\lam}) - V =0 \quad \text{ in } \R^n.
\end{equation}
Since $H_1-V$ is regularly homogenizable, we get that, for any $R>0$,
\begin{equation}\label{random-1-5}
\bP \left[ \lim_{\lam\to 0} \max_{y \in B(0,R/\lam)} \left|\lam v_{1\lam}(y,-p) + \ol{H}_1(-p)\right|=0\right]=1.
\end{equation}
Fix $R>0$. Pick $\om \in \Om$ such that \eqref{random-1-5} holds.
For each $\ep>0$ sufficiently small, there exists $\lam(R,\om,\ep)>0$ such that, for $\lam <\lam(R,\om,\ep)$,
\[
 \max_{y \in B(0,R/\lam)} \left|\lam v_{1\lam}(y,-p,\om) + \ol{H}_1(-p)\right| \leq \ep.
\]
Note that by inf-sup representation formula and the even property of $H_1$, we also have that $\ol{H}_1$ is also even, i.e., $\ol{H}_1(-p) = \ol{H}_1(p)$.
In particular,
\[
-\lam v_{1\lam}(y,-p,\om) \geq \ol{H}_1(p) - \ep \geq -\ep \quad \text{ for } y \in B(0,R/\lam).
\]
Due to the quasiconvexity of $H_1$,  this implies that, for $q \in D^- v_{1\lam}(y,-p,\om)$ for some $y \in B(0,R/\lam)$, 
\begin{equation}\label{random-1-6}
H_1(-p+q) = -\lam v_{1\lam}(y,-p,\om) +V(y) \geq -\ep.
\end{equation}
In particular, $|H_1(-p+q)-H(-p+q)|\leq \del_\ep$, where $\del_\ep = \max\{H(p)- H_1(p)\,:\, H_1(p) \geq -\ep\}$.

Denote by $w=-v_{1\lam}(y,-p,\om)  - \frac{2 \ol{H}_1(p)}{\lam}$. Then $w$ is a viscosity subsolution to
\[
\lam w + H(p+Dw) - V = 2\ep + \del_\ep \quad \text{ in } B(0,R/\lam).
\]
Hence $w - \frac{2\ep+\del_\ep}{\lam}$ is a subsolution to \eqref{eq-com}.
By Lemma \ref{lem:comparison}, we get
\[
\lam w(0) - \lam v_\lam(0,p,\om) \leq \frac{C}{R} +2 \ep + \del_\ep.
\]
Hence, \eqref{random-1-3} holds.
Compare this to Step 2 in the proof of Theorem \ref{thm:rep1} for similarity.
\smallskip

\noindent {\bf Step 2.} Assume that $\ol{H}_2(p) \geq \max\left\{\ol{H}_1(p),0\right\}$.
We proceed in the same way as in Step 1 (except that we use $v_{2\lam}(y,p)$ instead of $v_{2\lam}(y,-p)$
because of the quasiconcavity of $H_2$) to get that
\begin{equation}\label{random-2-1}
\bP \left[ \lim_{\lam\to 0} \left|\lam v_\lam(0,p) + \ol{H}_2(p)\right|=0\right]=1.
\end{equation}
Compare this to Step 3 in the proof of Theorem \ref{thm:rep1} for similarity.
\smallskip

\noindent {\bf Step 3.} We now consider the case $\max\left\{\ol{H}_1(p), \ol{H}_2(p) \right\} <0$.
Our goal is to show 
\begin{equation}\label{random-3-1}
\bP \left[ \lim_{\lam\to 0} \left|\lam v_\lam(0,p)\right|=0\right]=1.
\end{equation}
This step basically shares the same philosophy as Step 4 in the proof of Theorem \ref{thm:rep1}.
Let us still present a proof here.

Thanks to the assumption that ${\rm ess}\inf_{\Om} V(0,\om)=0$ and the fact that $H \geq 0$, 
\begin{equation}\label{random-3-2}
\bP \left[ \liminf_{\lam\to 0} -\lam v_\lam(0,p) \geq 0\right]=1.
\end{equation}
We therefore only need to show
\begin{equation}\label{random-3-2}
\bP \left[ \limsup_{\lam\to 0} -\lam v_\lam(0,p) \leq 0\right]=1.
\end{equation}

For $\sig \in [0,1]$, $H_i - \sig V$ are still regularly homogenizable for $i=1,2$.
Let $\ol{H}_i^\sig$ be the effective Hamiltonian corresponding to $H_i-\sig V$.
By repeating Steps 1 and 2 above, we get that:
\begin{equation}\label{random-3-3}
\begin{cases}
\text{For $p \in \R^n$ and $\sig \in [0,1]$, if $\max\left\{ \ol{H}_1^\sig(p),\ol{H}_2^\sig(p) \right\}=0$,}\\
\text{then $H-\sig V$ is regularly homogenizable at $p$ and $\ol{H}^\sig(p)=0$},
\end{cases}
\end{equation}
where $\ol{H}^\sig(p)$ is its corresponding effective Hamiltonian.
Take $p\in \R^n$ so that 
\begin{equation}\label{random-3-4}
\max\left\{ \ol{H}_1^\sig(p),\ol{H}_2^\sig(p) \right\}=0.
\end{equation}
for some $\sigma\in [0,1]$.   For $\lam>0$, let $v_\lam^\sig$ be the viscosity solution to
\[
\lam v_\lam^\sig + H(p+Dv_\lam^\sig) - \sig V =0 \quad \text{ in } \R^n,
\]
then by  \eqref{random-3-3}, 
\[
\bP \left[ \lim_{\lam\to 0} \left|\lam v_\lam^\sig(0,p)\right|=0\right]=1.
\]
Since $V\geq 0$, the usual comparison principle gives us that $v_\lam^\sig \leq v_\lam$. Hence, \eqref{random-3-2} holds true.

It remains to show that, if $\max\left\{\ol{H}_1(p), \ol{H}_2(p) \right\} <0$, then \eqref{random-3-4} holds for some $\sig \in [0,1]$.
As $\ol{H}^0(p)= H(p)=0$ for $p \in \partial U$, we only need to consider the case $p \notin \partial U$.
There are two cases, either $p \in U$ or $p\in \R^n \setminus \ol{U}$.
Again, it is enough to consider the case that $p \in U$.
For this $p$, we have that
\[
\ol{H}_2^0(p) = H_2(p)>0 \quad \text{and} \quad \ol{H}_2^1(p)=\ol{H}_2(p) <0.
\]
By the continuity of $\sig \mapsto \ol{H}_2^\sig(p)$, there exists $\sig \in [0,1]$ such that $\ol{H}_2^\sig(p)=0$.
This, together with the fact that $\ol{H}_1^\sig(p)\leq  H_1(p)<0$,  leads to  \eqref{random-3-4}.
\end{proof}

\bibliographystyle{plain}

\end{document}